\documentclass[10pt,a4paper]{article}
\usepackage[utf8]{inputenc}
\usepackage{tikz}
\usepackage[width=14.00cm, height=25.00cm]{geometry}
\usepackage[T1]{fontenc}
\usepackage{amsmath,amssymb,amsfonts,amsthm,amscd, bm, mathtools}
\usepackage{enumerate}
\usepackage{todonotes}
\usepackage{tikz}
\usepackage{color}
\usepackage{hyperref}
\usetikzlibrary{positioning}
\usetikzlibrary{decorations.pathmorphing}
\usetikzlibrary{decorations.text}
\newtheorem{thm}{Theorem}[section]
\newtheorem*{thm5}{Theorem \ref{thm5}}
\newtheorem*{thm2}{Theorem \ref{thm2}}
\newtheorem*{thm3}{Theorem \ref{thm3}}
\newtheorem*{thm4}{Theorem \ref{thm4}}
\newtheorem*{cor1}{Corollary \ref{cor1}}
\newtheorem*{thm6}{Corollary \ref{thm6}}
\newtheorem{prop}[thm]{Proposition}
\newtheorem{lemma}[thm]{Lemma}
\newtheorem{cor}[thm]{Corollary}
\newtheorem{defn}[thm]{Definition}

\newtheorem{conj}[thm]{Conjecture}

\newtheorem{claim}{Claim}

\title{On some perfect matching conjectures in infinite, cubic, bridgeless graphs}
\author{Paulo Magalhães Júnior and Antonio Kelson Silva}
\date{}

\newcommand{\Addresses}{{
  \bigskip
  \footnotesize
    P.~Magalhães Jr, \textsc{Instituto Federal do Rio Grande do Norte\\
	Rua Manoel Lopes Filho, 773, Currais Novos - RN, 59380-000, Brazil}\par\nopagebreak
  \textit{E-mail address}, P.~Magalhães Jr: \texttt{paulo.magalhaes@ifrn.edu.br}
  
  \medskip
  A.~Silva, \textsc{Departamento de Matemática, Universidade Federal do Piauí\\
  Rua Dirce Oliveira, 1521-1655 - Ininga, Teresina - PI, 64048-550, Brazil}\par\nopagebreak
  \textit{E-mail address}, A.~Silva: \texttt{kelson.mat@ufpi.edu.br}
 
}}
\begin{document}

\maketitle

\begin{abstract}
The Berge-Fulkerson Conjecture states that every bridgeless cubic graph has six perfect matchings such that each edge belongs to exactly two of them. This conjecture has remained open since 1971, and several of its weakenings have been proposed over the years. Two of the most prominent are the Fan-Raspaud Conjecture and the M\'a\v{c}ajov\'a-\v{S}koviera Conjecture. It is well known that the Berge-Fulkerson Conjecture implies the Fan-Raspaud Conjecture, which in turn implies the M\'a\v{c}ajov\'a-\v{S}koviera Conjecture. These problems have been studied for years in the context of finite graphs, and many equivalences between them and other results have been established. However, little to nothing is known about them in the context of infinite graphs. In this paper, we investigate whether these conjectures remain valid in the infinite setting, establish their implications in analogy to the finite case, and prove that their finite versions are equivalent to their respective infinite versions.
\end{abstract}

\section{Introduction}
\paragraph{}

Perfect matchings in bridgeless cubic graphs have been extensively studied for decades within graph theory. One of the most important conjectures on this topic, which has been an object of study for more than half a century, is the Berge-Fulkerson Conjecture, introduced in \cite{Fulkerson}.

\begin{conj}[Berge-Fulkerson \cite{Fulkerson}] \label{conjecture1}
Every bridgeless cubic graph $G$ has six perfect matchings such that each edge of $G$ is covered by exactly two of them.    
\end{conj}

Over the years, numerous works have been published aimed at studying the Berge-Fulkerson Conjecture and its weakenings, such as \cite{GOEDGEBEUR2026104366, BergeFulkersoncoloringfor-linkedgraphs, BergesConjectureforCubicGraphsWithSmallColouringDefect, ReductionoftheBerge-Fulkersonconjecture, Anequivalentformulationofthe}. With the goal of developing strategies to settle the Berge-Fulkerson Conjecture, several of its weakenings have been proposed. However, little to nothing is known about the Berge-Fulkerson Conjecture or its weakenings in the context of infinite graphs. In this paper, we are interested in studying how this conjecture and its weakenings behave in infinite graphs and how they relate to their finite counterparts. One conjecture that stands out as one of the most prominent and well-known weakenings is the Fan-Raspaud Conjecture, introduced in \cite{Raspaud}.  

\begin{conj}[Fan-Raspaud \cite{Raspaud}]\label{conjecture2}
Every bridgeless cubic graph $G$ contains three perfect matchings $M_1 ,M_2, M_3$ such that no edge is covered by all of them.
\end{conj}

It is immediate to see that Conjecture \ref{conjecture1} implies Conjecture \ref{conjecture2}, regardless of the cardinality of the graph. Indeed, if $G$ is a bridgeless cubic graph containing a collection of 6 perfect matchings where each edge of $G$ belongs to exactly two of them, it suffices to select any subcollection consisting of 3 of these matchings to ensure that their intersection is empty. Although the Fan-Raspaud Conjecture is a weakening of the Berge-Fulkerson Conjecture, it still proves to be remarkably elusive; more than 30 years after its formulation, a proof has yet to be obtained. Consequently, new weakenings have been introduced to bring forth new results and techniques within this theory. A well-known example is the M\'a\v{c}ajov\'a and \v{S}koviera Conjecture, presented in \cite{MACAJOVA2005112}.

\begin{conj}[M\'a\v{c}ajov\'a and \v{S}koviera \cite{MACAJOVA2005112}]\label{conjecture3}
Every bridgeless cubic graph has two perfect matchings $M_1, M_2$ so that $M_1\cap M_2 $ does not contain an odd edge-cut.
\end{conj}

It was shown in \cite{perfectmatchingswithrestricted} and \cite{MACAJOVA2005112} that the Fan-Raspaud Conjecture implies the M\'a\v{c}ajov\'a and \v{S}koviera Conjecture for finite graphs. However, classical parity arguments on joins fail when a cut separates two infinite components; furthermore, approaches based on network flow theory face difficulties due to flows escaping to infinity or failing to behave well. Thus, the validity of this implication in the infinite case is not a direct consequence of the finite arguments. To address this, we propose an alternative approach via our Theorem~\ref{thm4} and Corollary~\ref{cor1}. Although the M\'a\v{c}ajov\'a and \v{S}koviera Conjecture is a weakening of Conjectures \ref{conjecture1} and \ref{conjecture2}, it nevertheless remains wide open. Accordingly, Mazzuoccolo introduced in \cite{Newconjecturesonperfectmatchingsincubicgraphs} the following new weakening of the aforementioned conjectures.

\begin{conj}[Mazzuoccolo \cite{Newconjecturesonperfectmatchingsincubicgraphs}]\label{conjecture4}
Every bridgeless cubic graph has two perfect matchings $M_1, M_2$ so that $G \setminus (M_1\cup M_2)$ is bipartite.
\end{conj}

Recently, Kardo\v{s}, M\'a\v{c}ajov\'a, and Zerafa proved in \cite{Disjointoddcircuitsinabridgeless} that Conjecture \ref{conjecture4} is true. In this paper, we establish the connections between the finite and infinite versions of all the presented conjectures. Our results complete the diagram illustrated in Figure~\ref{fig5}. To achieve this, we employ the concept of $F$-limit, which was introduced by Aurichi, Magalhães Júnior, and Seixas in \cite{F-limit}. Intuitively, an $F$-limit $H$ on a graph $G$ is a subgraph such that there exists a sequence of subgraphs $\langle H_n\rangle_{n\in\mathbb{N}}$ of $G$ where a vertex or edge of $G$ is in $H$ if there are many $n\in\mathbb{N}$ to which the vertex or edge belongs to $H_n$. As usual, we will use an ultrafilter to be precise about what we mean by many. This concept will be formally detailed in Section \ref{section2}. Although it has been only recently formalized, it already finds solid applications in several decomposition and covering problems in infinite graphs, such as the Cycle Faithful Covering Conjecture \cite{F-limit}, Nash-Williams' theorem for edge-connectivity preserving orientations (which remains a conjecture in the infinite case) \cite{aurichi2025orientationspreservingedgeconnectivityinfinite}, and Salia's Conjecture \cite{saliainfinite}.

\begin{figure}[ht]
    \centering

\tikzset{every picture/.style={line width=0.75pt}} 

\begin{tikzpicture}[x=0.65pt,y=0.65pt,yscale=-1,xscale=1]

\draw    (158.33,40.33) -- (158.33,73.33) -- (158.33,87.33) ;
\draw [shift={(158.33,90.33)}, rotate = 270] [fill={rgb, 255:red, 0; green, 0; blue, 0 }  ][line width=0.08]  [draw opacity=0] (8.93,-4.29) -- (0,0) -- (8.93,4.29) -- cycle    ;
\draw    (157.67,114.67) -- (157.67,161.67) ;
\draw [shift={(157.67,164.67)}, rotate = 270] [fill={rgb, 255:red, 0; green, 0; blue, 0 }  ][line width=0.08]  [draw opacity=0] (8.93,-4.29) -- (0,0) -- (8.93,4.29) -- cycle    ;
\draw    (499.4,40.07) -- (499.4,73.07) -- (499.4,87.07) ;
\draw [shift={(499.4,90.07)}, rotate = 270] [fill={rgb, 255:red, 0; green, 0; blue, 0 }  ][line width=0.08]  [draw opacity=0] (8.93,-4.29) -- (0,0) -- (8.93,4.29) -- cycle    ;
\draw    (500.07,114.4) -- (500.07,161.4) ;
\draw [shift={(500.07,164.4)}, rotate = 270] [fill={rgb, 255:red, 0; green, 0; blue, 0 }  ][line width=0.08]  [draw opacity=0] (8.93,-4.29) -- (0,0) -- (8.93,4.29) -- cycle    ;
\draw [color={rgb, 255:red, 208; green, 2; blue, 27 }  ,draw opacity=1 ]   (268,32.01) -- (387.82,32.59) ;
\draw [shift={(390.82,32.6)}, rotate = 180.27] [fill={rgb, 255:red, 208; green, 2; blue, 27 }  ,fill opacity=1 ][line width=0.08]  [draw opacity=0] (8.93,-4.29) -- (0,0) -- (8.93,4.29) -- cycle    ;
\draw [shift={(265,32)}, rotate = 0.27] [fill={rgb, 255:red, 208; green, 2; blue, 27 }  ,fill opacity=1 ][line width=0.08]  [draw opacity=0] (8.93,-4.29) -- (0,0) -- (8.93,4.29) -- cycle    ;
\draw [color={rgb, 255:red, 208; green, 2; blue, 27 }  ,draw opacity=1 ]   (268.5,101.61) -- (388.32,102.19) ;
\draw [shift={(391.32,102.2)}, rotate = 180.27] [fill={rgb, 255:red, 208; green, 2; blue, 27 }  ,fill opacity=1 ][line width=0.08]  [draw opacity=0] (8.93,-4.29) -- (0,0) -- (8.93,4.29) -- cycle    ;
\draw [shift={(265.5,101.6)}, rotate = 0.27] [fill={rgb, 255:red, 208; green, 2; blue, 27 }  ,fill opacity=1 ][line width=0.08]  [draw opacity=0] (8.93,-4.29) -- (0,0) -- (8.93,4.29) -- cycle    ;
\draw [color={rgb, 255:red, 208; green, 2; blue, 27 }  ,draw opacity=1 ]   (269.51,176.81) -- (389.33,177.39) ;
\draw [shift={(392.33,177.4)}, rotate = 180.27] [fill={rgb, 255:red, 208; green, 2; blue, 27 }  ,fill opacity=1 ][line width=0.08]  [draw opacity=0] (8.93,-4.29) -- (0,0) -- (8.93,4.29) -- cycle    ;
\draw [shift={(266.51,176.8)}, rotate = 0.27] [fill={rgb, 255:red, 208; green, 2; blue, 27 }  ,fill opacity=1 ][line width=0.08]  [draw opacity=0] (8.93,-4.29) -- (0,0) -- (8.93,4.29) -- cycle    ;

\draw (64,16.33) node [anchor=north west][inner sep=0.75pt]   [align=left] {Conjecture \ref{conjecture1} finite-version};
\draw (64,91.67) node [anchor=north west][inner sep=0.75pt]   [align=left] {Conjecture \ref{conjecture2} finite-version};
\draw (66,167) node [anchor=north west][inner sep=0.75pt]   [align=left] {Conjecture \ref{conjecture3} finite-version};
\draw (162.67,53.33) node [anchor=north west][inner sep=0.75pt]   [align=left] {};
\draw (402,16.33) node [anchor=north west][inner sep=0.75pt]   [align=left] {Conjecture \ref{conjecture1} infinite-version};
\draw (402,90.33) node [anchor=north west][inner sep=0.75pt]   [align=left] {Conjecture \ref{conjecture2} infinite-version};
\draw (400.67,167) node [anchor=north west][inner sep=0.75pt]   [align=left] {Conjecture \ref{conjecture3} infinite-version};
\draw (163.33,128.67) node [anchor=north west][inner sep=0.75pt]   [align=left] {\cite{perfectmatchingswithrestricted,MACAJOVA2005112}};
\draw (509.33,130.67) node [anchor=north west][inner sep=0.75pt]   [align=left] {Corollary \ref{cor1}};
\draw (508.67,52.67) node [anchor=north west][inner sep=0.75pt]   [align=left] {};
\draw (284.67,13) node [anchor=north west][inner sep=0.75pt]   [align=left] {\textcolor[rgb]{0.82,0.01,0.11}{Theorem \ref{thm2}}};
\draw (284.67,83) node [anchor=north west][inner sep=0.75pt]   [align=left] {\textcolor[rgb]{0.82,0.01,0.11}{Theorem \ref{thm3}}};
\draw (283.33,157) node [anchor=north west][inner sep=0.75pt]   [align=left] {\textcolor[rgb]{0.82,0.01,0.11}{Theorem \ref{thm4}}};

\end{tikzpicture}
    \caption{Diagram showing the implications among the conjectures.    }
    \label{fig5}
\end{figure}

Although the technique of $F$-limits is a compactness result, we will require approximation lemmas for cubic and bridgeless graphs that preserve both of these properties. This will enable us to apply compactness in a manner that guarantees the resulting limit of the matchings satisfies the desired properties. This development is carried out in Section \ref{section2}, which presents the approximation lemmas \ref{thm1}, \ref{lemma2}, and \ref{lemma3}. Lemma \ref{thm1}, in particular, will allow us to view cubic bridgeless graphs as $F$-limits of finite graphs sharing these same properties. However, the proof presented is not the most direct, simple, or shortest possible. This is due to the fact that the conclusion of Conjecture \ref{conjecture3} concerns odd cuts. To guarantee this property and avoid the parity issues that arise, the construction had to be carried out exactly as presented. Meanwhile, Lemma \ref{lemma3} is formulated within the framework of edge covers in order to generalize the properties required of perfect matchings in Conjectures \ref{conjecture1} and \ref{conjecture2}.

In Section \ref{section3}, we present the main results of the paper, which establish the connections between the finite and infinite versions of the conjectures. We begin with Theorem \ref{thm5}, which is stated in the language of covers to generalize the properties required by Conjectures \ref{conjecture1} and \ref{conjecture2}.

\begin{thm5}
    Let $G$ be an infinite, cubic, bridgeless, connected graph, and let $p,k \in \mathbb{N}_{>0}$ with $p \leq k$. If every finite, cubic, bridgeless, connected graph has a $(k,p)$-PM cover ($(k,p)^*$-PM cover), then $G$ also has a $(k,p)$-PM cover ($(k,p)^*$-PM cover).
\end{thm5}

As a direct application, we obtain Theorems \ref{thm2} and \ref{thm3}, which establish, respectively, that the finite version of the Berge-Fulkerson Conjecture is equivalent to its infinite version, and that the finite version of the Fan-Raspaud Conjecture is equivalent to its infinite version.

\begin{thm2}
    Conjecture~\ref{conjecture1} holds for finite graphs if and only if it holds for infinite graphs.
\end{thm2}

\begin{thm3}
    Conjecture~\ref{conjecture2} holds for finite graphs if and only if it holds for infinite graphs.
\end{thm3}

In the case of the M\'a\v{c}ajov\'a and \v{S}koviera Conjecture, the result that the finite case implies the infinite case does not follow from Theorem \ref{thm5}. This is due to the structural nature of the conjecture's requirement, namely, the odd cuts. Odd cuts become a significant obstacle here because an arbitrary approximation could end up obscuring the parities of the cuts during the finite processes, ultimately yielding an $F$-limit that fails to respect the odd cut condition. For this reason, the proof of Lemma \ref{thm1} introduces an approximation construction that is considerably more technical than what would be required to prove the statement alone. This is done precisely to achieve a construction that obeys the parities of the cuts throughout the finite approximation. In this way, we make a more complex application of compactness arguments, respecting the graph cut combinatorics, to obtain Theorem \ref{thm4}, which provides the finite-to-infinite equivalence for Conjecture \ref{conjecture3}.

\begin{thm4}
    If Conjecture~\ref{conjecture3} holds for finite graphs if and only if it holds for infinite graphs.
\end{thm4}

Note now that when a cut separates two infinite parts of a graph, it is impossible to use classical parity arguments over the cut. This is because both sides are infinite, making it impossible to apply vertex or edge counting arguments on either side. Furthermore, flow-based approaches may run into topological difficulties for infinite flows. Consequently, we cannot obtain in a direct and clean manner, by arguments analogous to those used in the finite case, that the infinite version of Conjecture \ref{conjecture2} implies the infinite version of Conjecture \ref{conjecture3}. To overcome these difficulties, we combine our established results---Theorem \ref{thm3} and Theorem \ref{thm4}---with the implication already verified for the finite case. Thus, we smoothly obtain Corollary \ref{cor1}.

\begin{cor1}
    If the Fan-Raspaud conjecture holds for infinite graphs, then so does the conjecture of M\'{a}\v{c}ajov\'{a} and \v{S}koviera.
\end{cor1}

We conclude the paper with Corollary \ref{thm6}, which extends the validity of Conjecture \ref{conjecture4} to infinite graphs by means of the constructions developed herein.

\begin{thm6}
    Every infinite, cubic, bridgeless graph $G$ has two perfect matchings $M_1$, $M_2$ so that $G \setminus (M_1\cup M_2)$ is bipartite.
\end{thm6}

\section{Approximations lemmas}\label{section2}
\paragraph{}

In this section, we present technical lemmas about finite approximations and the $F$-limits of these approximations, which will be applied to the results of the next section. The notation and terminology used follow reference~\cite{diestelbook}. Before the lemmas, we formally present the definition of the $F$-limit.

\begin{defn}
    We say that $F\subset \wp(\mathbb{N})$ is a \textbf{filter} over $\mathbb{N}$ if:
     \begin{enumerate}[(a)]
        \item $\emptyset \notin F$;
        \item If $a,b\in F$ then $a\cap b \in F$;
        \item If $a\in F$ and $b\supset a$ then $b\in F$;
      \end{enumerate}
    We say that $F$ is an \textbf{ultrafilter} if $F$ is maximal (that is, if $G\supset F$ is a filter then $G=F$). We also say that an ultrafilter $F$ is \textbf{non-principal} if it does not contain finite subsets of $\mathbb{N}$ (see e.g. \cite{Jechbook}). 
\end{defn}

We will use the following fact about ultrafilters.

\begin{prop}[see e.g. \cite{Jechbook} Chapter 7]\label{ultrafilter}
    Let $F$ be a filter on $\mathbb{N}$. The following statements are equivalent:
    \begin{enumerate}
        \item $F$ is an ultrafilter;
        \item If $A_0 \cup \dots \cup A_n \in F$ is a finite union, then there exists $i \in \{0, 1, \dots, n\}$ such that $A_i \in F$.
    \end{enumerate}
\end{prop}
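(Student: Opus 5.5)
We prove the equivalence of (1) and (2) directly from the filter axioms (a)--(c) and maximality. Throughout, we use the standard criterion: a filter $F$ is an ultrafilter if and only if for every $A\subseteq\mathbb{N}$ either $A\in F$ or $\mathbb{N}\setminus A\in F$. We first establish this criterion, then derive each direction from it.

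\textbf{The criterion.} Suppose $F$ is an ultrafilter, $A\notin F$, and $\mathbb{N}\setminus A\notin F$. We claim every $b\in F$ meets $A$. Otherwise $b\subseteq \mathbb{N}\setminus A$, so $\mathbb{N}\setminus A\in F$ by (c), a contradiction. Hence the family
\[
G=\{c\subseteq\mathbb{N} : c\supseteq b\cap A \text{ for some } b\in F\}
\]
does not contain $\emptyset$. It is closed under finite intersections because $F$ is, and it is upward closed by construction. So $G$ is a filter. It contains $F$, since $b\supseteq b\cap A$, and it also contains $A$, since $A\supseteq \mathbb{N}\cap A$ and $\mathbb{N}\in F$. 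Thus $G\supsetneq F$, contradicting maximality. Conversely, suppose the dichotomy holds and $G\supsetneq F$ is a filter. Pick $A\in G\setminus F$. By the dichotomy, $\mathbb{N}\setminus A\in F\subseteq G$, so $\emptyset=A\cap(\mathbb{N}\setminus A)\in G$, which is impossible. Hence $F$ is maximal.

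\textbf{(1)$\Rightarrow$(2).} Let $A_0\cup\dots\cup A_n\in F$ and suppose no $A_i$ lies in $F$. By the criterion, each $\mathbb{N}\setminus A_i\in F$. By (b), their intersection $\mathbb{N}\setminus(A_0\cup\dots\cup A_n)$ also lies in $F$. Intersecting it with $A_0\cup\dots\cup A_n\in F$ gives $\emptyset\in F$, which contradicts (a).

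\textbf{(2)$\Rightarrow$(1).} For any $A\subseteq\mathbb{N}$ we have $A\cup(\mathbb{N}\setminus A)=\mathbb{N}$. Since $F$ is nonempty and upward closed, $\mathbb{N}\in F$. Applying (2) with $n=1$ yields $A\in F$ or $\mathbb{N}\setminus A\in F$. By the criterion, $F$ is an ultrafilter.

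The only step requiring care is verifying that the extended family $G$ in the criterion is a proper filter. This reduces to showing that every $b\in F$ meets $A$, which was checked above. No choice principle is needed, because maximality is assumed rather than constructed.
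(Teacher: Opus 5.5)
Your argument is correct. The paper gives no proof of its own and simply cites Jech, Chapter 7. Your route is exactly the standard argument behind that citation: first show that maximality is equivalent to the dichotomy ``$A\in F$ or $\mathbb{N}\setminus A\in F$ for every $A$'', then obtain the finite-union property by intersecting complements. It therefore works as a self-contained proof.

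One point you should state explicitly rather than slip in. In (2)$\Rightarrow$(1) you use that $F$ is nonempty, hence $\mathbb{N}\in F$. The paper's axioms (a)--(c) do not require this.

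Under those axioms alone, the empty family is a filter. It satisfies (2) vacuously, yet it is not maximal, since it is properly contained in the filter $\{\mathbb{N}\}$. So the proposition genuinely needs the standard convention $\mathbb{N}\in F$, which Jech's definition includes, and you should say that you adopt it.

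In the forward direction of your criterion, the use of $\mathbb{N}\in F$ is harmless: a maximal filter cannot be empty, for the same reason.
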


\begin{defn}
    Let $F$ be a non-principal ultrafilter over $\mathbb{N}$, let $G$ be a graph and let $\langle H_n\rangle_{n\in\mathbb{N}}$ be a sequence of subgraphs of $G$. 
    We say that a subgraph $H\subset G$ is the \textbf{$F$-limit} of $\langle H_n\rangle_{n\in\mathbb{N}}$ if $H=(V_H,E_H)$, where
    $$V_H=\lbrace v\in V(G) : \lbrace n : v\in V(H_n) \rbrace \in F \rbrace ,$$
    $$E_H=\lbrace e\in E(G) : \lbrace n : e\in E(H_n) \rbrace \in F \rbrace.$$
    
    In this case, we say that the sequence $\langle H_n\rangle_{n\in\mathbb{N}}$ is \textbf{convergent} and \textbf{converges} to $H$.
    
\end{defn}

Given an infinite, cubic, and bridgeless graph $G$, we present the construction of a sequence of finite, cubic, and bridgeless graphs $\langle H_i\rangle_{i\in\mathbb{N}}$ such that $G$ is the $F$-limit of $\langle H_n\rangle_{n\in\mathbb{N}}$. The construction presented below is neither unique nor the simplest; however, it is designed to be applicable in the proofs of Theorems \ref{thm2}, \ref{thm3}, and \ref{thm4}.

\begin{lemma}\label{thm1}
    Let $G$ be an infinite, cubic, bridgeless connected graph. Then $G$ is the $F$-limit of a sequence of finite, bridgeless cubic graphs. 
\end{lemma}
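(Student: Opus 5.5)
The plan is to exhaust $G$ by finite balls and to close off each finite piece with small bridgeless gadgets, so that every finite graph is cubic and bridgeless while still agreeing with $G$ on a growing region. Since $G$ is connected and locally finite, it is countable. So fix a vertex $r$ and let $B_n$ be the set of vertices at distance at most $n$ from $r$. Each $B_n$ is finite, $B_n\subset B_{n+1}$, and $\bigcup_n B_n=V(G)$. The graphs $H_n$ cannot be subgraphs of $G$ literally, because $G$ has no finite cubic subgraph in general. I will therefore work inside an ambient graph: $G$ together with pairwise disjoint sets of \emph{fresh} gadget vertices and edges, one set for each $n$. The $F$-limit is computed there, exactly as in the definition.

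\textbf{Construction.} Let $\delta(B_n)$ be the finite set of edges between $B_n$ and $V(G)\setminus B_n$. Since $\delta(B_n)$ is finite, $G-B_n$ has only finitely many components $C_1,\dots,C_m$ that are joined to $B_n$. Let $d_j=|\delta(C_j)|$; we have $d_j\ge 2$ because $G$ is bridgeless. Start from $G[B_n]$, and for each $C_j$ replace $C_j$ by a finite gadget $Q_j$ that has $d_j$ designated attachment vertices of degree $2$ and all other vertices of degree $3$. For $d_j\ge 3$, take a cycle of length $d_j$. For $d_j=2$, take $K_4$ minus an edge, whose two degree-$2$ vertices are the attachment vertices. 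Each cut edge $uv\in\delta(C_j)$ with $u\in B_n$ is redirected to its own attachment vertex of $Q_j$. All gadget vertices and edges are new, and distinct for distinct $n$. The result $H_n$ is finite, and it is cubic: vertices of $B_n$ keep their three edges, and every gadget vertex has degree $3$. The graph stays simple because distinct cut edges go to distinct attachment vertices.

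\textbf{Bridgelessness.} This is the step needing the most care. Consider first an edge $e$ of $G[B_n]$ or a cut edge. In $G$ it lies on a cycle $Z$. Each maximal segment of $Z$ outside $B_n$ lies in a single component $C_j$, and it enters and leaves through two distinct edges of $\delta(C_j)$. In $H_n$, replace that segment by a path inside $Q_j$ between the two corresponding attachment vertices; such a path exists since $Q_j$ is connected. The segments use pairwise disjoint edge sets of $\delta(C_j)$, so distinct segments receive distinct attachment vertices. Each replacement path need only avoid the other endpoints on $Q_j$, and the resulting closed walk still traverses $e$ exactly once. Hence $e$ lies on a cycle of $H_n$. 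Edges inside a gadget lie on a cycle within the gadget: the cycle itself when $d_j\ge 3$, and when $d_j=2$ the graph $K_4$ minus an edge is $2$-edge-connected. So $H_n$ is bridgeless. I expect this replacement-of-segments argument to be the main obstacle, and the place to be careful is that the rerouted closed walk really contains a cycle through $e$. If that becomes awkward, I would instead argue edge-cut-wise: every edge cut of $H_n$ pulls back to an edge cut of $G$ of size at least $2$, since each gadget is $2$-edge-connected on its attachment set.

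\textbf{Limit.} Fix a non-principal ultrafilter $F$. A vertex $v\in V(G)$ lies in $B_n$ for all large $n$, hence in $V(H_n)$ for a cofinite set of $n$, and cofinite sets belong to $F$. An edge of $G$ has both ends in $B_n$ for large $n$, so it lies in $H_n$ for cofinitely many $n$. Every gadget vertex and edge belongs to exactly one $H_n$, a finite set, so it is not in $F$. Therefore the $F$-limit of $\langle H_n\rangle_{n\in\mathbb{N}}$ is exactly $G$, which proves Lemma \ref{thm1}.
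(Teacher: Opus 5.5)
Your proof is correct, but it takes a genuinely different route from the paper's. The paper uses an ear decomposition $G_0\subset G_1\subset\cdots$ of $G$ and passes to the stage $G_{m_i}$, in which every vertex of $G_i$ already has degree $3$. It then suppresses the maximal threads of degree-$2$ vertices to get a cubic multigraph $H_i'$, rules out triple edges (Claim~\ref{claim0}), and replaces each double edge by two subdivision vertices joined by a rung. In that approach bridgelessness is essentially inherited: each $G_n$ is $2$-edge-connected by the ear structure, and suppression and the rung gadget preserve this. The cost is the multigraph bookkeeping.

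You instead exhaust $G$ by balls and replace each component $C_j$ of $G-B_n$ by a gadget with exactly $|\delta(C_j)|$ attachment points, so you have to earn bridgelessness by rerouting. Your rerouting argument is sound. The worry about overlapping replacement paths is moot, though: any path inside $Q_j$ will do, because a closed walk that traverses $e$ exactly once already shows that $e$ is not a bridge. The parenthetical requirement that replacement paths avoid the other attachment vertices cannot always be met in a cycle gadget with interleaved endpoints, but you never need it.

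What your construction buys is a tighter relationship with $G$, which matters for how the lemma is used later.
\begin{itemize}
\item $H_n$ contains $G[B_n]$ together with every edge at $B_n$. So $N_{H_n}(v)=N_G(v)$ for $v\in B_{n-1}$, which is exactly the local agreement Lemma~\ref{lemma2} uses.
\item $H_n$ is visibly connected, which matters because Theorem~\ref{thm5} applies its hypothesis to connected finite graphs.
\item Every finite edge cut $L=E_G(A,B)$ with all endpoints in $B_n$ is an edge cut of $H_n$ with exactly the same edges. Each $C_j$ is connected and avoids $L$, so it lies on one side together with the $B_n$-ends of $\delta(C_j)$, and $Q_j$ can be placed on that side.
\end{itemize}
That short cut-preservation observation would replace Claims~\ref{claim1}--\ref{claim4} in the proof of Theorem~\ref{thm4}. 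You also make explicit the ambient graph in which the $F$-limit is taken, which the paper leaves implicit even though its $H_i$ are not subgraphs of $G$ either.
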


\begin{proof}
    Let $E(G)=\lbrace e_i\rbrace_{i\in\mathbb{N}}$. Consider $\langle G_i : i\in\mathbb{N}\rangle$ an ear decomposition for $G$ constructed as follows: let $G_0$ be a cycle containing the edge $e_0$. For each $n \in \mathbb{N}$, we define the subgraph $G_{n+1}$ as follows: if $e_{n+1}\notin E(G_{n})$, then $G_{n+1}=G_n\cup P_{n+1}$, where $P_{n+1}$ is an ear containing the edge $e_{n+1}$, as shown in Figure~\ref{fig1}; if $e_{n+1}\in E(G_n)$, then $G_{n+1}=G_{n}$. Note that each added ear is always open because $G$ is cubic. Also, note that each $G_n$ is bridgeless. Furthermore, $G_n\subset G_{n+1}$ and $G=\bigcup_{n\in\mathbb{N}}G_n$.

    \begin{figure}[ht]
        \centering

\tikzset{every picture/.style={line width=0.75pt}} 

\begin{tikzpicture}[x=0.6pt,y=0.6pt,yscale=-1,xscale=1]

\draw   (64,28) .. controls (84,18) and (127,9) .. (144,26) .. controls (161,43) and (178,97) .. (132,121) .. controls (86,145) and (60,136) .. (40,106) .. controls (20,76) and (44,38) .. (64,28) -- cycle ;
\draw [color={rgb, 255:red, 208; green, 2; blue, 27 }  ,draw opacity=1 ]   (219,30) -- (235,68) ;
\draw [shift={(235,68)}, rotate = 67.17] [color={rgb, 255:red, 208; green, 2; blue, 27 }  ,draw opacity=1 ][fill={rgb, 255:red, 208; green, 2; blue, 27 }  ,fill opacity=1 ][line width=0.75]      (0, 0) circle [x radius= 2.34, y radius= 2.34]   ;
\draw [shift={(219,30)}, rotate = 67.17] [color={rgb, 255:red, 208; green, 2; blue, 27 }  ,draw opacity=1 ][fill={rgb, 255:red, 208; green, 2; blue, 27 }  ,fill opacity=1 ][line width=0.75]      (0, 0) circle [x radius= 2.34, y radius= 2.34]   ;
\draw   (424,18) .. controls (444,8) and (487,-1) .. (504,16) .. controls (521,33) and (538,87) .. (492,111) .. controls (446,135) and (420,126) .. (400,96) .. controls (380,66) and (404,28) .. (424,18) -- cycle ;
\draw [color={rgb, 255:red, 208; green, 2; blue, 27 }  ,draw opacity=1 ]   (579,20) -- (595,58) ;
\draw [shift={(595,58)}, rotate = 67.17] [color={rgb, 255:red, 208; green, 2; blue, 27 }  ,draw opacity=1 ][fill={rgb, 255:red, 208; green, 2; blue, 27 }  ,fill opacity=1 ][line width=0.75]      (0, 0) circle [x radius= 2.34, y radius= 2.34]   ;
\draw [shift={(579,20)}, rotate = 67.17] [color={rgb, 255:red, 208; green, 2; blue, 27 }  ,draw opacity=1 ][fill={rgb, 255:red, 208; green, 2; blue, 27 }  ,fill opacity=1 ][line width=0.75]      (0, 0) circle [x radius= 2.34, y radius= 2.34]   ;
\draw    (509,97) -- (549,113) ;
\draw [shift={(549,113)}, rotate = 21.8] [color={rgb, 255:red, 0; green, 0; blue, 0 }  ][fill={rgb, 255:red, 0; green, 0; blue, 0 }  ][line width=0.75]      (0, 0) circle [x radius= 2.34, y radius= 2.34]   ;
\draw [shift={(509,97)}, rotate = 21.8] [color={rgb, 255:red, 0; green, 0; blue, 0 }  ][fill={rgb, 255:red, 0; green, 0; blue, 0 }  ][line width=0.75]      (0, 0) circle [x radius= 2.34, y radius= 2.34]   ;
\draw    (549,113) -- (584,97) ;
\draw [shift={(584,97)}, rotate = 335.43] [color={rgb, 255:red, 0; green, 0; blue, 0 }  ][fill={rgb, 255:red, 0; green, 0; blue, 0 }  ][line width=0.75]      (0, 0) circle [x radius= 2.34, y radius= 2.34]   ;
\draw [shift={(549,113)}, rotate = 335.43] [color={rgb, 255:red, 0; green, 0; blue, 0 }  ][fill={rgb, 255:red, 0; green, 0; blue, 0 }  ][line width=0.75]      (0, 0) circle [x radius= 2.34, y radius= 2.34]   ;
\draw    (584,97) -- (595,58) ;
\draw [shift={(595,58)}, rotate = 285.75] [color={rgb, 255:red, 0; green, 0; blue, 0 }  ][fill={rgb, 255:red, 0; green, 0; blue, 0 }  ][line width=0.75]      (0, 0) circle [x radius= 2.34, y radius= 2.34]   ;
\draw [shift={(584,97)}, rotate = 285.75] [color={rgb, 255:red, 0; green, 0; blue, 0 }  ][fill={rgb, 255:red, 0; green, 0; blue, 0 }  ][line width=0.75]      (0, 0) circle [x radius= 2.34, y radius= 2.34]   ;
\draw    (544,9) -- (579,20) ;
\draw [shift={(579,20)}, rotate = 17.45] [color={rgb, 255:red, 0; green, 0; blue, 0 }  ][fill={rgb, 255:red, 0; green, 0; blue, 0 }  ][line width=0.75]      (0, 0) circle [x radius= 2.34, y radius= 2.34]   ;
\draw [shift={(544,9)}, rotate = 17.45] [color={rgb, 255:red, 0; green, 0; blue, 0 }  ][fill={rgb, 255:red, 0; green, 0; blue, 0 }  ][line width=0.75]      (0, 0) circle [x radius= 2.34, y radius= 2.34]   ;
\draw    (511,24) -- (544,9) ;
\draw [shift={(544,9)}, rotate = 335.56] [color={rgb, 255:red, 0; green, 0; blue, 0 }  ][fill={rgb, 255:red, 0; green, 0; blue, 0 }  ][line width=0.75]      (0, 0) circle [x radius= 2.34, y radius= 2.34]   ;
\draw [shift={(511,24)}, rotate = 335.56] [color={rgb, 255:red, 0; green, 0; blue, 0 }  ][fill={rgb, 255:red, 0; green, 0; blue, 0 }  ][line width=0.75]      (0, 0) circle [x radius= 2.34, y radius= 2.34]   ;
\draw   (299,63.5) -- (329.6,63.5) -- (329.6,55) -- (350,72) -- (329.6,89) -- (329.6,80.5) -- (299,80.5) -- cycle ;

\draw (85,66.4) node [anchor=north west][inner sep=0.75pt]    {$G_{n}$};
\draw (233,34.4) node [anchor=north west][inner sep=0.75pt]    {$e_{n}{}_{+}{}_{1}$};
\draw (445,56.4) node [anchor=north west][inner sep=0.75pt]    {$G_{n}$};
\draw (593,24.4) node [anchor=north west][inner sep=0.75pt]    {$e_{n}{}_{+}{}_{1}$};
\draw (568.5,108.4) node [anchor=north west][inner sep=0.75pt]    {$P_{n}{}_{+}{}_{1}$};

\end{tikzpicture}
        \caption{Ear decomposition}
        \label{fig1}
    \end{figure}

Note that all vertices of each $G_n$ have degree 2 or 3, since $\langle G_n : n\in\mathbb{N}\rangle$ is an open ear decomposition. For each $i\in\mathbb{N}$, define $X_i=\lbrace v\in V(G_i) : d_{G_i}(v)=2\rbrace$. Let $m_i\in\mathbb{N}$ be the smallest natural number such that $d_{G_{m_i}}(v)=3$ for all $v\in X_i$. Let $\mathcal{P}_i$ be the set of paths $P\subset G_{m_i}$ that are maximal with respect to the following property: all internal vertices of $P$ have degree 2 in $G_{m_i}$.

Observe that if $P, Q\in \mathcal{P}_i$ and $P\neq Q$, then these two paths are internally disjoint. Thus, every vertex $v\in V(G_{m_i})$ with $d_{G_{m_i}}(v)=2$ lies on a unique path $P\in \mathcal{P}_i$. Consider the graph $H_i'$ obtained from $G_{m_i}$ by contracting each path $P\in \mathcal{P}_i$ into an edge, as shown in Figure~\ref{fig2}. In this way, each $Q\in \mathcal{P}_i$ is contracted into an edge $e_Q\in E(H_i')$, where the endpoints of $Q$ become the endpoints of $e_Q$. Note that $H_i'$ is cubic. Indeed, let $v\in V(H_i')$; then $v\in V(G_{m_i})$, since $V(H_i')\subset V(G_{m_i})$. Since $v\in V(H_i')$, then $d_{G_{m_i}}(v)=3$; otherwise, we would have $d_{G_{m_i}}(v)=2$, and there would exist $P\in \mathcal{P}_i$ such that $v$ is an internal vertex of $P$. Since $H_i'$ was obtained from $G_{m_i}$ by contracting the paths in $\mathcal{P}_i$, we would have $v\notin V(H_i')$, which is a contradiction. Therefore, $d_{G_{m_i}}(v)=3$, which implies $d_{H_i'}(v)=3$.

\begin{figure}[ht]
    \centering

\tikzset{every picture/.style={line width=0.75pt}} 


    \caption{Construction of the $G_{m_i}$ and $H_i'$}
    \label{fig2}
\end{figure}

Now, we will eliminate the parallel edges that may have arisen in $H_i'$ in the following way:

\begin{claim}\label{claim0}
For every $i\in\mathbb{N}$ and any $u,v\in V(H_i')$ there are at most two edges in $H_i'$ joining $u$ and $v$.
  
\end{claim}
\begin{proof}
Indeed, suppose there exist $u, v \in V(H_i')$ such that $e_1=uv, e_2=uv, e_3=uv \in E(H_i')$ with $e_1 \neq e_2 \neq e_3$. Since $H_i'$ is cubic, then $H_i'$ is formed precisely by the vertices $u, v$ and the edges $e_1, e_2, e_3$, as shown in Figure \ref{fig3} (a). Since $G_{m_i}$ has no parallel edges, at least two of the edges $e_1, e_2, e_3$ must have originated from contracted paths in $G_{m_i}$. Suppose, without loss of generality, that the edges $e_1$ and $e_2$ were contracted from paths denoted by $Q_{e_1}, Q_{e_2} \subset G_{m_i}$. Thus, $G_{m_i}$ consists of either the paths $Q_{e_1}$ and $Q_{e_2}$ and an edge $e_3$ between $u$ and $v$, as in Figure \ref{fig3} (b), or the paths $Q_{e_1}, Q_{e_2}$ and a path $Q_{e_3}$ between $u$ and $v$, as in Figure \ref{fig3} (c).

\begin{figure}[ht]
    \centering

\tikzset{every picture/.style={line width=0.75pt}} 

\begin{tikzpicture}[x=0.6pt,y=0.6pt,yscale=-1,xscale=1]

\draw    (64.33,107.67) -- (147.83,108.67) ;
\draw [shift={(147.83,108.67)}, rotate = 0.69] [color={rgb, 255:red, 0; green, 0; blue, 0 }  ][fill={rgb, 255:red, 0; green, 0; blue, 0 }  ][line width=0.75]      (0, 0) circle [x radius= 3.35, y radius= 3.35]   ;
\draw [shift={(64.33,107.67)}, rotate = 0.69] [color={rgb, 255:red, 0; green, 0; blue, 0 }  ][fill={rgb, 255:red, 0; green, 0; blue, 0 }  ][line width=0.75]      (0, 0) circle [x radius= 3.35, y radius= 3.35]   ;
\draw [color={rgb, 255:red, 208; green, 2; blue, 27 }  ,draw opacity=1 ]   (64.33,107.67) .. controls (79.83,63.67) and (142.83,75.67) .. (147.83,108.67) ;
\draw [color={rgb, 255:red, 74; green, 144; blue, 226 }  ,draw opacity=1 ]   (66.83,108.67) .. controls (78.33,150.67) and (144.33,141.17) .. (147.83,108.67) ;
\draw  [dash pattern={on 0.84pt off 2.51pt}]  (177.33,6.67) -- (177.33,197.67) ;
\draw    (206.33,109.17) -- (372.83,109.17) ;
\draw [shift={(372.83,109.17)}, rotate = 0] [color={rgb, 255:red, 0; green, 0; blue, 0 }  ][fill={rgb, 255:red, 0; green, 0; blue, 0 }  ][line width=0.75]      (0, 0) circle [x radius= 3.35, y radius= 3.35]   ;
\draw [shift={(206.33,109.17)}, rotate = 0] [color={rgb, 255:red, 0; green, 0; blue, 0 }  ][fill={rgb, 255:red, 0; green, 0; blue, 0 }  ][line width=0.75]      (0, 0) circle [x radius= 3.35, y radius= 3.35]   ;
\draw [color={rgb, 255:red, 74; green, 144; blue, 226 }  ,draw opacity=1 ]   (206.33,109.17) -- (233.33,133.67) ;
\draw [shift={(233.33,133.67)}, rotate = 42.22] [color={rgb, 255:red, 74; green, 144; blue, 226 }  ,draw opacity=1 ][fill={rgb, 255:red, 74; green, 144; blue, 226 }  ,fill opacity=1 ][line width=0.75]      (0, 0) circle [x radius= 3.35, y radius= 3.35]   ;
\draw [color={rgb, 255:red, 74; green, 144; blue, 226 }  ,draw opacity=1 ]   (233.33,133.67) -- (263.33,149.17) ;
\draw [shift={(263.33,149.17)}, rotate = 27.32] [color={rgb, 255:red, 74; green, 144; blue, 226 }  ,draw opacity=1 ][fill={rgb, 255:red, 74; green, 144; blue, 226 }  ,fill opacity=1 ][line width=0.75]      (0, 0) circle [x radius= 3.35, y radius= 3.35]   ;
\draw [color={rgb, 255:red, 74; green, 144; blue, 226 }  ,draw opacity=1 ]   (263.33,149.17) -- (299.83,149.67) ;
\draw [shift={(299.83,149.67)}, rotate = 0.78] [color={rgb, 255:red, 74; green, 144; blue, 226 }  ,draw opacity=1 ][fill={rgb, 255:red, 74; green, 144; blue, 226 }  ,fill opacity=1 ][line width=0.75]      (0, 0) circle [x radius= 3.35, y radius= 3.35]   ;
\draw [color={rgb, 255:red, 74; green, 144; blue, 226 }  ,draw opacity=1 ]   (299.83,149.67) -- (335.83,139.17) ;
\draw [shift={(335.83,139.17)}, rotate = 343.74] [color={rgb, 255:red, 74; green, 144; blue, 226 }  ,draw opacity=1 ][fill={rgb, 255:red, 74; green, 144; blue, 226 }  ,fill opacity=1 ][line width=0.75]      (0, 0) circle [x radius= 3.35, y radius= 3.35]   ;
\draw [color={rgb, 255:red, 74; green, 144; blue, 226 }  ,draw opacity=1 ]   (335.83,139.17) -- (372.83,109.17) ;
\draw [color={rgb, 255:red, 208; green, 2; blue, 27 }  ,draw opacity=1 ]   (206.33,109.17) -- (224.83,83.17) ;
\draw [shift={(224.83,83.17)}, rotate = 305.43] [color={rgb, 255:red, 208; green, 2; blue, 27 }  ,draw opacity=1 ][fill={rgb, 255:red, 208; green, 2; blue, 27 }  ,fill opacity=1 ][line width=0.75]      (0, 0) circle [x radius= 3.35, y radius= 3.35]   ;
\draw [color={rgb, 255:red, 208; green, 2; blue, 27 }  ,draw opacity=1 ]   (224.83,83.17) -- (249.33,62.67) ;
\draw [shift={(249.33,62.67)}, rotate = 320.08] [color={rgb, 255:red, 208; green, 2; blue, 27 }  ,draw opacity=1 ][fill={rgb, 255:red, 208; green, 2; blue, 27 }  ,fill opacity=1 ][line width=0.75]      (0, 0) circle [x radius= 3.35, y radius= 3.35]   ;
\draw [color={rgb, 255:red, 208; green, 2; blue, 27 }  ,draw opacity=1 ]   (249.33,62.67) -- (285.83,61.17) ;
\draw [shift={(285.83,61.17)}, rotate = 357.65] [color={rgb, 255:red, 208; green, 2; blue, 27 }  ,draw opacity=1 ][fill={rgb, 255:red, 208; green, 2; blue, 27 }  ,fill opacity=1 ][line width=0.75]      (0, 0) circle [x radius= 3.35, y radius= 3.35]   ;
\draw [color={rgb, 255:red, 208; green, 2; blue, 27 }  ,draw opacity=1 ]   (285.83,61.17) -- (326.33,63.67) ;
\draw [shift={(326.33,63.67)}, rotate = 3.53] [color={rgb, 255:red, 208; green, 2; blue, 27 }  ,draw opacity=1 ][fill={rgb, 255:red, 208; green, 2; blue, 27 }  ,fill opacity=1 ][line width=0.75]      (0, 0) circle [x radius= 3.35, y radius= 3.35]   ;
\draw [color={rgb, 255:red, 208; green, 2; blue, 27 }  ,draw opacity=1 ]   (326.33,63.67) -- (354.83,77.17) ;
\draw [shift={(354.83,77.17)}, rotate = 25.35] [color={rgb, 255:red, 208; green, 2; blue, 27 }  ,draw opacity=1 ][fill={rgb, 255:red, 208; green, 2; blue, 27 }  ,fill opacity=1 ][line width=0.75]      (0, 0) circle [x radius= 3.35, y radius= 3.35]   ;
\draw [color={rgb, 255:red, 208; green, 2; blue, 27 }  ,draw opacity=1 ]   (354.83,77.17) -- (372.83,109.17) ;
\draw    (432.83,107.17) -- (466.13,107.17) ;
\draw [shift={(466.13,107.17)}, rotate = 0] [color={rgb, 255:red, 0; green, 0; blue, 0 }  ][fill={rgb, 255:red, 0; green, 0; blue, 0 }  ][line width=0.75]      (0, 0) circle [x radius= 3.35, y radius= 3.35]   ;
\draw [shift={(432.83,107.17)}, rotate = 0] [color={rgb, 255:red, 0; green, 0; blue, 0 }  ][fill={rgb, 255:red, 0; green, 0; blue, 0 }  ][line width=0.75]      (0, 0) circle [x radius= 3.35, y radius= 3.35]   ;
\draw    (466.13,107.17) -- (499.43,107.17) ;
\draw [shift={(499.43,107.17)}, rotate = 360] [color={rgb, 255:red, 0; green, 0; blue, 0 }  ][fill={rgb, 255:red, 0; green, 0; blue, 0 }  ][line width=0.75]      (0, 0) circle [x radius= 3.35, y radius= 3.35]   ;
\draw [shift={(466.13,107.17)}, rotate = 360] [color={rgb, 255:red, 0; green, 0; blue, 0 }  ][fill={rgb, 255:red, 0; green, 0; blue, 0 }  ][line width=0.75]      (0, 0) circle [x radius= 3.35, y radius= 3.35]   ;
\draw    (499.43,107.17) -- (532.73,107.17) ;
\draw [shift={(532.73,107.17)}, rotate = 0] [color={rgb, 255:red, 0; green, 0; blue, 0 }  ][fill={rgb, 255:red, 0; green, 0; blue, 0 }  ][line width=0.75]      (0, 0) circle [x radius= 3.35, y radius= 3.35]   ;
\draw [shift={(499.43,107.17)}, rotate = 0] [color={rgb, 255:red, 0; green, 0; blue, 0 }  ][fill={rgb, 255:red, 0; green, 0; blue, 0 }  ][line width=0.75]      (0, 0) circle [x radius= 3.35, y radius= 3.35]   ;
\draw    (532.73,107.17) -- (566.03,107.17) ;
\draw [shift={(566.03,107.17)}, rotate = 0] [color={rgb, 255:red, 0; green, 0; blue, 0 }  ][fill={rgb, 255:red, 0; green, 0; blue, 0 }  ][line width=0.75]      (0, 0) circle [x radius= 3.35, y radius= 3.35]   ;
\draw [shift={(532.73,107.17)}, rotate = 0] [color={rgb, 255:red, 0; green, 0; blue, 0 }  ][fill={rgb, 255:red, 0; green, 0; blue, 0 }  ][line width=0.75]      (0, 0) circle [x radius= 3.35, y radius= 3.35]   ;
\draw    (566.03,107.17) -- (599.33,107.17) ;
\draw [shift={(599.33,107.17)}, rotate = 0] [color={rgb, 255:red, 0; green, 0; blue, 0 }  ][fill={rgb, 255:red, 0; green, 0; blue, 0 }  ][line width=0.75]      (0, 0) circle [x radius= 3.35, y radius= 3.35]   ;
\draw [shift={(566.03,107.17)}, rotate = 0] [color={rgb, 255:red, 0; green, 0; blue, 0 }  ][fill={rgb, 255:red, 0; green, 0; blue, 0 }  ][line width=0.75]      (0, 0) circle [x radius= 3.35, y radius= 3.35]   ;
\draw [color={rgb, 255:red, 74; green, 144; blue, 226 }  ,draw opacity=1 ]   (432.83,107.17) -- (459.83,131.67) ;
\draw [shift={(459.83,131.67)}, rotate = 42.22] [color={rgb, 255:red, 74; green, 144; blue, 226 }  ,draw opacity=1 ][fill={rgb, 255:red, 74; green, 144; blue, 226 }  ,fill opacity=1 ][line width=0.75]      (0, 0) circle [x radius= 3.35, y radius= 3.35]   ;
\draw [color={rgb, 255:red, 74; green, 144; blue, 226 }  ,draw opacity=1 ]   (459.83,131.67) -- (489.83,147.17) ;
\draw [shift={(489.83,147.17)}, rotate = 27.32] [color={rgb, 255:red, 74; green, 144; blue, 226 }  ,draw opacity=1 ][fill={rgb, 255:red, 74; green, 144; blue, 226 }  ,fill opacity=1 ][line width=0.75]      (0, 0) circle [x radius= 3.35, y radius= 3.35]   ;
\draw [color={rgb, 255:red, 74; green, 144; blue, 226 }  ,draw opacity=1 ]   (489.83,147.17) -- (526.33,147.67) ;
\draw [shift={(526.33,147.67)}, rotate = 0.78] [color={rgb, 255:red, 74; green, 144; blue, 226 }  ,draw opacity=1 ][fill={rgb, 255:red, 74; green, 144; blue, 226 }  ,fill opacity=1 ][line width=0.75]      (0, 0) circle [x radius= 3.35, y radius= 3.35]   ;
\draw [color={rgb, 255:red, 74; green, 144; blue, 226 }  ,draw opacity=1 ]   (526.33,147.67) -- (562.33,137.17) ;
\draw [shift={(562.33,137.17)}, rotate = 343.74] [color={rgb, 255:red, 74; green, 144; blue, 226 }  ,draw opacity=1 ][fill={rgb, 255:red, 74; green, 144; blue, 226 }  ,fill opacity=1 ][line width=0.75]      (0, 0) circle [x radius= 3.35, y radius= 3.35]   ;
\draw [color={rgb, 255:red, 74; green, 144; blue, 226 }  ,draw opacity=1 ]   (562.33,137.17) -- (599.33,107.17) ;
\draw [color={rgb, 255:red, 208; green, 2; blue, 27 }  ,draw opacity=1 ]   (432.83,107.17) -- (451.33,81.17) ;
\draw [shift={(451.33,81.17)}, rotate = 305.43] [color={rgb, 255:red, 208; green, 2; blue, 27 }  ,draw opacity=1 ][fill={rgb, 255:red, 208; green, 2; blue, 27 }  ,fill opacity=1 ][line width=0.75]      (0, 0) circle [x radius= 3.35, y radius= 3.35]   ;
\draw [color={rgb, 255:red, 208; green, 2; blue, 27 }  ,draw opacity=1 ]   (451.33,81.17) -- (475.83,60.67) ;
\draw [shift={(475.83,60.67)}, rotate = 320.08] [color={rgb, 255:red, 208; green, 2; blue, 27 }  ,draw opacity=1 ][fill={rgb, 255:red, 208; green, 2; blue, 27 }  ,fill opacity=1 ][line width=0.75]      (0, 0) circle [x radius= 3.35, y radius= 3.35]   ;
\draw [color={rgb, 255:red, 208; green, 2; blue, 27 }  ,draw opacity=1 ]   (475.83,60.67) -- (512.33,59.17) ;
\draw [shift={(512.33,59.17)}, rotate = 357.65] [color={rgb, 255:red, 208; green, 2; blue, 27 }  ,draw opacity=1 ][fill={rgb, 255:red, 208; green, 2; blue, 27 }  ,fill opacity=1 ][line width=0.75]      (0, 0) circle [x radius= 3.35, y radius= 3.35]   ;
\draw [color={rgb, 255:red, 208; green, 2; blue, 27 }  ,draw opacity=1 ]   (512.33,59.17) -- (552.83,61.67) ;
\draw [shift={(552.83,61.67)}, rotate = 3.53] [color={rgb, 255:red, 208; green, 2; blue, 27 }  ,draw opacity=1 ][fill={rgb, 255:red, 208; green, 2; blue, 27 }  ,fill opacity=1 ][line width=0.75]      (0, 0) circle [x radius= 3.35, y radius= 3.35]   ;
\draw [color={rgb, 255:red, 208; green, 2; blue, 27 }  ,draw opacity=1 ]   (552.83,61.67) -- (581.33,75.17) ;
\draw [shift={(581.33,75.17)}, rotate = 25.35] [color={rgb, 255:red, 208; green, 2; blue, 27 }  ,draw opacity=1 ][fill={rgb, 255:red, 208; green, 2; blue, 27 }  ,fill opacity=1 ][line width=0.75]      (0, 0) circle [x radius= 3.35, y radius= 3.35]   ;
\draw [color={rgb, 255:red, 208; green, 2; blue, 27 }  ,draw opacity=1 ]   (581.33,75.17) -- (599.33,107.17) ;
\draw  [dash pattern={on 0.84pt off 2.51pt}]  (401.83,7.17) -- (401.83,198.17) ;

\draw (43.33,96.57) node [anchor=north west][inner sep=0.75pt]    {$u$};
\draw (155.33,98.57) node [anchor=north west][inner sep=0.75pt]    {$v$};
\draw (98.33,54.07) node [anchor=north west][inner sep=0.75pt]    {$e_{1}$};
\draw (101.33,135.07) node [anchor=north west][inner sep=0.75pt]    {$e_{2}$};
\draw (95.83,90.07) node [anchor=north west][inner sep=0.75pt]    {$e_{3}$};
\draw (132.83,32.07) node [anchor=north west][inner sep=0.75pt]    {$H_{i} '$};
\draw (185.33,98.07) node [anchor=north west][inner sep=0.75pt]    {$u$};
\draw (380.33,96.07) node [anchor=north west][inner sep=0.75pt]    {$v$};
\draw (279.83,86.57) node [anchor=north west][inner sep=0.75pt]    {$e_{3}$};
\draw (276.83,159.07) node [anchor=north west][inner sep=0.75pt]    {$Q_{e_{2}}$};
\draw (272.83,33.07) node [anchor=north west][inner sep=0.75pt]    {$Q_{e_{1}}$};
\draw (411.83,96.07) node [anchor=north west][inner sep=0.75pt]    {$u$};
\draw (606.83,94.07) node [anchor=north west][inner sep=0.75pt]    {$v$};
\draw (503.33,157.07) node [anchor=north west][inner sep=0.75pt]    {$Q_{e_{2}}$};
\draw (502.83,83.57) node [anchor=north west][inner sep=0.75pt]    {$Q_{e_{3}}$};
\draw (499.33,31.07) node [anchor=north west][inner sep=0.75pt]    {$Q_{e_{1}}$};
\draw (90.67,205.73) node [anchor=north west][inner sep=0.75pt]    {$( a)$};
\draw (277.33,205.73) node [anchor=north west][inner sep=0.75pt]    {$( b)$};
\draw (510.67,205.07) node [anchor=north west][inner sep=0.75pt]    {$( c)$};
\draw (359.33,29.4) node [anchor=north west][inner sep=0.75pt]    {$G_{m_{i}}$};
\draw (594.67,31.4) node [anchor=north west][inner sep=0.75pt]    {$G_{m_{i}}$};

\end{tikzpicture}
    \caption{Parallel edges in $H_i'$}
    \label{fig3}
\end{figure}

However, $G_{m_i}$ cannot be as the graphs in Figure~\ref{fig3} (b) or (c). Indeed, recall that $G_i \subset G_{m_i}$ and, if $w \in V(G_i)$, then $d_{G_{m_i}}(w)=3$. Nevertheless, in a graph $G_{m_i}$ such as those in Figure~\ref{fig3} (b) or (c), the only vertices of degree 3 are $u$ and $v$. Thus, we would have $V(G_i) \subset \{u, v\}$. Consequently, $G_i$ could not contain any cycles, which is a contradiction, since $G_0 \subset G_i$ for $i \geq 0$ and $G_0$ is a cycle. This completes the proof of the claim.
\end{proof}

Thus, given $i\in\mathbb{N}$ and $u,v\in V(H_i')$, there are at most two edges in $H_i'$ connecting these two vertices, say $e_{uv}$ and $e'_{uv}$. We define the graph $H_i$ from $H_i'$ through the following process: for each pair of vertices $u,v\in V(H_i')$ forming double edges $e_{uv}$ and $e'_{uv}$, we subdivide them by adding vertex $w_{uv}$ to $e_{uv}$ and vertex $w'_{uv}$ to $e'_{uv}$. Then, we add an edge between $w_{uv}$ and $w'_{uv}$, as shown in Figure \ref{fig4}. In this way, we define the graph $H_i$ with the following sets of vertices and edges:
\[
V(H_i)=V(H_i')\cup \lbrace w_{uv}, w'_{uv}: \{u, v\} \in R_i\rbrace;
\]
\[
E(H_i)=\lbrace uv\in E(H_i'): \{u,v\} \in S_i\rbrace\cup \lbrace w_{uv}u, w_{uv}v, w'_{uv}u, w'_{uv}v, w_{uv}w'_{uv}: \{u,v\} \in R_i\rbrace,
\]
where $S_i$ is the set of pairs of vertices $u,v\in V(H_i')$ that do not define parallel edges in $H_i'$; and $R_i$ is the set of pairs of vertices $u,v\in V(H_i')$ that define parallel edges in $H_i'$.

\begin{figure}[ht]
    \centering

\tikzset{every picture/.style={line width=0.75pt}} 

\begin{tikzpicture}[x=0.65pt,y=0.65pt,yscale=-1,xscale=1]

\draw    (112.53,82.17) -- (196.03,83.17) ;
\draw [shift={(196.03,83.17)}, rotate = 0.69] [color={rgb, 255:red, 0; green, 0; blue, 0 }  ][fill={rgb, 255:red, 0; green, 0; blue, 0 }  ][line width=0.75]      (0, 0) circle [x radius= 3.35, y radius= 3.35]   ;
\draw [shift={(112.53,82.17)}, rotate = 0.69] [color={rgb, 255:red, 0; green, 0; blue, 0 }  ][fill={rgb, 255:red, 0; green, 0; blue, 0 }  ][line width=0.75]      (0, 0) circle [x radius= 3.35, y radius= 3.35]   ;
\draw [color={rgb, 255:red, 208; green, 2; blue, 27 }  ,draw opacity=1 ]   (112.53,82.17) .. controls (128.03,38.17) and (191.03,50.17) .. (196.03,83.17) ;
\draw    (292.87,83.83) -- (334.62,84.33) ;
\draw [shift={(334.62,84.33)}, rotate = 0.69] [color={rgb, 255:red, 0; green, 0; blue, 0 }  ][fill={rgb, 255:red, 0; green, 0; blue, 0 }  ][line width=0.75]      (0, 0) circle [x radius= 3.35, y radius= 3.35]   ;
\draw [shift={(292.87,83.83)}, rotate = 0.69] [color={rgb, 255:red, 0; green, 0; blue, 0 }  ][fill={rgb, 255:red, 0; green, 0; blue, 0 }  ][line width=0.75]      (0, 0) circle [x radius= 3.35, y radius= 3.35]   ;
\draw    (334.62,84.33) -- (376.37,84.83) ;
\draw [shift={(376.37,84.83)}, rotate = 0.69] [color={rgb, 255:red, 0; green, 0; blue, 0 }  ][fill={rgb, 255:red, 0; green, 0; blue, 0 }  ][line width=0.75]      (0, 0) circle [x radius= 3.35, y radius= 3.35]   ;
\draw [shift={(334.62,84.33)}, rotate = 0.69] [color={rgb, 255:red, 0; green, 0; blue, 0 }  ][fill={rgb, 255:red, 0; green, 0; blue, 0 }  ][line width=0.75]      (0, 0) circle [x radius= 3.35, y radius= 3.35]   ;
\draw [color={rgb, 255:red, 208; green, 2; blue, 27 }  ,draw opacity=1 ]   (292.87,83.83) .. controls (308.37,39.83) and (371.37,51.83) .. (376.37,84.83) ;
\draw [shift={(335.25,55.27)}, rotate = 4.76] [color={rgb, 255:red, 208; green, 2; blue, 27 }  ,draw opacity=1 ][fill={rgb, 255:red, 208; green, 2; blue, 27 }  ,fill opacity=1 ][line width=0.75]      (0, 0) circle [x radius= 3.35, y radius= 3.35]   ;
\draw    (459.53,86.5) -- (501.28,87) ;
\draw [shift={(501.28,87)}, rotate = 0.69] [color={rgb, 255:red, 0; green, 0; blue, 0 }  ][fill={rgb, 255:red, 0; green, 0; blue, 0 }  ][line width=0.75]      (0, 0) circle [x radius= 3.35, y radius= 3.35]   ;
\draw [shift={(459.53,86.5)}, rotate = 0.69] [color={rgb, 255:red, 0; green, 0; blue, 0 }  ][fill={rgb, 255:red, 0; green, 0; blue, 0 }  ][line width=0.75]      (0, 0) circle [x radius= 3.35, y radius= 3.35]   ;
\draw    (501.28,87) -- (543.03,87.5) ;
\draw [shift={(543.03,87.5)}, rotate = 0.69] [color={rgb, 255:red, 0; green, 0; blue, 0 }  ][fill={rgb, 255:red, 0; green, 0; blue, 0 }  ][line width=0.75]      (0, 0) circle [x radius= 3.35, y radius= 3.35]   ;
\draw [shift={(501.28,87)}, rotate = 0.69] [color={rgb, 255:red, 0; green, 0; blue, 0 }  ][fill={rgb, 255:red, 0; green, 0; blue, 0 }  ][line width=0.75]      (0, 0) circle [x radius= 3.35, y radius= 3.35]   ;
\draw [color={rgb, 255:red, 208; green, 2; blue, 27 }  ,draw opacity=1 ]   (459.53,86.5) .. controls (475.03,42.5) and (538.03,54.5) .. (543.03,87.5) ;
\draw [shift={(501.91,57.94)}, rotate = 4.76] [color={rgb, 255:red, 208; green, 2; blue, 27 }  ,draw opacity=1 ][fill={rgb, 255:red, 208; green, 2; blue, 27 }  ,fill opacity=1 ][line width=0.75]      (0, 0) circle [x radius= 3.35, y radius= 3.35]   ;
\draw    (501.28,87) -- (502.2,58.17) ;
\draw [shift={(502.2,58.17)}, rotate = 271.82] [color={rgb, 255:red, 0; green, 0; blue, 0 }  ][fill={rgb, 255:red, 0; green, 0; blue, 0 }  ][line width=0.75]      (0, 0) circle [x radius= 3.35, y radius= 3.35]   ;
\draw   (227.5,69.38) -- (242.8,69.38) -- (242.8,65) -- (253,73.75) -- (242.8,82.5) -- (242.8,78.13) -- (227.5,78.13) -- cycle ;
\draw   (403,69.38) -- (418.3,69.38) -- (418.3,65) -- (428.5,73.75) -- (418.3,82.5) -- (418.3,78.13) -- (403,78.13) -- cycle ;

\draw (91.53,71.07) node [anchor=north west][inner sep=0.75pt]    {$u$};
\draw (203.53,73.07) node [anchor=north west][inner sep=0.75pt]    {$v$};
\draw (191.7,17.23) node [anchor=north west][inner sep=0.75pt]    {$H_{i} '$};
\draw (146.2,80.4) node [anchor=north west][inner sep=0.75pt]    {$e_{uv}$};
\draw (143.7,31.4) node [anchor=north west][inner sep=0.75pt]    {$e'_{uv}$};
\draw (271.87,72.73) node [anchor=north west][inner sep=0.75pt]    {$u$};
\draw (383.87,74.73) node [anchor=north west][inner sep=0.75pt]    {$v$};
\draw (322.53,93.9) node [anchor=north west][inner sep=0.75pt]    {$w_{uv}$};
\draw (321.2,27.9) node [anchor=north west][inner sep=0.75pt]    {$w'_{uv}$};
\draw (438.53,75.4) node [anchor=north west][inner sep=0.75pt]    {$u$};
\draw (550.53,77.4) node [anchor=north west][inner sep=0.75pt]    {$v$};
\draw (489.2,96.57) node [anchor=north west][inner sep=0.75pt]    {$w_{uv}$};
\draw (487.87,30.57) node [anchor=north west][inner sep=0.75pt]    {$w'_{uv}$};
\draw (553.4,23.5) node [anchor=north west][inner sep=0.75pt]    {$H_{i}$};

\end{tikzpicture}
    \caption{Construction of the $H_i$}
    \label{fig4}
\end{figure}

Thus, $H_i$ is a finite, bridgeless, and cubic graph, since $G_{m_i}$ is as well. Furthermore, given a fixed non-principal ultrafilter $F$ over $\mathbb{N}$, $G$ is the $F$-limit of the sequence $\langle H_i\rangle_{i\in\mathbb{N}}$. Indeed, given $v \in V(G)$, there exists $n \in \mathbb{N}$ such that $v \in V(G_m)$ and $N_G(v) \subset V(G_m)$ for all $m \geq n$, since $G = \bigcup_{n\in\mathbb{N}} G_n$. Consequently, $v \in V(H'_m)$ for sufficiently large $m$, as it is not an internal vertex of any path to be contracted. Since $F$ is a non-principal ultrafilter (and thus contains all cofinite sets of $\mathbb{N}$) and $\{ m \in \mathbb{N} : v \in V(G_m) \} = \{ m \in \mathbb{N} : m \geq n \}$, it follows that $\{ m \in \mathbb{N} : v \in V(G_m) \} \in F$.

Given an edge $e = uv \in E(G)$, there exists $n \in \mathbb{N}$ such that $e \in E(G_m)$ and $N_G(u), N_G(v),$ $N_G(N_G(v)), N_G(N_G(u)) \subset V(G_m)$ for all $m \geq n$. Thus, the vertices $u, v$ and their neighbors have degree 3 in $G_m$ for all $m \geq n$. Therefore, $u, v \in V(H'_m)$ and $e = uv \in E(H'_m)$; moreover, the edge $e$ is not a parallel edge in $H_m'$. Thus, $e \in E(H_m)$ for all $m \geq n$. As before, the set of such indices belongs to $F$. Hence, $G$ is contained in the $F$-limit of $\langle H_i\rangle_{i\in\mathbb{N}}$.

We now show that the only vertices and edges of the $F$-limit are those of $G$. Let $u, v \in V(G)$. Note that there exists $n \in \mathbb{N}$ such that for all $m \geq n$, $u$ and $v$ are not endpoints of paths in $\mathcal{P}_m$ that would yield vertices $w_{uv}$ or $w'_{uv}$. Thus, $w_{uv}, w'_{uv} \notin V(H_m)$ for all sufficiently large $m$. Therefore, these vertices are not in the $F$-limit. Analogously, it can be proven that the edges of $H_i$ created from path contractions or the edges $w_{uv}w'_{uv}$ also do not belong to the $F$-limit. Consequently, the $F$-limit of $\langle H_i\rangle_{i\in\mathbb{N}}$ is exactly the graph $G$. This completes the proof of the lemma.
\end{proof}

\begin{lemma}\label{lemma2}
    Let $G$ be an infinite, cubic, bridgeless, connected graph, and let $\langle H_i \rangle_{i \in \mathbb{N}}$ be a sequence of finite graphs as in the proof of Lemma~\ref{thm1}. For each $H_i$, suppose that $H_i$ has a perfect matching $M_i$. If $M \subset E(G)$ is the $F$-limit of the sequence of perfect matchings $\langle M_i \rangle_{i \in \mathbb{N}}$, then $M$ is a perfect matching of $G$.
\end{lemma}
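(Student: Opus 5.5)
The plan is to fix a vertex $v\in V(G)$ and show that exactly one edge of $M$ is incident to $v$. Since $M$ is the $F$-limit of $\langle M_i\rangle$ and $M_i\subset E(H_i)$, every edge of $M$ lies in $\{n : e\in E(H_n)\}\in F$. The proof of Lemma~\ref{thm1} shows that such an $e$ is an edge of $G$, so $M\subset E(G)$ is consistent. The key input from that proof is local stabilization. For $v$ with $N_G(v)=\{a,b,c\}$, write $e_a=va$, $e_b=vb$, $e_c=vc$. There is $n_v$ such that for all $m\ge n_v$ the vertex $v$ lies in $V(H_m)$, and the edges of $H_m$ incident to $v$ are exactly $e_a,e_b,e_c$. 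This is the argument already given: once $v$, its neighbours and their neighbours have degree $3$ in $G_m$, none of these edges is a contracted path or a parallel edge. I will make this explicit by applying the edge part of the proof of Lemma~\ref{thm1} to each of $e_a,e_b,e_c$ and taking the maximum index. Since $H_m$ is cubic and contains these three edges at $v$, they are all the edges of $H_m$ at $v$.

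Next, for $m\ge n_v$, $M_m$ is a perfect matching of $H_m$, so exactly one of $e_a,e_b,e_c$ lies in $M_m$. Set $A_x=\{m\ge n_v : e_x\in M_m\}$ for $x\in\{a,b,c\}$. The sets $A_a,A_b,A_c$ are pairwise disjoint, and their union is the cofinite set $\{m: m\ge n_v\}$, which belongs to $F$ because $F$ is non-principal. By Proposition~\ref{ultrafilter}, some $A_x$ belongs to $F$, so $e_x\in M$. No two of them can belong to $F$, since their intersection is empty and $\emptyset\notin F$. Hence for $y\neq x$ we have $\{m : e_y\in M_m\}\subset A_y\cup\{0,\dots,n_v-1\}$, which is not in $F$. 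Indeed, if it were, intersecting with the cofinite set $\{m\ge n_v\}$ would give $A_y\in F$, and then $A_x\cap A_y=\emptyset\in F$. So $e_y\notin M$.

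Combining these, every vertex of $G$ is incident to exactly one edge of $M$, and all edges of $M$ are edges of $G$. Therefore $M$ is a perfect matching of $G$. I expect the main obstacle to be the first step. The rest is routine ultrafilter bookkeeping, but it relies on stating carefully that eventually the neighbourhood of $v$ in $H_m$ coincides with that in $G$, with no extra edges at $v$ coming from contracted paths or from the gadgets $w_{uv}w'_{uv}$. I will extract this precisely from the construction of $H_m$ rather than citing the $F$-limit conclusion alone, since the latter gives only ``$F$-many'' indices and not a uniform tail. Even so, $F$-many would suffice, because intersecting the finitely many relevant sets stays in $F$.
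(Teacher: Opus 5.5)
Your proposal is correct and follows essentially the same route as the paper: you stabilize the neighbourhood of $v$ in $H_m$ for all large $m$ using the construction from Lemma~\ref{thm1}, then partition the tail $\{m : m\geq n_v\}$ according to which of the three edges at $v$ lies in $M_m$, and use the ultrafilter property to select exactly one of them. You also spell out two steps the paper leaves implicit: you handle the finitely many indices below $n_v$ when excluding the non-selected edges from $M$, and you check that $M\subset E(G)$.
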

\begin{proof}
    First, we will show that all vertices of $G$ are covered by $M$. Let $F$ be a fixed non-principal ultrafilter over $\mathbb{N}$. Given $v \in V(G)$, the neighborhood $N_G(v)$ is finite since $G$ is locally finite. Since $G$ is the $F$-limit of $\langle H_n\rangle_{n\in\mathbb{N}}$, we have $\{ n \in \mathbb{N} : v \in V(H_n) \} \in F$. Furthermore, by the construction of the graphs $H_i$, if a vertex $v \in V(G_i)$ satisfies $N_{G_i}(v) = N_G(v)$, then $N_{G_{m_i}}(v) = N_G(v)$ and $N_{G_{m_i}}(w) = N_G(w)$ for all $w \in N_G(v)$. Thus, $N_{H_i}(v) = N_G(v)$. Consequently, there exists $n \in \mathbb{N}$ such that $N_{H_i}(v) = N_G(v)$ for all $i \geq n$.

Since $M_i$ is a perfect matching for $H_i$, for each $i \geq n$ there exists a neighbor $w \in N_G(v)$ such that $vw \in M_i$. Let $E_k = \{ i \in \mathbb{N} : i \geq n \text{ and } vw_k \in M_i \}$, where $N_G(v) = \{ w_1, w_2, w_3 \}$. Since each $M_i$ is a perfect matching, it follows that $E_k \cap E_l = \emptyset$ for any distinct $k, l \in \{ 1, 2, 3 \}$. Moreover,
\[
E_1 \cup E_2 \cup E_3 = \{ i \in \mathbb{N} : v \in V(G_i) \} = \{ i \in \mathbb{N} : i \geq n \} \in F.
\]
Thus, since $F$ is a non-principal ultrafilter, there exists a unique $k \in \{ 1, 2, 3 \}$ such that $E_k \in F$. Therefore, $vw_k \in M$. Furthermore, since this $E_k$ is unique, we cannot have $vw_l \in M$ for $l \neq k$. Hence, $M$ contains exactly one edge incident to $v$. Since $v$ is an arbitrary vertex of $G$, it follows that $M$ is a perfect matching for $G$.
\end{proof}

Let $k,p\in\mathbb{N}_{>0}$ with $p\leq k$, and let $G$ be a graph. A \textbf{$(k,p)$-perfect matching cover} of $G$ (\textbf{$(k,p)$-PM cover}) is a collection of $k$ perfect matchings such that each edge of $G$ belongs to at most $p$ of them. A \textbf{$(k,p)^*$-perfect matching cover} of $G$ (\textbf{$(k,p)^*$-PM cover}) is a collection of $k$ perfect matchings such that each edge of $G$ belongs to at least $p$ of them. Observe that a collection of perfect matchings $\mathcal{M}$ of a graph $G$ is both a $(k,p)$-PM cover and a $(k,p)^*$-PM cover if and only if each edge of $G$ belongs to exactly $p$ perfect matchings in the collection $\mathcal{M}$.

\begin{lemma}\label{lemma3}
    Let $G$ be a finite, cubic, bridgeless, connected graph. There exists an infinite, cubic, bridgeless, connected graph $H$ such that if $H$ has a $(k,p)$-PM cover ($(k,p)^*$-PM cover) for some $k,p \in \mathbb{N}_{>0}$, then so does $G$.
\end{lemma}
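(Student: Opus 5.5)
The plan is to build $H$ by gluing $G$, with one edge removed, onto an infinite two-ended ladder that also has one rung removed. The gluing uses exactly two new edges. This gives a $2$-edge cut whose finite side has an even number of vertices, so any perfect matching of $H$ crosses that cut in $0$ or $2$ edges and restricts to a perfect matching of $G$.

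\textbf{Construction.} Let $L$ be the infinite ladder, with vertex set $\mathbb{Z}\times\{0,1\}$. Its edges are the rails $(n,j)(n+1,j)$ and the rungs $(n,0)(n,1)$. Then $L$ is infinite, cubic, connected and bridgeless: every edge lies on a $4$-cycle. Fix an edge $e=xy$ of $G$ and the rung $ab$ with $a=(0,0)$ and $b=(0,1)$. Let $H$ be the disjoint union of $G-e$ and $L-ab$, together with two new edges $xa$ and $yb$. Every vertex has degree $3$: the vertices $x,y,a,b$ each lose one edge and gain one. Also $H$ is connected, because $L-ab$ is connected and $G-e$ is connected ($G$ being bridgeless).

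\textbf{Bridgelessness.} I would check that every edge of $H$ lies on a cycle, treating three kinds of edge.
\begin{itemize}
\item An edge of $L-ab$ lies on a cycle inside $L-ab$, since removing one rung leaves $L$ $2$-edge-connected (the rails are still joined through the other rungs).
\item An edge $g\neq e$ of $G$ lies on a cycle $C$ of $G$. If $e\notin C$, then $C$ is a cycle of $H$. Otherwise, replace $e$ in $C$ by the path $x\,a\,(1,0)\,(1,1)\,b\,y$, which leaves $G-e$ immediately and so gives a cycle of $H$ through $g$.
\item The edges $xa$ and $yb$ lie on the cycle formed by an $x$–$y$ path in $G-e$ together with $y\,b\,(1,1)\,(1,0)\,a\,x$.
\end{itemize}
Connectivity of $G-e$ is exactly what the last two cases need. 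Hence $H$ is an infinite, cubic, bridgeless, connected graph.

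\textbf{Transfer of matchings.} Let $M$ be a perfect matching of $H$. The edge set $\{xa,yb\}$ is the cut separating $V(G)$ from $V(L)$. Since $|V(G)|$ is even (as $G$ is cubic) and every vertex of $G$ is matched, $|M\cap\{xa,yb\}|$ is even.
\begin{itemize}
\item If it is $0$, set $M^G=M\cap E(G-e)$, which is a perfect matching of $G$ avoiding $e$.
\item If it is $2$, then $x$ and $y$ are matched outside $G$, and $M^G=(M\cap E(G-e))\cup\{e\}$ is a perfect matching of $G$.
\end{itemize}
In both cases, for every edge $g\neq e$ of $G$ we have $g\in M^G$ iff $g\in M$, and $e\in M^G$ iff $xa\in M$.

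Now take a $(k,p)$-PM cover (resp.\ $(k,p)^*$-PM cover) $M_1,\dots,M_k$ of $H$ and form $M_1^G,\dots,M_k^G$. An edge $g\neq e$ of $G$ lies in as many $M_j^G$ as $M_j$. The edge $e$ lies in as many $M_j^G$ as $xa$ lies in $M_j$. So the upper bound $p$ (resp.\ lower bound $p$) carries over, and $G$ has the corresponding cover.

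\textbf{Main obstacle.} The delicate point is the construction itself, not the verification. A naive chain of copies of $G$ joined by single edges creates bridges. A chain joined by two edges per link breaks the parity argument, because a perfect matching could cross such a cut in a "mixed" way that gives no perfect matching of a copy. Attaching the ladder through one removed edge avoids both problems at once: the finite side forces parity, and the ladder supplies the $2$-edge-connectivity.
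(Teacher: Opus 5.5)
Your proposal is correct and follows essentially the paper's argument: you delete an edge $uv$ of $G$ and hang $G-uv$ by two edges on an infinite bridgeless ladder-type graph. The finite even side of the resulting $2$-edge cut forces every perfect matching of $H$ to use $0$ or $2$ of those edges, and you transfer the cover edge by edge, with $uv$ inheriting the count of one cut edge. The only difference is cosmetic: the paper hangs a $\mathbb{Z}$-indexed family of copies of $G-uv$ on a subdivided double ladder, while you attach a single copy at a deleted rung, which suffices since only one copy is ever used; your bridgelessness check is also more explicit than the paper's.
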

\begin{proof}
    Fix $u, v \in V(G)$ with $u \neq v$ and $uv \in E(G)$. We will construct an infinite, cubic, and bridgeless graph $H$ from the graph $G$. Consider $G' = G \setminus \{uv\}$ and an infinite family $\{G_i\}_{i\in\mathbb{Z}}$ of pairwise disjoint copies of $G'$. Let us denote by $u_i$ and $v_i$ the vertices of $G_i$ that are copies of the vertices $u$ and $v$, respectively. Note that, for each $i\in\mathbb{Z}$, the vertices of $G_i$ have degree 3 in $G_i$, except for the vertices $u_i$ and $v_i$, which have degree 2 in $G_i$. Let $L$ be the infinite double ladder, as shown in Figure \ref{fig6}, that is, $L$ is the graph composed of:
   
    $$V(L)=\lbrace x_i, y_i: i\in\mathbb{Z}\rbrace\quad  ; \quad E(L)=\lbrace x_ix_{i+1}: i\in\mathbb{Z}\rbrace \cup \lbrace y_iy_{i+1}: i\in\mathbb{Z}\rbrace\cup \lbrace x_iy_{i}: i\in\mathbb{Z}\rbrace.$$  
    \begin{figure}[ht]
        \centering

\tikzset{every picture/.style={line width=0.75pt}} 

\begin{tikzpicture}[x=0.75pt,y=0.75pt,yscale=-1,xscale=1]

\draw    (89,36.07) -- (126.9,36.07) ;
\draw [shift={(86,36.07)}, rotate = 0] [fill={rgb, 255:red, 0; green, 0; blue, 0 }  ][line width=0.08]  [draw opacity=0] (8.93,-4.29) -- (0,0) -- (8.93,4.29) -- cycle    ;
\draw    (126.9,36.07) -- (167.8,36.07) ;
\draw [shift={(167.8,36.07)}, rotate = 0] [color={rgb, 255:red, 0; green, 0; blue, 0 }  ][fill={rgb, 255:red, 0; green, 0; blue, 0 }  ][line width=0.75]      (0, 0) circle [x radius= 3.35, y radius= 3.35]   ;
\draw    (167.8,36.07) -- (208.69,36.07) ;
\draw [shift={(167.8,36.07)}, rotate = 360] [color={rgb, 255:red, 0; green, 0; blue, 0 }  ][fill={rgb, 255:red, 0; green, 0; blue, 0 }  ][line width=0.75]      (0, 0) circle [x radius= 3.35, y radius= 3.35]   ;
\draw    (208.69,36.07) -- (249.59,36.07) ;
\draw [shift={(249.59,36.07)}, rotate = 0] [color={rgb, 255:red, 0; green, 0; blue, 0 }  ][fill={rgb, 255:red, 0; green, 0; blue, 0 }  ][line width=0.75]      (0, 0) circle [x radius= 3.35, y radius= 3.35]   ;
\draw    (249.59,36.07) -- (290.49,36.07) ;
\draw [shift={(249.59,36.07)}, rotate = 0] [color={rgb, 255:red, 0; green, 0; blue, 0 }  ][fill={rgb, 255:red, 0; green, 0; blue, 0 }  ][line width=0.75]      (0, 0) circle [x radius= 3.35, y radius= 3.35]   ;
\draw    (290.49,36.07) -- (331.39,36.07) ;
\draw [shift={(331.39,36.07)}, rotate = 0] [color={rgb, 255:red, 0; green, 0; blue, 0 }  ][fill={rgb, 255:red, 0; green, 0; blue, 0 }  ][line width=0.75]      (0, 0) circle [x radius= 3.35, y radius= 3.35]   ;
\draw    (331.39,36.07) -- (372.29,36.07) ;
\draw    (372.29,36.07) -- (413.19,36.07) ;
\draw [shift={(413.19,36.07)}, rotate = 360] [color={rgb, 255:red, 0; green, 0; blue, 0 }  ][fill={rgb, 255:red, 0; green, 0; blue, 0 }  ][line width=0.75]      (0, 0) circle [x radius= 3.35, y radius= 3.35]   ;
\draw    (413.19,36.07) -- (435.5,36.07) -- (454.08,36.07) ;
\draw [shift={(413.19,36.07)}, rotate = 0] [color={rgb, 255:red, 0; green, 0; blue, 0 }  ][fill={rgb, 255:red, 0; green, 0; blue, 0 }  ][line width=0.75]      (0, 0) circle [x radius= 3.35, y radius= 3.35]   ;
\draw    (454.08,36.07) -- (494.98,36.07) ;
\draw [shift={(494.98,36.07)}, rotate = 360] [color={rgb, 255:red, 0; green, 0; blue, 0 }  ][fill={rgb, 255:red, 0; green, 0; blue, 0 }  ][line width=0.75]      (0, 0) circle [x radius= 3.35, y radius= 3.35]   ;
\draw    (494.98,36.07) -- (535.88,36.07) ;
\draw [shift={(494.98,36.07)}, rotate = 0] [color={rgb, 255:red, 0; green, 0; blue, 0 }  ][fill={rgb, 255:red, 0; green, 0; blue, 0 }  ][line width=0.75]      (0, 0) circle [x radius= 3.35, y radius= 3.35]   ;
\draw    (535.88,36.07) -- (573.78,36.07) ;
\draw [shift={(576.78,36.07)}, rotate = 180] [fill={rgb, 255:red, 0; green, 0; blue, 0 }  ][line width=0.08]  [draw opacity=0] (8.93,-4.29) -- (0,0) -- (8.93,4.29) -- cycle    ;
\draw    (89.67,84.07) -- (168.46,84.07) ;
\draw [shift={(168.46,84.07)}, rotate = 0] [color={rgb, 255:red, 0; green, 0; blue, 0 }  ][fill={rgb, 255:red, 0; green, 0; blue, 0 }  ][line width=0.75]      (0, 0) circle [x radius= 3.35, y radius= 3.35]   ;
\draw [shift={(86.67,84.07)}, rotate = 0] [fill={rgb, 255:red, 0; green, 0; blue, 0 }  ][line width=0.08]  [draw opacity=0] (8.93,-4.29) -- (0,0) -- (8.93,4.29) -- cycle    ;
\draw    (168.46,84.07) -- (167.8,36.07) ;
\draw [shift={(167.8,36.07)}, rotate = 269.2] [color={rgb, 255:red, 0; green, 0; blue, 0 }  ][fill={rgb, 255:red, 0; green, 0; blue, 0 }  ][line width=0.75]      (0, 0) circle [x radius= 3.35, y radius= 3.35]   ;
\draw [shift={(168.46,84.07)}, rotate = 269.2] [color={rgb, 255:red, 0; green, 0; blue, 0 }  ][fill={rgb, 255:red, 0; green, 0; blue, 0 }  ][line width=0.75]      (0, 0) circle [x radius= 3.35, y radius= 3.35]   ;
\draw    (168.46,84.07) -- (250.26,84.07) ;
\draw [shift={(250.26,84.07)}, rotate = 0] [color={rgb, 255:red, 0; green, 0; blue, 0 }  ][fill={rgb, 255:red, 0; green, 0; blue, 0 }  ][line width=0.75]      (0, 0) circle [x radius= 3.35, y radius= 3.35]   ;
\draw [shift={(168.46,84.07)}, rotate = 0] [color={rgb, 255:red, 0; green, 0; blue, 0 }  ][fill={rgb, 255:red, 0; green, 0; blue, 0 }  ][line width=0.75]      (0, 0) circle [x radius= 3.35, y radius= 3.35]   ;
\draw    (250.26,84.07) -- (249.59,36.07) ;
\draw [shift={(249.59,36.07)}, rotate = 269.2] [color={rgb, 255:red, 0; green, 0; blue, 0 }  ][fill={rgb, 255:red, 0; green, 0; blue, 0 }  ][line width=0.75]      (0, 0) circle [x radius= 3.35, y radius= 3.35]   ;
\draw [shift={(250.26,84.07)}, rotate = 269.2] [color={rgb, 255:red, 0; green, 0; blue, 0 }  ][fill={rgb, 255:red, 0; green, 0; blue, 0 }  ][line width=0.75]      (0, 0) circle [x radius= 3.35, y radius= 3.35]   ;
\draw    (250.26,84.07) -- (332.06,84.07) ;
\draw [shift={(332.06,84.07)}, rotate = 0] [color={rgb, 255:red, 0; green, 0; blue, 0 }  ][fill={rgb, 255:red, 0; green, 0; blue, 0 }  ][line width=0.75]      (0, 0) circle [x radius= 3.35, y radius= 3.35]   ;
\draw [shift={(250.26,84.07)}, rotate = 0] [color={rgb, 255:red, 0; green, 0; blue, 0 }  ][fill={rgb, 255:red, 0; green, 0; blue, 0 }  ][line width=0.75]      (0, 0) circle [x radius= 3.35, y radius= 3.35]   ;
\draw    (332.06,84.07) -- (331.39,36.07) ;
\draw [shift={(331.39,36.07)}, rotate = 269.2] [color={rgb, 255:red, 0; green, 0; blue, 0 }  ][fill={rgb, 255:red, 0; green, 0; blue, 0 }  ][line width=0.75]      (0, 0) circle [x radius= 3.35, y radius= 3.35]   ;
\draw [shift={(332.06,84.07)}, rotate = 269.2] [color={rgb, 255:red, 0; green, 0; blue, 0 }  ][fill={rgb, 255:red, 0; green, 0; blue, 0 }  ][line width=0.75]      (0, 0) circle [x radius= 3.35, y radius= 3.35]   ;
\draw    (332.06,84.07) -- (413.85,84.07) ;
\draw [shift={(413.85,84.07)}, rotate = 360] [color={rgb, 255:red, 0; green, 0; blue, 0 }  ][fill={rgb, 255:red, 0; green, 0; blue, 0 }  ][line width=0.75]      (0, 0) circle [x radius= 3.35, y radius= 3.35]   ;
\draw [shift={(332.06,84.07)}, rotate = 360] [color={rgb, 255:red, 0; green, 0; blue, 0 }  ][fill={rgb, 255:red, 0; green, 0; blue, 0 }  ][line width=0.75]      (0, 0) circle [x radius= 3.35, y radius= 3.35]   ;
\draw    (413.85,84.07) -- (413.19,36.07) ;
\draw [shift={(413.19,36.07)}, rotate = 269.2] [color={rgb, 255:red, 0; green, 0; blue, 0 }  ][fill={rgb, 255:red, 0; green, 0; blue, 0 }  ][line width=0.75]      (0, 0) circle [x radius= 3.35, y radius= 3.35]   ;
\draw [shift={(413.85,84.07)}, rotate = 269.2] [color={rgb, 255:red, 0; green, 0; blue, 0 }  ][fill={rgb, 255:red, 0; green, 0; blue, 0 }  ][line width=0.75]      (0, 0) circle [x radius= 3.35, y radius= 3.35]   ;
\draw    (413.85,84.07) -- (495.65,84.07) ;
\draw [shift={(495.65,84.07)}, rotate = 0] [color={rgb, 255:red, 0; green, 0; blue, 0 }  ][fill={rgb, 255:red, 0; green, 0; blue, 0 }  ][line width=0.75]      (0, 0) circle [x radius= 3.35, y radius= 3.35]   ;
\draw [shift={(413.85,84.07)}, rotate = 0] [color={rgb, 255:red, 0; green, 0; blue, 0 }  ][fill={rgb, 255:red, 0; green, 0; blue, 0 }  ][line width=0.75]      (0, 0) circle [x radius= 3.35, y radius= 3.35]   ;
\draw    (494.98,36.07) -- (495.65,84.07) ;
\draw [shift={(495.65,84.07)}, rotate = 89.2] [color={rgb, 255:red, 0; green, 0; blue, 0 }  ][fill={rgb, 255:red, 0; green, 0; blue, 0 }  ][line width=0.75]      (0, 0) circle [x radius= 3.35, y radius= 3.35]   ;
\draw [shift={(494.98,36.07)}, rotate = 89.2] [color={rgb, 255:red, 0; green, 0; blue, 0 }  ][fill={rgb, 255:red, 0; green, 0; blue, 0 }  ][line width=0.75]      (0, 0) circle [x radius= 3.35, y radius= 3.35]   ;
\draw    (495.65,84.07) -- (574.45,84.07) ;
\draw [shift={(577.45,84.07)}, rotate = 180] [fill={rgb, 255:red, 0; green, 0; blue, 0 }  ][line width=0.08]  [draw opacity=0] (8.93,-4.29) -- (0,0) -- (8.93,4.29) -- cycle    ;
\draw [shift={(495.65,84.07)}, rotate = 0] [color={rgb, 255:red, 0; green, 0; blue, 0 }  ][fill={rgb, 255:red, 0; green, 0; blue, 0 }  ][line width=0.75]      (0, 0) circle [x radius= 3.35, y radius= 3.35]   ;

\draw (323,12.9) node [anchor=north west][inner sep=0.75pt]    {$x_{0}$};
\draw (404.5,11.9) node [anchor=north west][inner sep=0.75pt]    {$x_{1}$};
\draw (486.5,13.9) node [anchor=north west][inner sep=0.75pt]    {$x_{2}$};
\draw (239,12.4) node [anchor=north west][inner sep=0.75pt]    {$x_{-1}$};
\draw (157.5,12.9) node [anchor=north west][inner sep=0.75pt]    {$x_{-2}$};
\draw (324,88.4) node [anchor=north west][inner sep=0.75pt]    {$y_{0}$};
\draw (407,88.4) node [anchor=north west][inner sep=0.75pt]    {$y_{1}$};
\draw (487,88.4) node [anchor=north west][inner sep=0.75pt]    {$y_{2}$};
\draw (240,87.9) node [anchor=north west][inner sep=0.75pt]    {$y_{-1}$};
\draw (157,86.4) node [anchor=north west][inner sep=0.75pt]    {$y_{-2}$};

\end{tikzpicture}
        \caption{Double ladder graph}
        \label{fig6}
    \end{figure}
    Consider $L'$ the graph obtained from $L$ by subdividing each edge $x_ix_{i+1}$, that is, for each edge $x_ix_{i+1}\in E(L)$, we add a vertex $w_{i+1}$. Now, we define $H$ as follows (see Figure~\ref{fig7}): $V(H)=V(L')\cup \bigcup_{i\in\mathbb{Z}} V(G_i)$, recalling that each $G_i$ is a copy of $G'$. The edge set is determined as follows:

    $$E(H)= E(L')\cup \left(\bigcup_{i\in\mathbb{Z}} E(G_i)\right)\cup \lbrace u_iw_{2i}, v_iw_{2i+1}: i\in\mathbb{Z}\rbrace.$$

    \begin{figure}[ht]
        \centering

\tikzset{every picture/.style={line width=0.75pt}} 

\begin{tikzpicture}[x=0.75pt,y=0.75pt,yscale=-1,xscale=1]

\draw   (115.56,43.93) .. controls (115.56,22.43) and (133.88,5) .. (156.46,5) .. controls (179.05,5) and (197.36,22.43) .. (197.36,43.93) .. controls (197.36,65.43) and (179.05,82.86) .. (156.46,82.86) .. controls (133.88,82.86) and (115.56,65.43) .. (115.56,43.93) -- cycle ;
\draw    (115.56,44.74) -- (115.9,125.07) ;
\draw [shift={(115.9,125.07)}, rotate = 89.76] [color={rgb, 255:red, 0; green, 0; blue, 0 }  ][fill={rgb, 255:red, 0; green, 0; blue, 0 }  ][line width=0.75]      (0, 0) circle [x radius= 3.35, y radius= 3.35]   ;
\draw [shift={(115.56,44.74)}, rotate = 89.76] [color={rgb, 255:red, 0; green, 0; blue, 0 }  ][fill={rgb, 255:red, 0; green, 0; blue, 0 }  ][line width=0.75]      (0, 0) circle [x radius= 3.35, y radius= 3.35]   ;
\draw  [dash pattern={on 0.84pt off 2.51pt}]  (72.33,101.12) -- (94,102) ;
\draw  [dash pattern={on 0.84pt off 2.51pt}]  (552,93) -- (571.33,93.46) ;
\draw    (78,125.07) -- (115.9,125.07) ;
\draw [shift={(115.9,125.07)}, rotate = 0] [color={rgb, 255:red, 0; green, 0; blue, 0 }  ][fill={rgb, 255:red, 0; green, 0; blue, 0 }  ][line width=0.75]      (0, 0) circle [x radius= 3.35, y radius= 3.35]   ;
\draw [shift={(75,125.07)}, rotate = 0] [fill={rgb, 255:red, 0; green, 0; blue, 0 }  ][line width=0.08]  [draw opacity=0] (8.93,-4.29) -- (0,0) -- (8.93,4.29) -- cycle    ;
\draw    (115.9,125.07) -- (156.8,125.07) ;
\draw [shift={(156.8,125.07)}, rotate = 0] [color={rgb, 255:red, 0; green, 0; blue, 0 }  ][fill={rgb, 255:red, 0; green, 0; blue, 0 }  ][line width=0.75]      (0, 0) circle [x radius= 3.35, y radius= 3.35]   ;
\draw [shift={(115.9,125.07)}, rotate = 0] [color={rgb, 255:red, 0; green, 0; blue, 0 }  ][fill={rgb, 255:red, 0; green, 0; blue, 0 }  ][line width=0.75]      (0, 0) circle [x radius= 3.35, y radius= 3.35]   ;
\draw    (156.8,125.07) -- (197.69,125.07) ;
\draw [shift={(197.69,125.07)}, rotate = 360] [color={rgb, 255:red, 0; green, 0; blue, 0 }  ][fill={rgb, 255:red, 0; green, 0; blue, 0 }  ][line width=0.75]      (0, 0) circle [x radius= 3.35, y radius= 3.35]   ;
\draw [shift={(156.8,125.07)}, rotate = 360] [color={rgb, 255:red, 0; green, 0; blue, 0 }  ][fill={rgb, 255:red, 0; green, 0; blue, 0 }  ][line width=0.75]      (0, 0) circle [x radius= 3.35, y radius= 3.35]   ;
\draw    (197.69,125.07) -- (238.59,125.07) ;
\draw [shift={(238.59,125.07)}, rotate = 0] [color={rgb, 255:red, 0; green, 0; blue, 0 }  ][fill={rgb, 255:red, 0; green, 0; blue, 0 }  ][line width=0.75]      (0, 0) circle [x radius= 3.35, y radius= 3.35]   ;
\draw [shift={(197.69,125.07)}, rotate = 0] [color={rgb, 255:red, 0; green, 0; blue, 0 }  ][fill={rgb, 255:red, 0; green, 0; blue, 0 }  ][line width=0.75]      (0, 0) circle [x radius= 3.35, y radius= 3.35]   ;
\draw    (238.59,125.07) -- (279.49,125.07) ;
\draw [shift={(279.49,125.07)}, rotate = 0] [color={rgb, 255:red, 0; green, 0; blue, 0 }  ][fill={rgb, 255:red, 0; green, 0; blue, 0 }  ][line width=0.75]      (0, 0) circle [x radius= 3.35, y radius= 3.35]   ;
\draw [shift={(238.59,125.07)}, rotate = 0] [color={rgb, 255:red, 0; green, 0; blue, 0 }  ][fill={rgb, 255:red, 0; green, 0; blue, 0 }  ][line width=0.75]      (0, 0) circle [x radius= 3.35, y radius= 3.35]   ;
\draw    (279.49,125.07) -- (320.39,125.07) ;
\draw [shift={(320.39,125.07)}, rotate = 0] [color={rgb, 255:red, 0; green, 0; blue, 0 }  ][fill={rgb, 255:red, 0; green, 0; blue, 0 }  ][line width=0.75]      (0, 0) circle [x radius= 3.35, y radius= 3.35]   ;
\draw [shift={(279.49,125.07)}, rotate = 0] [color={rgb, 255:red, 0; green, 0; blue, 0 }  ][fill={rgb, 255:red, 0; green, 0; blue, 0 }  ][line width=0.75]      (0, 0) circle [x radius= 3.35, y radius= 3.35]   ;
\draw    (320.39,125.07) -- (361.29,125.07) ;
\draw [shift={(361.29,125.07)}, rotate = 0] [color={rgb, 255:red, 0; green, 0; blue, 0 }  ][fill={rgb, 255:red, 0; green, 0; blue, 0 }  ][line width=0.75]      (0, 0) circle [x radius= 3.35, y radius= 3.35]   ;
\draw [shift={(320.39,125.07)}, rotate = 0] [color={rgb, 255:red, 0; green, 0; blue, 0 }  ][fill={rgb, 255:red, 0; green, 0; blue, 0 }  ][line width=0.75]      (0, 0) circle [x radius= 3.35, y radius= 3.35]   ;
\draw    (361.29,125.07) -- (402.19,125.07) ;
\draw [shift={(402.19,125.07)}, rotate = 360] [color={rgb, 255:red, 0; green, 0; blue, 0 }  ][fill={rgb, 255:red, 0; green, 0; blue, 0 }  ][line width=0.75]      (0, 0) circle [x radius= 3.35, y radius= 3.35]   ;
\draw [shift={(361.29,125.07)}, rotate = 360] [color={rgb, 255:red, 0; green, 0; blue, 0 }  ][fill={rgb, 255:red, 0; green, 0; blue, 0 }  ][line width=0.75]      (0, 0) circle [x radius= 3.35, y radius= 3.35]   ;
\draw    (402.19,125.07) -- (443.08,125.07) ;
\draw [shift={(443.08,125.07)}, rotate = 0] [color={rgb, 255:red, 0; green, 0; blue, 0 }  ][fill={rgb, 255:red, 0; green, 0; blue, 0 }  ][line width=0.75]      (0, 0) circle [x radius= 3.35, y radius= 3.35]   ;
\draw [shift={(402.19,125.07)}, rotate = 0] [color={rgb, 255:red, 0; green, 0; blue, 0 }  ][fill={rgb, 255:red, 0; green, 0; blue, 0 }  ][line width=0.75]      (0, 0) circle [x radius= 3.35, y radius= 3.35]   ;
\draw    (443.08,125.07) -- (483.98,125.07) ;
\draw [shift={(483.98,125.07)}, rotate = 360] [color={rgb, 255:red, 0; green, 0; blue, 0 }  ][fill={rgb, 255:red, 0; green, 0; blue, 0 }  ][line width=0.75]      (0, 0) circle [x radius= 3.35, y radius= 3.35]   ;
\draw [shift={(443.08,125.07)}, rotate = 360] [color={rgb, 255:red, 0; green, 0; blue, 0 }  ][fill={rgb, 255:red, 0; green, 0; blue, 0 }  ][line width=0.75]      (0, 0) circle [x radius= 3.35, y radius= 3.35]   ;
\draw    (483.98,125.07) -- (524.88,125.07) ;
\draw [shift={(524.88,125.07)}, rotate = 0] [color={rgb, 255:red, 0; green, 0; blue, 0 }  ][fill={rgb, 255:red, 0; green, 0; blue, 0 }  ][line width=0.75]      (0, 0) circle [x radius= 3.35, y radius= 3.35]   ;
\draw [shift={(483.98,125.07)}, rotate = 0] [color={rgb, 255:red, 0; green, 0; blue, 0 }  ][fill={rgb, 255:red, 0; green, 0; blue, 0 }  ][line width=0.75]      (0, 0) circle [x radius= 3.35, y radius= 3.35]   ;
\draw    (524.88,125.07) -- (562.78,125.07) ;
\draw [shift={(565.78,125.07)}, rotate = 180] [fill={rgb, 255:red, 0; green, 0; blue, 0 }  ][line width=0.08]  [draw opacity=0] (8.93,-4.29) -- (0,0) -- (8.93,4.29) -- cycle    ;
\draw [shift={(524.88,125.07)}, rotate = 0] [color={rgb, 255:red, 0; green, 0; blue, 0 }  ][fill={rgb, 255:red, 0; green, 0; blue, 0 }  ][line width=0.75]      (0, 0) circle [x radius= 3.35, y radius= 3.35]   ;
\draw    (78.67,173.07) -- (157.46,173.07) ;
\draw [shift={(157.46,173.07)}, rotate = 0] [color={rgb, 255:red, 0; green, 0; blue, 0 }  ][fill={rgb, 255:red, 0; green, 0; blue, 0 }  ][line width=0.75]      (0, 0) circle [x radius= 3.35, y radius= 3.35]   ;
\draw [shift={(75.67,173.07)}, rotate = 0] [fill={rgb, 255:red, 0; green, 0; blue, 0 }  ][line width=0.08]  [draw opacity=0] (8.93,-4.29) -- (0,0) -- (8.93,4.29) -- cycle    ;
\draw    (157.46,173.07) -- (156.8,125.07) ;
\draw [shift={(156.8,125.07)}, rotate = 269.2] [color={rgb, 255:red, 0; green, 0; blue, 0 }  ][fill={rgb, 255:red, 0; green, 0; blue, 0 }  ][line width=0.75]      (0, 0) circle [x radius= 3.35, y radius= 3.35]   ;
\draw [shift={(157.46,173.07)}, rotate = 269.2] [color={rgb, 255:red, 0; green, 0; blue, 0 }  ][fill={rgb, 255:red, 0; green, 0; blue, 0 }  ][line width=0.75]      (0, 0) circle [x radius= 3.35, y radius= 3.35]   ;
\draw    (157.46,173.07) -- (239.26,173.07) ;
\draw [shift={(239.26,173.07)}, rotate = 0] [color={rgb, 255:red, 0; green, 0; blue, 0 }  ][fill={rgb, 255:red, 0; green, 0; blue, 0 }  ][line width=0.75]      (0, 0) circle [x radius= 3.35, y radius= 3.35]   ;
\draw [shift={(157.46,173.07)}, rotate = 0] [color={rgb, 255:red, 0; green, 0; blue, 0 }  ][fill={rgb, 255:red, 0; green, 0; blue, 0 }  ][line width=0.75]      (0, 0) circle [x radius= 3.35, y radius= 3.35]   ;
\draw    (239.26,173.07) -- (238.59,125.07) ;
\draw [shift={(238.59,125.07)}, rotate = 269.2] [color={rgb, 255:red, 0; green, 0; blue, 0 }  ][fill={rgb, 255:red, 0; green, 0; blue, 0 }  ][line width=0.75]      (0, 0) circle [x radius= 3.35, y radius= 3.35]   ;
\draw [shift={(239.26,173.07)}, rotate = 269.2] [color={rgb, 255:red, 0; green, 0; blue, 0 }  ][fill={rgb, 255:red, 0; green, 0; blue, 0 }  ][line width=0.75]      (0, 0) circle [x radius= 3.35, y radius= 3.35]   ;
\draw    (239.26,173.07) -- (321.06,173.07) ;
\draw [shift={(321.06,173.07)}, rotate = 0] [color={rgb, 255:red, 0; green, 0; blue, 0 }  ][fill={rgb, 255:red, 0; green, 0; blue, 0 }  ][line width=0.75]      (0, 0) circle [x radius= 3.35, y radius= 3.35]   ;
\draw [shift={(239.26,173.07)}, rotate = 0] [color={rgb, 255:red, 0; green, 0; blue, 0 }  ][fill={rgb, 255:red, 0; green, 0; blue, 0 }  ][line width=0.75]      (0, 0) circle [x radius= 3.35, y radius= 3.35]   ;
\draw    (321.06,173.07) -- (320.39,125.07) ;
\draw [shift={(320.39,125.07)}, rotate = 269.2] [color={rgb, 255:red, 0; green, 0; blue, 0 }  ][fill={rgb, 255:red, 0; green, 0; blue, 0 }  ][line width=0.75]      (0, 0) circle [x radius= 3.35, y radius= 3.35]   ;
\draw [shift={(321.06,173.07)}, rotate = 269.2] [color={rgb, 255:red, 0; green, 0; blue, 0 }  ][fill={rgb, 255:red, 0; green, 0; blue, 0 }  ][line width=0.75]      (0, 0) circle [x radius= 3.35, y radius= 3.35]   ;
\draw    (321.06,173.07) -- (402.85,173.07) ;
\draw [shift={(402.85,173.07)}, rotate = 360] [color={rgb, 255:red, 0; green, 0; blue, 0 }  ][fill={rgb, 255:red, 0; green, 0; blue, 0 }  ][line width=0.75]      (0, 0) circle [x radius= 3.35, y radius= 3.35]   ;
\draw [shift={(321.06,173.07)}, rotate = 360] [color={rgb, 255:red, 0; green, 0; blue, 0 }  ][fill={rgb, 255:red, 0; green, 0; blue, 0 }  ][line width=0.75]      (0, 0) circle [x radius= 3.35, y radius= 3.35]   ;
\draw    (402.85,173.07) -- (402.19,125.07) ;
\draw [shift={(402.19,125.07)}, rotate = 269.2] [color={rgb, 255:red, 0; green, 0; blue, 0 }  ][fill={rgb, 255:red, 0; green, 0; blue, 0 }  ][line width=0.75]      (0, 0) circle [x radius= 3.35, y radius= 3.35]   ;
\draw [shift={(402.85,173.07)}, rotate = 269.2] [color={rgb, 255:red, 0; green, 0; blue, 0 }  ][fill={rgb, 255:red, 0; green, 0; blue, 0 }  ][line width=0.75]      (0, 0) circle [x radius= 3.35, y radius= 3.35]   ;
\draw    (402.85,173.07) -- (484.65,173.07) ;
\draw [shift={(484.65,173.07)}, rotate = 0] [color={rgb, 255:red, 0; green, 0; blue, 0 }  ][fill={rgb, 255:red, 0; green, 0; blue, 0 }  ][line width=0.75]      (0, 0) circle [x radius= 3.35, y radius= 3.35]   ;
\draw [shift={(402.85,173.07)}, rotate = 0] [color={rgb, 255:red, 0; green, 0; blue, 0 }  ][fill={rgb, 255:red, 0; green, 0; blue, 0 }  ][line width=0.75]      (0, 0) circle [x radius= 3.35, y radius= 3.35]   ;
\draw    (483.98,125.07) -- (484.65,173.07) ;
\draw [shift={(484.65,173.07)}, rotate = 89.2] [color={rgb, 255:red, 0; green, 0; blue, 0 }  ][fill={rgb, 255:red, 0; green, 0; blue, 0 }  ][line width=0.75]      (0, 0) circle [x radius= 3.35, y radius= 3.35]   ;
\draw [shift={(483.98,125.07)}, rotate = 89.2] [color={rgb, 255:red, 0; green, 0; blue, 0 }  ][fill={rgb, 255:red, 0; green, 0; blue, 0 }  ][line width=0.75]      (0, 0) circle [x radius= 3.35, y radius= 3.35]   ;
\draw    (484.65,173.07) -- (563.45,173.07) ;
\draw [shift={(566.45,173.07)}, rotate = 180] [fill={rgb, 255:red, 0; green, 0; blue, 0 }  ][line width=0.08]  [draw opacity=0] (8.93,-4.29) -- (0,0) -- (8.93,4.29) -- cycle    ;
\draw [shift={(484.65,173.07)}, rotate = 0] [color={rgb, 255:red, 0; green, 0; blue, 0 }  ][fill={rgb, 255:red, 0; green, 0; blue, 0 }  ][line width=0.75]      (0, 0) circle [x radius= 3.35, y radius= 3.35]   ;
\draw    (197.36,44.74) -- (197.69,125.07) ;
\draw [shift={(197.69,125.07)}, rotate = 89.76] [color={rgb, 255:red, 0; green, 0; blue, 0 }  ][fill={rgb, 255:red, 0; green, 0; blue, 0 }  ][line width=0.75]      (0, 0) circle [x radius= 3.35, y radius= 3.35]   ;
\draw [shift={(197.36,44.74)}, rotate = 89.76] [color={rgb, 255:red, 0; green, 0; blue, 0 }  ][fill={rgb, 255:red, 0; green, 0; blue, 0 }  ][line width=0.75]      (0, 0) circle [x radius= 3.35, y radius= 3.35]   ;
\draw    (279.16,44.74) -- (279.49,125.07) ;
\draw [shift={(279.49,125.07)}, rotate = 89.76] [color={rgb, 255:red, 0; green, 0; blue, 0 }  ][fill={rgb, 255:red, 0; green, 0; blue, 0 }  ][line width=0.75]      (0, 0) circle [x radius= 3.35, y radius= 3.35]   ;
\draw [shift={(279.16,44.74)}, rotate = 89.76] [color={rgb, 255:red, 0; green, 0; blue, 0 }  ][fill={rgb, 255:red, 0; green, 0; blue, 0 }  ][line width=0.75]      (0, 0) circle [x radius= 3.35, y radius= 3.35]   ;
\draw    (360.95,44.74) -- (361.29,125.07) ;
\draw [shift={(361.29,125.07)}, rotate = 89.76] [color={rgb, 255:red, 0; green, 0; blue, 0 }  ][fill={rgb, 255:red, 0; green, 0; blue, 0 }  ][line width=0.75]      (0, 0) circle [x radius= 3.35, y radius= 3.35]   ;
\draw [shift={(360.95,44.74)}, rotate = 89.76] [color={rgb, 255:red, 0; green, 0; blue, 0 }  ][fill={rgb, 255:red, 0; green, 0; blue, 0 }  ][line width=0.75]      (0, 0) circle [x radius= 3.35, y radius= 3.35]   ;
\draw    (442.75,44.74) -- (443.08,125.07) ;
\draw [shift={(443.08,125.07)}, rotate = 89.76] [color={rgb, 255:red, 0; green, 0; blue, 0 }  ][fill={rgb, 255:red, 0; green, 0; blue, 0 }  ][line width=0.75]      (0, 0) circle [x radius= 3.35, y radius= 3.35]   ;
\draw [shift={(442.75,44.74)}, rotate = 89.76] [color={rgb, 255:red, 0; green, 0; blue, 0 }  ][fill={rgb, 255:red, 0; green, 0; blue, 0 }  ][line width=0.75]      (0, 0) circle [x radius= 3.35, y radius= 3.35]   ;
\draw    (524.55,44.74) -- (524.88,125.07) ;
\draw [shift={(524.88,125.07)}, rotate = 89.76] [color={rgb, 255:red, 0; green, 0; blue, 0 }  ][fill={rgb, 255:red, 0; green, 0; blue, 0 }  ][line width=0.75]      (0, 0) circle [x radius= 3.35, y radius= 3.35]   ;
\draw [shift={(524.55,44.74)}, rotate = 89.76] [color={rgb, 255:red, 0; green, 0; blue, 0 }  ][fill={rgb, 255:red, 0; green, 0; blue, 0 }  ][line width=0.75]      (0, 0) circle [x radius= 3.35, y radius= 3.35]   ;
\draw   (279.16,44.74) .. controls (279.16,23.24) and (297.47,5.81) .. (320.06,5.81) .. controls (342.64,5.81) and (360.95,23.24) .. (360.95,44.74) .. controls (360.95,66.24) and (342.64,83.67) .. (320.06,83.67) .. controls (297.47,83.67) and (279.16,66.24) .. (279.16,44.74) -- cycle ;
\draw   (442.75,44.74) .. controls (442.75,23.24) and (461.06,5.81) .. (483.65,5.81) .. controls (506.24,5.81) and (524.55,23.24) .. (524.55,44.74) .. controls (524.55,66.24) and (506.24,83.67) .. (483.65,83.67) .. controls (461.06,83.67) and (442.75,66.24) .. (442.75,44.74) -- cycle ;

\draw (84.57,34.52) node [anchor=north west][inner sep=0.75pt]  [font=\small]  {$u_{-1}$};
\draw (203.17,33.72) node [anchor=north west][inner sep=0.75pt]    {$v_{-1}$};
\draw (254.83,33.58) node [anchor=north west][inner sep=0.75pt]    {$u_{0}$};
\draw (368.05,34.38) node [anchor=north west][inner sep=0.75pt]    {$v_{0}$};
\draw (420.15,32.91) node [anchor=north west][inner sep=0.75pt]    {$u_{1}$};
\draw (532.7,33.72) node [anchor=north west][inner sep=0.75pt]    {$v_{1}$};
\draw (144.32,29.34) node [anchor=north west][inner sep=0.75pt]    {$G_{-1}$};
\draw (311.98,29.87) node [anchor=north west][inner sep=0.75pt]    {$G_{0}$};
\draw (476.39,30.4) node [anchor=north west][inner sep=0.75pt]    {$G_{1}$};
\draw (312,101.9) node [anchor=north west][inner sep=0.75pt]    {$x_{0}$};
\draw (393.5,100.9) node [anchor=north west][inner sep=0.75pt]    {$x_{1}$};
\draw (475.5,102.9) node [anchor=north west][inner sep=0.75pt]    {$x_{2}$};
\draw (228,101.4) node [anchor=north west][inner sep=0.75pt]    {$x_{-1}$};
\draw (146.5,101.9) node [anchor=north west][inner sep=0.75pt]    {$x_{-2}$};
\draw (313.67,180.07) node [anchor=north west][inner sep=0.75pt]    {$y_{0}$};
\draw (395.33,179.4) node [anchor=north west][inner sep=0.75pt]    {$y_{1}$};
\draw (476,180.07) node [anchor=north west][inner sep=0.75pt]    {$y_{2}$};
\draw (227.67,180.23) node [anchor=north west][inner sep=0.75pt]    {$y_{-1}$};
\draw (145.33,179.4) node [anchor=north west][inner sep=0.75pt]    {$y_{-2}$};
\draw (351,130.73) node [anchor=north west][inner sep=0.75pt]    {$w_{1}$};
\draw (270,131.57) node [anchor=north west][inner sep=0.75pt]    {$w_{0}$};
\draw (184.67,129.9) node [anchor=north west][inner sep=0.75pt]    {$w_{-1}$};
\draw (102.33,130.73) node [anchor=north west][inner sep=0.75pt]    {$w_{-2}$};
\draw (433,130.57) node [anchor=north west][inner sep=0.75pt]    {$w_{2}$};
\draw (515.83,131.07) node [anchor=north west][inner sep=0.75pt]    {$w_{3}$};

\end{tikzpicture}
        \caption{Double ladder with an amalgamation of infinitely many copies of $G$}
        \label{fig7}
    \end{figure}

Note that $H$ is cubic and infinite. Furthermore, since each $G_i$ and $L'$ are bridgeless, it follows by construction that $H$ is bridgeless. Now, if $M \subset E(H)$ is a perfect matching of $H$, since $S=\{u_0w_0, v_0w_1\}$ is an edge cut of $H$ and one of the components of $H-S$ is a finite subgraph, the parity of $|M \cap S|$ is equal to the parity of $|V(K)|$, where $K$ is the finite component of $H-S$. Since $H$ is cubic and $S$ has size 2, a parity argument implies that $|M \cap S|$ must be even. Thus, either $S \subset M$ or $S \cap M = \emptyset$. In this way, the perfect matching $M$ induces a perfect matching on $G = G_0 \cup \{u_0v_0\}$. Indeed, if $S \subset M$, let $M' = (M \cap E(G_0)) \cup \{u_0v_0\}$; if $M \cap S = \emptyset$, let $M' = M \cap E(G_0)$. In either case, $M'$ is a perfect matching of $G$.

Now we prove that if $H$ has a $(k,p)$-PM cover, then $G$ also does. Indeed, if $\mathcal{M}=\{M_1, \dots, M_k\}$ is a $(k,p)$-PM cover of $H$, we obtain a collection of perfect matchings $\mathcal{M}'=\{M_1', \dots, M_k'\}$ of $G$. Let $e \in E(G)$. If $e \neq uv$, then $e \in E(H)$. Since $\mathcal{M}$ is a $(k,p)$-PM cover of $H$, the edge $e$ belongs to at most $p$ perfect matchings in the collection $\mathcal{M}$. Since $M_i'$ and $M_i$ agree on all edges of $G_0 = G \setminus \{uv\}$, if $e \neq uv$, then $e$ belongs to at most $p$ matchings in the collection $\mathcal{M}'$. If $e = uv$, since $\mathcal{M}$ is a $(k,p)$-PM cover of $H$, there are at most $p$ matchings in $\mathcal{M}$ containing $S = \{u_0w_0, v_0w_1\}$. Thus, by the definition of $M'_i$, there are at most $p$ matchings in $\mathcal{M}'$ containing $e = uv$. Therefore, $G$ has a $(k,p)$-PM cover. Analogously, it can be proven that if $H$ has a $(k,p)^*$-PM cover, then $G$ also does. 
\end{proof}

\section{Perfect matching theorems}\label{section3}
\paragraph{}

In this section, we prove the main theorems of the paper, using the lemmas from the previous section. We begin with the following general theorem, which we present in terms of PM-covers and will apply to the particular cases of Conjectures~\ref{conjecture1} and~\ref{conjecture2}.

\begin{thm}\label{thm5}
    Let $G$ be an infinite, cubic, bridgeless, connected graph, and let $p,k \in \mathbb{N}_{>0}$ with $p \leq k$. If every finite, cubic, bridgeless, connected graph has a $(k,p)$-PM cover ($(k,p)^*$-PM cover), then $G$ also has a $(k,p)$-PM cover ($(k,p)^*$-PM cover).
\end{thm}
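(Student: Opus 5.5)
We will combine the approximation of Lemma~\ref{thm1} with the limit argument of Lemma~\ref{lemma2}, applied coordinatewise to the $k$ matchings of a cover. First, Lemma~\ref{thm1} gives a sequence $\langle H_i\rangle_{i\in\mathbb{N}}$ of finite, cubic, bridgeless graphs whose $F$-limit is $G$, for a fixed non-principal ultrafilter $F$. The hypothesis is stated for \emph{connected} finite graphs, while an $H_i$ need not be connected a priori. We will either observe that $H_i$ is connected: $G_{m_i}$ is connected, being built by ears, and contracting paths and replacing double edges by the gadget preserves connectivity. Alternatively, we will apply the hypothesis to each component separately and take the union, which yields a cover of $H_i$ with the same parameters. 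In either case, for each $i$ we fix perfect matchings $M_i^1,\dots,M_i^k$ of $H_i$ forming a $(k,p)$-PM cover (respectively a $(k,p)^*$-PM cover).

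For each $j\in\{1,\dots,k\}$, let $M^j$ be the $F$-limit of $\langle M_i^j\rangle_{i\in\mathbb{N}}$. By Lemma~\ref{lemma2}, each $M^j$ is a perfect matching of $G$. It remains to transfer the counting condition edge by edge. Fix $e\in E(G)$. As in the proof of Lemma~\ref{thm1}, the set $A_e=\{i : e\in E(H_i)\}$ is cofinite, hence in $F$. For each subset $J\subseteq\{1,\dots,k\}$, let
\[
B_J=\{\, i\in A_e : \{j : e\in M_i^j\}=J \,\}.
\]
The sets $B_J$ are finitely many ($2^k$), pairwise disjoint, and their union is $A_e\in F$. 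By Proposition~\ref{ultrafilter}, exactly one of them, say $B_{J_0}$, lies in $F$.

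We then show that $\{j : e\in M^j\}=J_0$. If $j\in J_0$, then $\{i : e\in M_i^j\}\supseteq B_{J_0}\in F$, so $e\in M^j$. If $j\notin J_0$, then $\{i : e\in M_i^j\}$ is disjoint from $B_{J_0}$. If it were in $F$, its intersection with $B_{J_0}$ would be empty yet belong to $F$, a contradiction; hence $e\notin M^j$. Since every $i\in B_{J_0}$ (nonempty, as $\emptyset\notin F$) satisfies $|J_0|\le p$ (respectively $|J_0|\ge p$) by the cover property of $H_i$, the edge $e$ lies in at most $p$ (respectively at least $p$) of the $M^j$. Thus $\{M^1,\dots,M^k\}$ is the required cover of $G$. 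If several $M^j$ coincide, the collection is still understood as an indexed family of $k$ matchings, consistent with the definition.

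The only delicate point is the one handled by the partition into the $B_J$. Taking $F$-limits of each matching separately does not by itself control joint membership. The finite partition together with the ultrafilter property of Proposition~\ref{ultrafilter} is what makes the count pass to the limit. A secondary obstacle is the connectivity of $H_i$, dealt with in the first step.
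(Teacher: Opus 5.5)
Your proof is correct and follows the same route as the paper: approximate $G$ by the graphs $H_i$ of Lemma~\ref{thm1}, take coordinatewise $F$-limits of the $k$ matchings, apply Lemma~\ref{lemma2}, and transfer the counting condition through the ultrafilter property. The only real difference is in the counting step, and it is a modest improvement. You partition the cofinite set $A_e$ into the $2^k$ membership patterns $B_J$, which shows the limit family inherits the exact pattern $J_0$ and settles the $(k,p)$ and $(k,p)^*$ cases together; the paper instead runs two separate contradiction arguments, an intersection argument for ``at most $p$'' and a partition by a witness index $s_i$ for ``at least $p$''. Your check that each $H_i$ is connected also covers a point the paper passes over in silence.
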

\begin{proof}
    To prove the theorem, it suffices to consider connected graphs. Let $G$ be an infinite, cubic, and bridgeless graph, and let $F$ be a non-principal ultrafilter over $\mathbb{N}$. Consider $\langle H_i\rangle_{i\in\mathbb{N}}$ a sequence of finite, cubic, and bridgeless graphs as in the proof of Lemma \ref{thm1}. Hence, $G$ is the $F$-limit of $\langle H_i\rangle_{i\in\mathbb{N}}$. Since each $H_i$ is finite, cubic, and bridgeless, by the hypothesis of the theorem, each $H_i$ has a collection of $k$ perfect matchings $M_i^1, M_i^2, \dots, M_i^k$ establishing the respective cover. Now consider $M^j$ to be the $F$-limit of the sequence of perfect matchings $\langle M_i^j\rangle_{i\in\mathbb{N}}$ for each $j\in\{ 1, 2, \dots ,k\}$. By Lemma \ref{lemma2}, each $M^j$ is a perfect matching for $G$.
    
First, we prove the $(k,p)$-PM cover case. Let us assume without loss of generality that the collection witnessing this property for each $H_i$ is the one already fixed; that is, $M_i^1, M_i^2, \dots, M_i^k$ are such that each edge $e\in E(H_i)$ belongs to at most $p$ of these perfect matchings. We will show that the collection of perfect matchings $M^1, M^2, \dots, M^k$ obtained via the $F$-limit forms a $(k,p)$-PM cover. Suppose by contradiction that there exist $e\in E(G)$ and distinct indices $j_0, j_1, \dots, j_{p}\in \{ 1, \dots, k\}$ such that $e\in M^{j_0}\cap M^{j_1}\cap \cdots \cap M^{j_{p}}$. Since each $M^{j_l}$ is the $F$-limit of $\langle M_i^{j_l}\rangle_{i\in\mathbb{N}}$, we have $U^{j_l}=\{ i\in\mathbb{N} : e\in M_i^{j_l}\} \in F$ for each $l\in\{ 0, 1, \dots, p \}$. Since the finite intersection of elements of an ultrafilter belongs to the ultrafilter, it follows that:
\[
\left\{ i\in\mathbb{N} : e\in M_i^{j_0}\cap \cdots \cap M_i^{j_p}\right\} = \bigcap_{l=0}^p U^{j_l}\in F.
\]
Thus, there exists an index $i\in\mathbb{N}$ such that $e\in M_i^{j_0}\cap \cdots \cap M_i^{j_p}$. But this is a contradiction, because $H_i$ has a $(k,p)$-PM cover, and therefore $e$ can belong to at most $p$ matchings in the collection $M_i^1, \dots, M_i^k$. Hence, $G$ has a $(k,p)$-PM cover. 

Now, we prove the $(k,p)^*$-PM cover case. Assume without loss of generality that the fixed collection $M_i^1, M_i^2, \dots, M_i^k$ is such that each edge $e\in E(H_i)$ belongs to at least $p$ of these perfect matchings. Fix $e\in E(G)$. Consider $U^j=\{ i\in\mathbb{N} : e\in M_i^j\}$ for $j\in\{ 1, 2, \dots , k\}$. Since there exists $n\in\mathbb{N}$ such that $e\in E(H_i)$ for all $i\geq n$, we have $\{ i\in\mathbb{N} : i\geq n \} \in F.$ Suppose by contradiction that $e$ belongs to at most $p-1$ perfect matchings from the collection $M^1, \dots, M^k$. Let $j_1, \dots, j_{p-1}\in\{ 1, \dots, k\}$ be these indices, so that $e\in M^{j_1}\cap \cdots \cap M^{j_{p-1}}$. Since each $M^{j_l}$ is the $F$-limit of $\langle M_i^{j_l}\rangle_{i\in\mathbb{N}}$, we have $U^{j_l}=\{ i\in\mathbb{N} : e\in M_i^{j_l}\} \in F$ for each $l\in\{ 1, \dots, p-1 \}$. Because the finite intersection of elements in an ultrafilter is also in the ultrafilter, we obtain:
\[
A = \left\{ i\in\mathbb{N} : e\in M_i^{j_1}\cap \cdots \cap M_i^{j_{p-1}} \text{ and } i \geq n\right\} = \{i \in \mathbb{N} : i \geq n\} \cap \bigcap_{l=1}^{p-1} U^{j_l}\in F.
\]
For each $i\in A$, the edge $e$ belongs to at least $p$ perfect matchings from the collection $M_i^1, \dots, M_i^k$ and, in particular, $e\in M_i^{j_1}\cap \cdots \cap M_i^{j_{p-1}}$. Thus, for each $i\in A$, there exists some other matching $M_i^{s_i} \in \{ M_i^1, \dots , M_i^k\}$ containing $e$, where $s_i \notin \{j_1, \dots , j_{p-1}\}$. Let $B = \{ 1, \dots, k\} \setminus \{ j_1, \dots, j_{p-1}\}$ and define $W^j=\{ i\in A : s_i=j\}$ for each $j\in B$. Consequently, $\bigcup_{j\in B} W^j = A \in F.$ Since $B$ is finite and $F$ is an ultrafilter, there exists an index $j\in B$ such that $W^j\in F$. As $e\in M_i^j$ for all $i\in W^j$, we have $W^j \subset \{ i\in\mathbb{N} : e\in M_i^j\}$, which implies that $\{ i\in\mathbb{N} : e\in M_i^j\} \in F$. Therefore, $e\in M^j$ for $j\in B$. It follows that $e\in M^{j_1}\cap \cdots \cap M^{j_{p-1}}\cap M^j$; that is, $e$ belongs to $p$ perfect matchings from the collection $M^1, \dots, M^k$, which is a contradiction. Therefore, $G$ has a $(k,p)^*$-PM cover. This concludes the proof of the theorem.
\end{proof}

As an application of Theorem~\ref{thm5}, we obtain that for Conjectures~\ref{conjecture1} and~\ref{conjecture2}, the finite case is equivalent to the infinite case.

\begin{thm}\label{thm2}
    Conjecture~\ref{conjecture1} holds for finite graphs if and only if it holds for infinite graphs.
\end{thm}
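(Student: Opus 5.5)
The plan is to read Conjecture~\ref{conjecture1} as the assertion that every bridgeless cubic graph has a $(6,2)$-PM cover that is also a $(6,2)^*$-PM cover. By the observation following the definition of PM covers, a family of six perfect matchings is both a $(6,2)$-PM cover and a $(6,2)^*$-PM cover exactly when each edge lies in exactly two of them. With this translation, both directions should follow from Theorem~\ref{thm5} and Lemma~\ref{lemma3}, once one takes care that a single family witnesses both cover conditions at the same time.

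\textbf{Finite implies infinite.} Let $G$ be an infinite, cubic, bridgeless graph, and first reduce to connected components. Each component is either finite, in which case the finite hypothesis applies directly, or infinite and connected; the union over components of the $j$-th matchings gives the required family for $G$. For an infinite connected $G$, the statement of Theorem~\ref{thm5} gives the two covers separately, which is not enough. I would instead rerun its proof with one fixed family: choose, for each $H_i$ from Lemma~\ref{thm1}, six perfect matchings $M_i^1,\dots,M_i^6$ covering every edge of $H_i$ exactly twice, and take $M^j$ to be the $F$-limit of $\langle M_i^j\rangle_{i\in\mathbb{N}}$. By Lemma~\ref{lemma2}, each $M^j$ is a perfect matching of $G$. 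The first half of the proof of Theorem~\ref{thm5} uses only the ``at most $p$'' property of the fixed family, so each edge of $G$ lies in at most two of the $M^j$. The second half uses only the ``at least $p$'' property of that same family, so each edge lies in at least two. Hence each edge lies in exactly two, and $G$ satisfies Conjecture~\ref{conjecture1}.

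\textbf{Infinite implies finite.} Let $G$ be a finite bridgeless cubic graph, and again reduce to the connected case by combining components. Apply the construction of Lemma~\ref{lemma3} to obtain the infinite, cubic, bridgeless graph $H$, which is connected since each copy $G_i$ is connected and joined to the ladder. By hypothesis, $H$ has six perfect matchings $M_1,\dots,M_6$ covering each edge exactly twice. The construction in the proof of Lemma~\ref{lemma3} produces induced perfect matchings $M_j'$ of $G$. Its counting shows that each edge $e\neq uv$ lies in exactly as many $M_j'$ as $M_j$. The edge $uv$ lies in $M_j'$ exactly when $S=\{u_0w_0,v_0w_1\}\subset M_j$. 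Since $|M_j\cap S|$ is even, $S\subset M_j$ if and only if $u_0w_0\in M_j$, so $uv$ lies in exactly two of the $M_j'$. Thus the same induced family is simultaneously a $(6,2)$- and a $(6,2)^*$-PM cover of $G$.

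\textbf{Main point of care.} The step I expect to need the most attention is the simultaneity just described: Theorem~\ref{thm5} and Lemma~\ref{lemma3} are stated for each type of cover separately. The resolution is to observe that both proofs act on one fixed family and verify the two inequalities independently. Everything else is a direct citation.
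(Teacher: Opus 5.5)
Your proposal is correct and follows the same route as the paper. The finite-to-infinite direction is the $F$-limit argument of Theorem~\ref{thm5} applied to the approximating graphs of Lemma~\ref{thm1}, and the infinite-to-finite direction uses the ladder construction of Lemma~\ref{lemma3} together with the parity of $|M_j\cap S|$.

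The paper's own proof passes over the point you make explicit. It cites Theorem~\ref{thm5} and Lemma~\ref{lemma3}, which are each stated for one type of cover separately, as though they returned a single family satisfying both conditions. Your observation that one fixed family yields both the ``at most two'' and ``at least two'' conclusions closes that gap, as does your direct count for the edge $uv$.
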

\begin{proof}
    First, we will prove that the finite version implies the infinite version. If Conjecture~\ref{conjecture1} holds for finite graphs, then every finite, cubic, and bridgeless graph has a collection of perfect matchings that is simultaneously a $(6,2)$-PM cover and a $(6,2)^*$-PM cover. Therefore, by Theorem \ref{thm5}, every infinite, cubic, and bridgeless graph $G$ possesses a collection of perfect matchings $\mathcal{M}$ that is both a $(6,2)$-PM cover and a $(6,2)^*$-PM cover.
    
Now, we show that if the conjecture holds for infinite graphs, then it also holds for finite graphs. Consider a finite, connected, cubic, and bridgeless graph $G$. Let $H$ be the infinite, connected, cubic, and bridgeless graph constructed in Lemma \ref{lemma3} from $G$. Since we assume that Conjecture \ref{conjecture1} holds for infinite graphs, there exist 6 perfect matchings $M_1, M_2, \dots, M_6$ of $H$ such that each edge of $H$ belongs to exactly two of them. That is, $H$ has a collection of perfect matchings that is simultaneously a $(6,2)$-PM cover and a $(6,2)^*$-PM cover. By Lemma \ref{lemma3}, each $M_i$ induces a perfect matching on $G$, yielding a collection of matchings that is likewise both a $(6,2)$-PM cover and a $(6,2)^*$-PM cover. Therefore, $G$ has 6 perfect matchings where each edge of $G$ is contained in exactly 2 of them. This concludes the proof of the theorem.
    
\end{proof}
\begin{thm}\label{thm3}
    Conjecture~\ref{conjecture2} holds for finite graphs if and only if it holds for infinite graphs.
    
\end{thm}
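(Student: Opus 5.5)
The plan is to mirror the proof of Theorem~\ref{thm2}, replacing the $(6,2)$-covers by $(3,2)$-PM covers. First observe that Conjecture~\ref{conjecture2} for a graph $G$ is equivalent to $G$ having a $(3,2)$-PM cover: three perfect matchings $M_1,M_2,M_3$ with no edge in all three are exactly three perfect matchings in which every edge lies in at most two of them. (Repetitions among the $M_j$ are allowed by the definition of a collection of $k$ perfect matchings; this does not matter, since the conjecture does not require distinctness either.)

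For the direction finite $\Rightarrow$ infinite, assume every finite, cubic, bridgeless graph satisfies Conjecture~\ref{conjecture2}, and hence has a $(3,2)$-PM cover. Let $G$ be an infinite, cubic, bridgeless graph. Each component of $G$ is cubic and bridgeless, and it is either finite or infinite and connected. Finite components are handled directly by the hypothesis. Infinite connected components receive a $(3,2)$-PM cover by Theorem~\ref{thm5} with $k=3$ and $p=2$. The three matchings on each component are then glued together componentwise: labelling them $M^1,M^2,M^3$ on every component and taking the unions gives three perfect matchings of $G$. No edge lies in all three, because each edge lies in a single component, where the $(3,2)$ property holds.

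For the direction infinite $\Rightarrow$ finite, let $G$ be a finite, cubic, bridgeless graph. It again suffices to treat connected $G$ and glue the components as above. Apply Lemma~\ref{lemma3} to obtain the infinite, connected, cubic, bridgeless graph $H$. By hypothesis $H$ satisfies Conjecture~\ref{conjecture2}, so it has a $(3,2)$-PM cover. Lemma~\ref{lemma3} then yields a $(3,2)$-PM cover of $G$, i.e.\ three perfect matchings of $G$ with empty common intersection.

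There is no genuine obstacle here, since all the compactness and parity work was done in Theorem~\ref{thm5} and Lemma~\ref{lemma3}. The only points needing care are:
\begin{itemize}
\item the reformulation ``no edge in all three'' $\Leftrightarrow$ ``each edge in at most two'';
\item the reduction to connected graphs, since Theorem~\ref{thm5} and Lemma~\ref{lemma3} are stated for connected graphs.
\end{itemize}
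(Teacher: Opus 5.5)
Your proposal is correct and matches the paper's proof: recast Conjecture~\ref{conjecture2} as the existence of a $(3,2)$-PM cover, use Theorem~\ref{thm5} with $k=3$, $p=2$ for finite $\Rightarrow$ infinite, and use Lemma~\ref{lemma3} for infinite $\Rightarrow$ finite. Your componentwise gluing spells out the reduction to connected graphs, which the paper only asserts in passing.
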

\begin{proof}
 First, we will prove that the finite version implies the infinite version. If Conjecture~\ref{conjecture2} holds for finite graphs, then every finite, cubic, and bridgeless graph has three perfect matchings $M_1, M_2, M_3$ such that $M_1 \cap M_2 \cap M_3 = \emptyset$. This means that each edge belongs to at most two of the three perfect matchings; that is, $\{M_1, M_2, M_3\}$ is a $(3,2)$-PM cover. Therefore, by Theorem \ref{thm5}, every infinite, cubic, and bridgeless graph $G$ possesses a collection of perfect matchings $\mathcal{M}$ that is a $(3,2)$-PM cover.

Now, we show that if Conjecture \ref{conjecture2} holds for infinite graphs, then it also holds for finite graphs. Consider a finite, connected, cubic, and bridgeless graph $G$. Let $H$ be the infinite, connected, cubic, and bridgeless graph constructed in Lemma \ref{lemma3} from $G$. Since we assume that Conjecture \ref{conjecture2} holds for infinite graphs, there exist three perfect matchings $M_1, M_2, M_3$ of $H$ such that $M_1 \cap M_2 \cap M_3 = \emptyset$; that is, $H$ has a $(3,2)$-PM cover. By Lemma \ref{lemma3}, the existence of this cover in $H$ implies that $G$ also has a $(3,2)$-PM cover. Therefore, $G$ has three perfect matchings $M_1', M_2', M_3'$ such that $M_1' \cap M_2' \cap M_3' = \emptyset$. This concludes the proof of the theorem.
\end{proof}

Conjecture \ref{conjecture3} does not share the same structural nature as the previous two. It requires finding two perfect matchings whose intersection does not contain an odd edge-cut. To overcome this obstacle, Lemma \ref{thm1} was proved in a more complex manner than strictly necessary, ensuring that the finite graphs in the approximation maintain a more faithful relationship with the infinite graph regarding their cuts. Furthermore, guaranteeing that the intersection of the matchings contains no odd cut leads to a significantly more intricate proof than those we have presented so far.

\begin{thm}\label{thm4}
    If Conjecture~\ref{conjecture3} holds for finite graphs if and only if it holds for infinite graphs.

\end{thm}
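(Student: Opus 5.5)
The plan is to prove the two implications separately. The finite-to-infinite direction uses the approximation of Lemma~\ref{thm1} together with Lemma~\ref{lemma2}. The infinite-to-finite direction uses the ladder construction of Lemma~\ref{lemma3}.

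Throughout, an odd edge-cut means a finite edge set $C=\delta(X)$ with $|C|$ odd. In a finite cubic graph $|\delta(Y)|\equiv |Y| \pmod 2$. Because of this, oddness is a property of the edge set $C$ itself, and the proof only has to preserve two things: that $C$ is still a cut, and that $C$ still lies inside the intersection of the matchings.

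\textbf{Infinite $\Rightarrow$ finite.} Let $G$ be finite, cubic and bridgeless, and let $H$ be the graph of Lemma~\ref{lemma3}. Take perfect matchings $M_1,M_2$ of $H$ such that $M_1\cap M_2$ contains no odd cut, and let $M_1',M_2'$ be the induced perfect matchings of $G$. Suppose $C=\delta_G(X)\subset M_1'\cap M_2'$ with $|C|$ odd, and argue in two cases.
\begin{enumerate}[(a)]
\item If $uv\notin C$, then $u$ and $v$ lie on the same side of the cut. Replacing $X$ by its complement if needed, I assume $u,v\notin X$. Then the copy $X_0\subset V(G_0)$ of $X$ satisfies $\delta_H(X_0)=C\subset M_1\cap M_2$, which contradicts the choice of $M_1,M_2$.
\item If $uv\in C$, say $u\in X$ and $v\notin X$, then $uv\in M_j'$ means $S\subset M_j$ for $j=1,2$. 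Hence $\delta_H(X_0)=(C\setminus\{uv\})\cup\{u_0w_0\}$ has the same odd size and is contained in $M_1\cap M_2$, again a contradiction.
\end{enumerate}

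\textbf{Finite $\Rightarrow$ infinite.} Let $G$ be infinite, and let $\langle H_i\rangle$ be the sequence from the proof of Lemma~\ref{thm1}. For each $i$ choose perfect matchings $M_i^1,M_i^2$ of $H_i$ whose intersection contains no odd cut. Let $M^1,M^2$ be their $F$-limits; by Lemma~\ref{lemma2} these are perfect matchings of $G$. Suppose $C=\delta_G(X)\subset M^1\cap M^2$ is finite with $|C|$ odd. Since $C$ is finite, $\{i: C\subset M_i^1\cap M_i^2\}\in F$ by closure of $F$ under finite intersections. Moreover, as in the proof of Lemma~\ref{thm1}, there is $n$ with the following property: for all $i\ge n$, every edge of $C$, together with its endpoints and their second neighbourhoods, lies in $G_{m_i}$ and appears in $H_i$ as an original, non-parallel edge. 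I intersect these two sets to get a set in $F$, and fix some $i$ in it.

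The key step, and the one I expect to be the main obstacle, is to show that $C$ is an edge cut of $H_i$. Each edge of $H_i$ is of one of three kinds:
\begin{enumerate}[(i)]
\item an original edge of $G_{m_i}$;
\item an edge $e_Q$ obtained by contracting a path $Q\in\mathcal P_i$;
\item an edge of a gadget $\{w_{uv},w'_{uv}\}$.
\end{enumerate}
A path $Q$ in (ii) cannot contain an edge of $C$. The reason is that the endpoints of $C$-edges have degree $3$ in $G_{m_i}$, so they cannot be internal vertices of $Q$, and each $C$-edge survives in $H_i$ as itself rather than being absorbed into a contracted path. Therefore $Q$, being a path of $G-C$, stays within one side $X$ or $V(G)\setminus X$, and so both endpoints of $e_Q$ lie on the same side. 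Likewise, each gadget replaces parallel edges $e_Q,e_{Q'}$ none of which is in $C$, so I place $w_{uv},w'_{uv}$ on the common side of $u$ and $v$. Setting $X_i=(X\cap V(H_i))\cup\{\text{gadget vertices attached to } X\}$, every edge of $H_i$ other than those in $C$ has both ends on the same side, so $\delta_{H_i}(X_i)=C$.

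Since $|C|$ is odd, $C$ is an odd cut of $H_i$ contained in $M_i^1\cap M_i^2$, which contradicts the choice of $M_i^1,M_i^2$. Hence $M^1\cap M^2$ contains no odd cut, and the conjecture holds for $G$. Finally, if $G$ is disconnected, I apply the argument to each component and take the union of the resulting matchings.
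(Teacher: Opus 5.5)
Your proof is correct. The infinite-to-finite direction is the paper's argument: the same case split on whether $uv\in C$, with $uv$ replaced by $u_0w_0$ in the second case. Your explicit choice of the side $X_0$ is cleaner than the paper's description of the shores.

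In the finite-to-infinite direction your overall plan is also the paper's, but you prove the key step, that $C$ is a cut of $H_i$, by a different argument.
\begin{itemize}
\item The paper restricts the bipartition to $V(G_i)$ and extends it to $V(H_i)$ by reachability in $H_i\setminus L$, via the sets $A_i',B_i'$. It then needs Claim~2, which lifts a hypothetical $A_i$--$B_i$ path of $H_i\setminus L$ to a trail of $G_{m_i}\setminus L$, contradicting that $L$ is a cut of $G_{m_i}$.
\item You observe that every non-gadget vertex of $H_i$ is already a vertex of $G$, so the global side $X$ partitions these vertices directly. You put each gadget pair $w_{uv},w'_{uv}$ on the side of its two anchors. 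You then check edge by edge that every edge of $H_i$ outside $C$ either is the image of a $C$-free path of $G_{m_i}$, or lies in a gadget whose anchors are joined by such a path. Hence no such edge crosses.
\end{itemize}
Both arguments rest on the same fact about contracted paths. Yours applies it locally and so avoids the reachability sets, the trail-to-path extraction, and the intermediate cut of $G_{m_i}$.

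You also make explicit the step $\{i : C\subset M_i^1\cap M_i^2\}\in F$ and its intersection with the cofinite tail. The paper leaves this implicit and never writes out the final contradiction for a fixed $i$.

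One small precision. Your tail property is best obtained by taking $i$ so large that the endpoints of the edges of $C$ and their neighbours lie in $V(G_i)$. By the definition of $m_i$, they then have degree $3$ in $G_{m_i}$. That is what your argument actually uses, both to keep $C$-edges out of nontrivial contracted paths and to make them non-parallel. Merely lying in $G_{m_i}$ would not by itself give degree $3$.
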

\begin{proof}
    Note that it suffices to prove the theorem for connected graphs. Let $G$ be an infinite, cubic, connected, and bridgeless graph, let $F$ be a non-principal ultrafilter over $\mathbb{N}$, and let $\langle H_i\rangle_{i\in\mathbb{N}}$ be the sequence of finite, cubic, and bridgeless graphs constructed in Lemma \ref{thm1} such that $G$ is the $F$-limit of the sequence. Since we assume that Conjecture \ref{conjecture3} holds for finite graphs, for each $H_i$ there exist two perfect matchings $M_i^1$ and $M_i^2$ such that $M_i^1 \cap M_i^2$ contains no odd cuts. By Lemma \ref{lemma2}, the $F$-limit of $\langle M_i^j\rangle_{i\in\mathbb{N}}$ is a perfect matching of $G$, which we denote by $M^j$ for $j \in \{1,2\}$. It remains to prove that $M^1 \cap M^2$ contains no odd cuts.

Suppose by contradiction that there exists an odd cut $L \subset E(G)$ of $G$ such that $L \subset M^1 \cap M^2$. Since $L$ is finite and $L \subset M^1 \cap M^2$, there exists $n \in \mathbb{N}$ such that for all $i \geq n$, we have $L \subset E(G_i)$ and $V(L) \subset V(G_i)$, where $V(L)$ is the set of vertices of $G$ incident to the edges of $L$. That is, since $G = \bigcup_{i\in\mathbb{N}} G_i$ and $G_i \subset G_{i+1}$, there exists $n \in \mathbb{N}$ such that for all $i \geq n$, all edges of $L$ are in $G_i$ and all vertices in $V(L)$ have degree 3 in $G_i$. Note that, since $L$ is a cut in $G$, $L$ is also a cut in $G_i$ for $i \geq n$. Indeed, since $G_i \subset G$ and $L$ is a cut of $G$, there exist $A, B \subset V(G)$ such that $A \cap B = \emptyset$, $A \cup B = V(G)$, and $L = E_G(A,B)$. Thus, it suffices to set $A_i = A \cap V(G_i)$ and $B_i = B \cap V(G_i)$, so that $L$ is a cut of $G_i$ with $L = E_{G_{i}}(A_i, B_i)$. Recall that $G_{m_i}$ is a graph from the ear decomposition such that $G_i \subset G_{m_i}$ and every vertex of $G_i$ has degree 3 in $G_{m_i}$. Hence, we also obtain that $L$ is a cut in $G_{m_i}$.

We now show that $L \subset E(H_i')$ for all $i \geq n$. Recall that the graph $H_i'$ is obtained from $G_{m_i}$ by edge contraction of the paths in $G_{m_i}$ whose internal vertices have degree 2 in $G_{m_i}$ and which are maximal with this property (see Figure \ref{fig2}). Since all vertices in $V(L)$ have degree 3 in $G_i$ for $i \geq n$, they also have degree 3 in $G_{m_i}$ because $G_i \subset G_{m_i}$. Thus, the vertices of $V(L)$ are not internal vertices of any paths to be contracted into edges in the graph $H_i'$. Hence, $L \subset E(H_i')$. Now, we prove that $L \subset E(H_i)$. Observe that since all vertices of $V(L)$ have degree 3 in $G_i$, all neighbors of $V(L)$ also have degree 3 in $G_{m_i}$; that is, if $v \in N_G(V(L))$, then for $i \geq n$ we have $v \in V(G_{m_i})$ and $d_{G_{m_i}}(v) = 3$ (see Figure \ref{fig8}). Therefore, the neighboring vertices of $V(L)$ in $G_{m_i}$ are not internal vertices of the paths to be contracted into edges in $H_i'$. Thus, the edges in the set $E_{G_{m_i}}(V(L), N_G(V(L)))$ will not be contracted in $H_i'$. Consequently, given an edge $e = uv \in L$, there is no path in $G_{m_i}$ connecting $u$ and $v$ such that all its internal vertices have degree 2 in $G_{m_i}$, since such a path would necessarily pass through some neighbor of $u$ and some neighbor of $v$, which have degree 3 in $G_{m_i}$. Thus, if $e \in L \subset E(H_i')$, then $e$ is not a parallel edge in $H_i'$. Therefore, $e \in E(H_i)$, implying that $L \subset E(H_i)$ for all $i \geq n$. 

     \begin{figure}[ht]
         \centering

\tikzset{every picture/.style={line width=0.75pt}} 


         \caption{Vertices of $L$ and their neighborhoods in $G_{m_i}$}
         \label{fig8}
     \end{figure}

     Now we will prove that $L$ is a cut in $H_i$ for all $i \geq n$. Since $L$ is a cut of $G_{i}$ for all $i \geq n$, there exist $A_{i}, B_{i} \subset V(G_{i})$ such that $A_{i} \cap B_{i} = \emptyset$, $A_{i} \cup B_{i} = V(G_{i})$, and $L = E_{G_{i}}(A_{i}, B_{i})$. Furthermore, note that $A_i \subset A_{m_i}$ and $B_{i} \subset B_{m_i}$. Consider the following sets:
\[
A_{i}' = \{ w \in V(H_i \setminus G_{i}) : \text{there exists a path in } H_i \setminus L \text{ between } w \text{ and a vertex of } A_i \};
\]
\[
B_{i}' = \{ w \in V(H_i \setminus G_{i}) : \text{there exists a path in } H_i \setminus L \text{ between } w \text{ and a vertex of } B_i \};
\]
recalling that $G_i \subset H_i$. We will prove that the sets $\overline{A}_i = A_i \cup A_i'$ and $\overline{B}_i = B_i \cup B_i'$ form a partition of $V(H_i)$, that is, $\overline{A}_i \cap \overline{B}_i = \emptyset$ and $\overline{A}_i \cup \overline{B}_i = V(H_i)$. Moreover, we will show that $E_{H_i}(\overline{A}_i, \overline{B}_i) = L$.

     \begin{claim}\label{claim1}
         $\overline{A}_i\cup \overline{B}_i=V(H_i)$.
     \end{claim}
     \begin{proof}
         Since $H_i$ is connected, each connected component of $H_i \setminus L$ contains a vertex of either $A_i$ or $B_i$, because each component of $H_i \setminus L$ contains at least one vertex from $V(L) \subset A_i \cup B_i$. Thus, for every $v \in V(H_i \setminus G_i)$, there exists a path in $H_i \setminus L$ between $v$ and a vertex of $A_i$ or a vertex of $B_i$. Hence, $v \in A_i' \cup B_i'$. Therefore, $\overline{A}_i \cup \overline{B}_i = V(H_i)$.
        
     \end{proof}
\begin{claim}\label{claim2}
    There is no path in $H_i-L$ between a vertex of $A_i$ and a vertex of $B_i$.
   
\end{claim}
\begin{proof}
    Note initially that if $e \in E(H_i)$, then it can only be of 3 types:
\begin{enumerate}
    \item $e \in E(G_{m_i}) \cap E(H_i)$;
    \item $e \in E(H_i) \setminus E(G_{m_i})$ and $e \in E(H_i') \cap E(H_i)$, that is, $e$ is an edge resulting from the contraction of a path $P_e$ of $G_{m_i}$ and, furthermore, $e$ does not become a parallel edge in $H_i'$;
    \item $e \in E(H_i) \setminus E(G_{m_i})$ and $e \in E(H_i') \setminus E(H_i)$. In this case, there exist $u,v \in V(G_{m_i})$ such that $e$ has one of the following configurations: $e=uw_{uv}'$, $e=uw_{uv}$, $e=vw_{uv}'$, $e=vw_{uv}$, or $e=w_{uv}w'_{uv}$ (see Figure \ref{fig4}).
\end{enumerate}
Suppose by contradiction that there exists a path $S$ in $H_i \setminus L$ connecting a vertex $x \in A_i$ and a vertex $y \in B_i$. We will construct a path $S'$ in $G_{m_i}$ induced by $S$. To do so, we will show that each edge $e \in S$ can be replaced by a path $S_e \subset G_{m_i}$ such that the union of the paths $S_e$ forms the path $S'$. We analyze the three types of edges that can appear in $S$:
\begin{itemize}
    \item If $e \in E(S)$ is of type 1, then let $S_e = \{e\} \subset G_{m_i}$.
    \item If $e \in E(S)$ is of type 2, then there exists a path $Q_e \subset G_{m_i}$ that was contracted into the edge $e$, such that it is maximal with respect to the property that all its internal vertices have degree 2 in $G_{m_i}$. Note that $e=uv \in E(H_i)$ if and only if $Q_e$ connects the vertices $u$ and $v$ in $G_{m_i}$. Thus, let $S_e = Q_e$.
    \item If $e \in E(S)$ is of type 3, then there exist $u,v \in V(G_{m_i})$ such that:
    \[
    e \in N = \{uw_{uv}', uw_{uv}, vw_{uv}', vw_{uv}, w_{uv}w'_{uv}\},
    \]
    see Figure \ref{fig4}. By construction, $u,v \in V(H_i')$. Furthermore, $w_{uv}, w_{uv}' \notin A_i \cup B_i$, since $w_{uv}, w_{uv}' \notin V(G_{m_i})$ and $A_i \cup B_i \subset V(G_{m_i})$. Since $e \in E(S)$, $e \in N$, and $S$ is a path connecting a vertex of $A_i$ to a vertex of $B_i$, it follows that $S$ must contain the vertices $u$ and $v$. Additionally, consider the induced subgraph $K = H_i[\{u, v, w_{uv}, w'_{uv}\}]$ (like the rightmost graph in Figure \ref{fig9} $(a)$). Then, $S \cap K$ is a path containing $e$, see Figure~\ref{fig9} $(b)$.
    
        \begin{figure}[ht]
            \centering

\tikzset{every picture/.style={line width=0.75pt}} 

\begin{tikzpicture}[x=0.6pt,y=0.6pt,yscale=-1,xscale=1]

\draw    (40.97,64.57) -- (82.72,65.07) ;
\draw [shift={(82.72,65.07)}, rotate = 0.69] [color={rgb, 255:red, 0; green, 0; blue, 0 }  ][fill={rgb, 255:red, 0; green, 0; blue, 0 }  ][line width=0.75]      (0, 0) circle [x radius= 3.35, y radius= 3.35]   ;
\draw [shift={(40.97,64.57)}, rotate = 0.69] [color={rgb, 255:red, 0; green, 0; blue, 0 }  ][fill={rgb, 255:red, 0; green, 0; blue, 0 }  ][line width=0.75]      (0, 0) circle [x radius= 3.35, y radius= 3.35]   ;
\draw    (82.72,65.07) -- (124.47,65.57) ;
\draw [shift={(124.47,65.57)}, rotate = 0.69] [color={rgb, 255:red, 0; green, 0; blue, 0 }  ][fill={rgb, 255:red, 0; green, 0; blue, 0 }  ][line width=0.75]      (0, 0) circle [x radius= 3.35, y radius= 3.35]   ;
\draw [shift={(82.72,65.07)}, rotate = 0.69] [color={rgb, 255:red, 0; green, 0; blue, 0 }  ][fill={rgb, 255:red, 0; green, 0; blue, 0 }  ][line width=0.75]      (0, 0) circle [x radius= 3.35, y radius= 3.35]   ;
\draw    (82.72,65.07) -- (83.63,36.23) ;
\draw [shift={(83.63,36.23)}, rotate = 271.82] [color={rgb, 255:red, 0; green, 0; blue, 0 }  ][fill={rgb, 255:red, 0; green, 0; blue, 0 }  ][line width=0.75]      (0, 0) circle [x radius= 3.35, y radius= 3.35]   ;
\draw [color={rgb, 255:red, 0; green, 0; blue, 0 }  ,draw opacity=1 ]   (40.97,64.57) .. controls (50.23,39.87) and (71.43,35.87) .. (83.63,36.23) ;
\draw [shift={(40.97,64.57)}, rotate = 290.56] [color={rgb, 255:red, 0; green, 0; blue, 0 }  ,draw opacity=1 ][fill={rgb, 255:red, 0; green, 0; blue, 0 }  ,fill opacity=1 ][line width=0.75]      (0, 0) circle [x radius= 3.35, y radius= 3.35]   ;
\draw [color={rgb, 255:red, 0; green, 0; blue, 0 }  ,draw opacity=1 ]   (83.63,36.23) .. controls (107.43,36.27) and (123.03,53.47) .. (124.47,65.57) ;
\draw [shift={(83.63,36.23)}, rotate = 0.08] [color={rgb, 255:red, 0; green, 0; blue, 0 }  ,draw opacity=1 ][fill={rgb, 255:red, 0; green, 0; blue, 0 }  ,fill opacity=1 ][line width=0.75]      (0, 0) circle [x radius= 3.35, y radius= 3.35]   ;
\draw    (204.47,65.17) -- (246.22,65.67) ;
\draw [shift={(246.22,65.67)}, rotate = 0.69] [color={rgb, 255:red, 0; green, 0; blue, 0 }  ][fill={rgb, 255:red, 0; green, 0; blue, 0 }  ][line width=0.75]      (0, 0) circle [x radius= 3.35, y radius= 3.35]   ;
\draw [shift={(204.47,65.17)}, rotate = 0.69] [color={rgb, 255:red, 0; green, 0; blue, 0 }  ][fill={rgb, 255:red, 0; green, 0; blue, 0 }  ][line width=0.75]      (0, 0) circle [x radius= 3.35, y radius= 3.35]   ;
\draw [color={rgb, 255:red, 208; green, 2; blue, 27 }  ,draw opacity=1 ]   (246.22,65.67) -- (287.97,66.17) ;
\draw [shift={(287.97,66.17)}, rotate = 0.69] [color={rgb, 255:red, 208; green, 2; blue, 27 }  ,draw opacity=1 ][fill={rgb, 255:red, 208; green, 2; blue, 27 }  ,fill opacity=1 ][line width=0.75]      (0, 0) circle [x radius= 3.35, y radius= 3.35]   ;
\draw [shift={(246.22,65.67)}, rotate = 0.69] [color={rgb, 255:red, 208; green, 2; blue, 27 }  ,draw opacity=1 ][fill={rgb, 255:red, 208; green, 2; blue, 27 }  ,fill opacity=1 ][line width=0.75]      (0, 0) circle [x radius= 3.35, y radius= 3.35]   ;
\draw [color={rgb, 255:red, 208; green, 2; blue, 27 }  ,draw opacity=1 ]   (246.22,65.67) -- (247.13,36.83) ;
\draw [shift={(247.13,36.83)}, rotate = 271.82] [color={rgb, 255:red, 208; green, 2; blue, 27 }  ,draw opacity=1 ][fill={rgb, 255:red, 208; green, 2; blue, 27 }  ,fill opacity=1 ][line width=0.75]      (0, 0) circle [x radius= 3.35, y radius= 3.35]   ;
\draw [color={rgb, 255:red, 208; green, 2; blue, 27 }  ,draw opacity=1 ]   (204.47,65.17) .. controls (213.73,40.47) and (234.93,36.47) .. (247.13,36.83) ;
\draw [shift={(204.47,65.17)}, rotate = 290.56] [color={rgb, 255:red, 208; green, 2; blue, 27 }  ,draw opacity=1 ][fill={rgb, 255:red, 208; green, 2; blue, 27 }  ,fill opacity=1 ][line width=0.75]      (0, 0) circle [x radius= 3.35, y radius= 3.35]   ;
\draw    (247.13,36.83) .. controls (270.93,36.87) and (286.53,54.07) .. (287.97,66.17) ;
\draw   (149.47,62.78) -- (164.77,62.78) -- (164.77,58.9) -- (174.97,66.65) -- (164.77,74.4) -- (164.77,70.53) -- (149.47,70.53) -- cycle ;
\draw    (532.1,66) -- (559.89,66.17) ;
\draw [shift={(559.89,66.17)}, rotate = 0.34] [color={rgb, 255:red, 0; green, 0; blue, 0 }  ][fill={rgb, 255:red, 0; green, 0; blue, 0 }  ][line width=0.75]      (0, 0) circle [x radius= 3.35, y radius= 3.35]   ;
\draw [shift={(532.1,66)}, rotate = 0.34] [color={rgb, 255:red, 0; green, 0; blue, 0 }  ][fill={rgb, 255:red, 0; green, 0; blue, 0 }  ][line width=0.75]      (0, 0) circle [x radius= 3.35, y radius= 3.35]   ;
\draw [color={rgb, 255:red, 0; green, 0; blue, 0 }  ,draw opacity=1 ]   (559.89,66.17) -- (584.99,66.24) ;
\draw [shift={(584.99,66.24)}, rotate = 0.17] [color={rgb, 255:red, 0; green, 0; blue, 0 }  ,draw opacity=1 ][fill={rgb, 255:red, 0; green, 0; blue, 0 }  ,fill opacity=1 ][line width=0.75]      (0, 0) circle [x radius= 3.35, y radius= 3.35]   ;
\draw [shift={(559.89,66.17)}, rotate = 0.17] [color={rgb, 255:red, 0; green, 0; blue, 0 }  ,draw opacity=1 ][fill={rgb, 255:red, 0; green, 0; blue, 0 }  ,fill opacity=1 ][line width=0.75]      (0, 0) circle [x radius= 3.35, y radius= 3.35]   ;
\draw [color={rgb, 255:red, 0; green, 0; blue, 0 }  ,draw opacity=1 ]   (584.99,66.24) -- (610.1,66.31) ;
\draw [shift={(610.1,66.31)}, rotate = 0.17] [color={rgb, 255:red, 0; green, 0; blue, 0 }  ,draw opacity=1 ][fill={rgb, 255:red, 0; green, 0; blue, 0 }  ,fill opacity=1 ][line width=0.75]      (0, 0) circle [x radius= 3.35, y radius= 3.35]   ;
\draw [shift={(584.99,66.24)}, rotate = 0.17] [color={rgb, 255:red, 0; green, 0; blue, 0 }  ,draw opacity=1 ][fill={rgb, 255:red, 0; green, 0; blue, 0 }  ,fill opacity=1 ][line width=0.75]      (0, 0) circle [x radius= 3.35, y radius= 3.35]   ;
\draw [color={rgb, 255:red, 0; green, 0; blue, 0 }  ,draw opacity=1 ]   (610.1,66.31) -- (635.2,66.38) ;
\draw [shift={(635.2,66.38)}, rotate = 0.17] [color={rgb, 255:red, 0; green, 0; blue, 0 }  ,draw opacity=1 ][fill={rgb, 255:red, 0; green, 0; blue, 0 }  ,fill opacity=1 ][line width=0.75]      (0, 0) circle [x radius= 3.35, y radius= 3.35]   ;
\draw [shift={(610.1,66.31)}, rotate = 0.17] [color={rgb, 255:red, 0; green, 0; blue, 0 }  ,draw opacity=1 ][fill={rgb, 255:red, 0; green, 0; blue, 0 }  ,fill opacity=1 ][line width=0.75]      (0, 0) circle [x radius= 3.35, y radius= 3.35]   ;
\draw   (314.67,62.08) -- (329.97,62.08) -- (329.97,58.2) -- (340.17,65.95) -- (329.97,73.7) -- (329.97,69.83) -- (314.67,69.83) -- cycle ;
\draw [color={rgb, 255:red, 208; green, 2; blue, 27 }  ,draw opacity=1 ]   (547.27,45) -- (532.1,66) ;
\draw [shift={(532.1,66)}, rotate = 125.84] [color={rgb, 255:red, 208; green, 2; blue, 27 }  ,draw opacity=1 ][fill={rgb, 255:red, 208; green, 2; blue, 27 }  ,fill opacity=1 ][line width=0.75]      (0, 0) circle [x radius= 3.35, y radius= 3.35]   ;
\draw [shift={(547.27,45)}, rotate = 125.84] [color={rgb, 255:red, 208; green, 2; blue, 27 }  ,draw opacity=1 ][fill={rgb, 255:red, 208; green, 2; blue, 27 }  ,fill opacity=1 ][line width=0.75]      (0, 0) circle [x radius= 3.35, y radius= 3.35]   ;
\draw [color={rgb, 255:red, 208; green, 2; blue, 27 }  ,draw opacity=1 ]   (575.93,30.33) -- (547.27,45) ;
\draw [shift={(547.27,45)}, rotate = 152.9] [color={rgb, 255:red, 208; green, 2; blue, 27 }  ,draw opacity=1 ][fill={rgb, 255:red, 208; green, 2; blue, 27 }  ,fill opacity=1 ][line width=0.75]      (0, 0) circle [x radius= 3.35, y radius= 3.35]   ;
\draw [shift={(575.93,30.33)}, rotate = 152.9] [color={rgb, 255:red, 208; green, 2; blue, 27 }  ,draw opacity=1 ][fill={rgb, 255:red, 208; green, 2; blue, 27 }  ,fill opacity=1 ][line width=0.75]      (0, 0) circle [x radius= 3.35, y radius= 3.35]   ;
\draw [color={rgb, 255:red, 208; green, 2; blue, 27 }  ,draw opacity=1 ]   (600.6,31.67) -- (575.93,30.33) ;
\draw [shift={(575.93,30.33)}, rotate = 183.09] [color={rgb, 255:red, 208; green, 2; blue, 27 }  ,draw opacity=1 ][fill={rgb, 255:red, 208; green, 2; blue, 27 }  ,fill opacity=1 ][line width=0.75]      (0, 0) circle [x radius= 3.35, y radius= 3.35]   ;
\draw [shift={(600.6,31.67)}, rotate = 183.09] [color={rgb, 255:red, 208; green, 2; blue, 27 }  ,draw opacity=1 ][fill={rgb, 255:red, 208; green, 2; blue, 27 }  ,fill opacity=1 ][line width=0.75]      (0, 0) circle [x radius= 3.35, y radius= 3.35]   ;
\draw [color={rgb, 255:red, 208; green, 2; blue, 27 }  ,draw opacity=1 ]   (600.6,31.67) -- (617.93,43.67) ;
\draw [shift={(617.93,43.67)}, rotate = 34.7] [color={rgb, 255:red, 208; green, 2; blue, 27 }  ,draw opacity=1 ][fill={rgb, 255:red, 208; green, 2; blue, 27 }  ,fill opacity=1 ][line width=0.75]      (0, 0) circle [x radius= 3.35, y radius= 3.35]   ;
\draw [shift={(600.6,31.67)}, rotate = 34.7] [color={rgb, 255:red, 208; green, 2; blue, 27 }  ,draw opacity=1 ][fill={rgb, 255:red, 208; green, 2; blue, 27 }  ,fill opacity=1 ][line width=0.75]      (0, 0) circle [x radius= 3.35, y radius= 3.35]   ;
\draw [color={rgb, 255:red, 208; green, 2; blue, 27 }  ,draw opacity=1 ]   (617.93,43.67) -- (635.2,66.38) ;
\draw [shift={(635.2,66.38)}, rotate = 52.76] [color={rgb, 255:red, 208; green, 2; blue, 27 }  ,draw opacity=1 ][fill={rgb, 255:red, 208; green, 2; blue, 27 }  ,fill opacity=1 ][line width=0.75]      (0, 0) circle [x radius= 3.35, y radius= 3.35]   ;
\draw [shift={(617.93,43.67)}, rotate = 52.76] [color={rgb, 255:red, 208; green, 2; blue, 27 }  ,draw opacity=1 ][fill={rgb, 255:red, 208; green, 2; blue, 27 }  ,fill opacity=1 ][line width=0.75]      (0, 0) circle [x radius= 3.35, y radius= 3.35]   ;
\draw    (375,65.6) -- (416.75,66.1) ;
\draw [shift={(375,65.6)}, rotate = 0.69] [color={rgb, 255:red, 0; green, 0; blue, 0 }  ][fill={rgb, 255:red, 0; green, 0; blue, 0 }  ][line width=0.75]      (0, 0) circle [x radius= 3.35, y radius= 3.35]   ;
\draw    (416.75,66.1) -- (458.5,66.6) ;
\draw [shift={(458.5,66.6)}, rotate = 0.69] [color={rgb, 255:red, 0; green, 0; blue, 0 }  ][fill={rgb, 255:red, 0; green, 0; blue, 0 }  ][line width=0.75]      (0, 0) circle [x radius= 3.35, y radius= 3.35]   ;
\draw [color={rgb, 255:red, 0; green, 0; blue, 0 }  ,draw opacity=1 ]   (375,65.6) .. controls (384.27,40.9) and (405.47,36.9) .. (417.67,37.27) ;
\draw [color={rgb, 255:red, 0; green, 0; blue, 0 }  ,draw opacity=1 ]   (417.67,37.27) .. controls (441.47,37.3) and (457.07,54.5) .. (458.5,66.6) ;
\draw   (482.27,63.14) -- (497.57,63.14) -- (497.57,59.27) -- (507.77,67.02) -- (497.57,74.77) -- (497.57,70.89) -- (482.27,70.89) -- cycle ;

\draw (19.97,53.47) node [anchor=north west][inner sep=0.75pt]    {$u$};
\draw (131.97,55.47) node [anchor=north west][inner sep=0.75pt]    {$v$};
\draw (70.63,74.63) node [anchor=north west][inner sep=0.75pt]    {$w_{uv}$};
\draw (69.3,8.63) node [anchor=north west][inner sep=0.75pt]    {$w'_{uv}$};
\draw (59.93,98.67) node [anchor=north west][inner sep=0.75pt]    {$K\subset H_{i}$};
\draw (183.47,54.07) node [anchor=north west][inner sep=0.75pt]    {$u$};
\draw (294.97,55.57) node [anchor=north west][inner sep=0.75pt]    {$v$};
\draw (234.13,75.23) node [anchor=north west][inner sep=0.75pt]    {$w_{uv}$};
\draw (232.8,9.23) node [anchor=north west][inner sep=0.75pt]    {$w'_{uv}$};
\draw (210.07,99.6) node [anchor=north west][inner sep=0.75pt]  [color={rgb, 255:red, 208; green, 2; blue, 27 }  ,opacity=1 ]  {$S\cap K\subset H_{i}$};
\draw (511.8,54.57) node [anchor=north west][inner sep=0.75pt]    {$u$};
\draw (645.3,57.07) node [anchor=north west][inner sep=0.75pt]    {$v$};
\draw (410.7,97.67) node [anchor=north west][inner sep=0.75pt]    {$H_{i} '$};
\draw (354,54.5) node [anchor=north west][inner sep=0.75pt]    {$u$};
\draw (466,56.5) node [anchor=north west][inner sep=0.75pt]    {$v$};
\draw (408,12) node [anchor=north west][inner sep=0.75pt]    {$e'_{uv}$};
\draw (409.33,68.33) node [anchor=north west][inner sep=0.75pt]    {$e_{u}{}_{v}$};
\draw (572.4,93.6) node [anchor=north west][inner sep=0.75pt]    {$G_{m}{}_{_{i}}$};
\draw (580.27,3.07) node [anchor=north west][inner sep=0.75pt]  [color={rgb, 255:red, 208; green, 2; blue, 27 }  ,opacity=1 ]  {$S_{e}$};
\draw (71.33,126.73) node [anchor=north west][inner sep=0.75pt]    {$( a)$};
\draw (236,126.73) node [anchor=north west][inner sep=0.75pt]    {$( b)$};
\draw (412,127.73) node [anchor=north west][inner sep=0.75pt]    {$( c)$};
\draw (574,127.73) node [anchor=north west][inner sep=0.75pt]    {$( d)$};

\end{tikzpicture}
            \caption{Construction of $S_e$ in the case of former parallel edges}
            \label{fig9}
        \end{figure}

Note that $L \cap E(K) = \emptyset$, since $L \subset E(G_{m_i})$ and $E(K) \cap E(G_{m_i}) = \emptyset$. Recall that $w_{uv}'$ and $w_{uv}$ were vertices added to the edges $e_{uv}'$ and $e_{uv}$ of $H_i'$, respectively, to eliminate the parallel edges of $H_i'$ (see Figure \ref{fig9} $(c)$). Then, since $L \cap E(K) = \emptyset$, we have $e_{uv}', e_{uv} \notin L$. Recall also that the edge $e_{uv}$ is the result of the contraction of a path $Q_{e_{uv}}$ of $G_{m_i}$. Since $L \subset E(H_i')$, none of its edges were contracted or subdivided during the constructions of $H_i'$ and $H_i$. Hence, $L \cap E(Q_{e_{uv}}) = \emptyset$. Thus, let $S_e = Q_{e_{uv}}$ (see Figure~\ref{fig9}~$(d)$). Note that this choice is independent of the configuration of $e \in N$. Note also that we are replacing the path $S \cap K$ (which connects $u$ and $v$ in $H_i \setminus L$) with $Q_{e_{uv}}$ (which connects $u$ and $v$ in $G_{m_i} \setminus L$).
 \end{itemize}

Now, to construct the trail $S'' \subset G_{m_i}$ from $S \subset H_i$, it suffices to set $S' = \bigcup_{e\in E(S)} S_e$. By construction, $S''$ is a trail in $G_{m_i}$ connecting $x \in A_i$ and $y \in B_i$. Consider $S'\subset S''$ a path connecting $x \in A_i$ and $y \in B_i$. We will show that $E(S') \cap L = \emptyset$. As constructed, each $S_e$ satisfies $E(S_e) \cap L = \emptyset$, which implies $E(S') \cap L = \emptyset$. But this is a contradiction, because $L$ is a cut in $G_{m_i}$ with shores $A_{m_i}$ and $B_{m_i}$, where $A_i \subset A_{m_i}$ and $B_{i} \subset B_{m_i}$, which prevents the existence of a path in $G_{m_i} \setminus L$ joining $A_i$ and $B_i$. Therefore, there can be no path in $H_i \setminus L$ connecting a vertex of $A_i$ to a vertex of $B_i$. This completes the proof of the claim.

\end{proof}
     \begin{claim}\label{claim3}
         $\overline{A}_i\cap \overline{B}_i=\emptyset$.
     \end{claim}
     \begin{proof}
         Suppose by contradiction that $\overline{A}_i \cap \overline{B}_i \neq \emptyset$, that is, there exists a vertex $z \in \overline{A}_i \cap \overline{B}_i$. Since $A_i$ and $B_i$ form a partition of $V(G_i)$, by the definitions of $A_i'$ and $B_i'$, it follows that $A_i \cap (B_i \cup B_i') = \emptyset$ and $B_i \cap (A_i \cup A_i') = \emptyset$. Thus, we must have $z \in A_i' \cap B_i'$. By the definition of these sets, there exists a path $Q$ in $H_i \setminus L$ connecting $z$ to some vertex $x \in A_i$, and a path $Q'$ in $H_i \setminus L$ connecting $z$ to some vertex $y \in B_i$. By concatenating these paths, we obtain a path $S$ in $H_i \setminus L$ connecting $x \in A_i$ and $y \in B_i$, which contradicts Claim \ref{claim2}.
     \end{proof}
     \begin{claim}\label{claim4}
         $E_{H_i}(\overline{A}_i, \overline{B}_i)=L$.
     \end{claim}
     \begin{proof}
         Since $L$ was taken as a cut in $G_i$, every edge of $L$ has one vertex in $A_i \subset \overline{A}_i$ and the other vertex in $B_i \subset \overline{B}_i$. Thus, $L \subset E_{H_i}(\overline{A}_i, \overline{B}_i)$.

Conversely, let $e \in E_{H_i}(\overline{A}_i, \overline{B}_i)$ and suppose by contradiction that $e \notin L$. Let $e = uv$ with $u \in \overline{A}_i$ and $v \in \overline{B}_i$. By the definition of $\overline{A}_i$, since $u \in \overline{A}_i$, there exist a vertex $x \in A_i$ and a path $Q \subset H_i \setminus L$ connecting $u$ and $x$. Similarly, by the definition of $\overline{B}_i$, since $v \in \overline{B}_i$, there exist a vertex $y \in B_i$ and a path $Q' \subset H_i \setminus L$ connecting $v$ and $y$. Since the edge $e = uv$ does not belong to $L$, we can concatenate the path $Q$, the edge $e$, and the path $Q'$ to obtain a path in $H_i \setminus L$ connecting $x \in A_i$ and $y \in B_i$. However, this contradicts Claim~\ref{claim2}. Hence, $E_{H_i}(\overline{A}_i, \overline{B}_i) \subset L$, which implies that $E_{H_i}(\overline{A}_i, \overline{B}_i) = L$.
     \end{proof}

     Since $L$ was taken as a cut in $G_i$, every edge of $L$ has one vertex in $A_i \subset \overline{A}_i$ and the other vertex in $B_i \subset \overline{B}_i$. Thus, $L \subset E_{H_i}(\overline{A}_i, \overline{B}_i)$.

Conversely, let $e \in E_{H_i}(\overline{A}_i, \overline{B}_i)$ and suppose by contradiction that $e \notin L$. Let $e = uv$ with $u \in \overline{A}_i$ and $v \in \overline{B}_i$. By the definition of $\overline{A}_i$, since $u \in \overline{A}_i$, there exist a vertex $x \in A_i$ and a path $Q \subset H_i \setminus L$ connecting $u$ and $x$. Similarly, by the definition of $\overline{B}_i$, since $v \in \overline{B}_i$, there exist a vertex $y \in B_i$ and a path $Q' \subset H_i \setminus L$ connecting $v$ and $y$. Since the edge $e = uv$ does not belong to $L$, we can concatenate the path $Q$, the edge $e$, and the path $Q'$ to obtain a path in $H_i \setminus L$ connecting $x \in A_i$ and $y \in B_i$. However, this contradicts Claim~\ref{claim2}. Hence, $E_{H_i}(\overline{A}_i, \overline{B}_i) \subset L$, which implies that $E_{H_i}(\overline{A}_i, \overline{B}_i) = L$.   

     We now show that if the conjecture holds for infinite graphs, then it also holds for finite graphs. Indeed, let $G$ be a finite, bridgeless cubic graph. Consider $u,v \in V(G)$ with $u \neq v$ and $uv \in E(G)$. Let $H$ be the infinite, bridgeless cubic graph constructed from $G$ as in Lemma \ref{lemma3}. Fix $S$ to be the set of the two edges separating $G_0$ from the remainder of the graph $H$, where $G_0$ is a copy of $G$ with the edge $uv$ replaced by the edges in $S$.

     By Conjecture \ref{conjecture3} for infinite graphs, $H$ admits two perfect matchings $M_1$ and $M_2$ such that $M_1 \cap M_2$ contains no odd cut. By Lemma \ref{lemma3}, $M_1$ and $M_2$ induce perfect matchings $M_1'$ and $M_2'$ in $G$, respectively. Recall that either $S \subset M_i$ or $M_i \cap S = \emptyset$. Moreover, if $M_i$ contains $S$, then $M_i'$ contains the edge $uv$; if $M_i$ is disjoint from $S$, then $M_i'$ does not contain $uv$. We now show that $M_1' \cap M_2'$ contains no odd cut.

     Let $F' \subset E(G)$ be a cut such that $F' \subset M_1' \cap M_2'$, and let $A'$ and $B'$ denote the partition classes of the cut, so that $F' = E(A',B')$. If $uv \notin F'$, then $F' \subset M_1 \cap M_2$. We show that $F'$ is a cut of $H$. Indeed, if $uv \notin F'$, then $u$ and $v$ lie on the same side of the cut. Assume without loss of generality that $u,v \in A'$. We set $B = B'$ and define $A$ to be the union of $A'$ with the vertex set of the connected component of $H \setminus F'$ containing $u$ and $v$. It is immediate that $F' = E(A,B)$, and thus $F'$ is a cut of $H$. Since $F'$ is a cut of $H$ and $F' \subset M_1 \cap M_2$, it follows that $F'$ has even size. On the other hand, if $uv \in F'$, then $u$ and $v$ belong to different sides of the cut, say $u \in A'$ and $v \in B'$. Since $F' \subset M_1' \cap M_2'$, we have $uv \in M_1' \cap M_2'$, which implies $S \subset M_1 \cap M_2$. Let $f \in S$ be one of the edges in $S$. Then $F = (F' \setminus \{uv\}) \cup \{f\}$ is a cut of $H$. Indeed, suppose that $f$ is incident to $u$. We set $A = A'$ and define $B$ as the union of $B'$ with the connected component of $v$ in $H \setminus F$. It is immediate that $F = E(A,B)$ is a cut of $H$, as illustrated in Figure \ref{fig10}. Thus, since $F \subset M_1 \cap M_2$ and $F$ is a cut of $H$, $F$ has even size. Consequently, $F'$ also has even size, completing the proof.
     \begin{figure}[ht]
         \centering

\tikzset{every picture/.style={line width=0.75pt}} 

\begin{tikzpicture}[x=0.65pt,y=0.65pt,yscale=-1,xscale=1]

\draw [color={rgb, 255:red, 208; green, 2; blue, 27 }  ,draw opacity=1 ]   (108.5,193.5) .. controls (110.17,191.83) and (111.83,191.83) .. (113.5,193.5) .. controls (115.17,195.17) and (116.83,195.17) .. (118.5,193.5) .. controls (120.17,191.83) and (121.83,191.83) .. (123.5,193.5) .. controls (125.17,195.17) and (126.83,195.17) .. (128.5,193.5) .. controls (130.17,191.83) and (131.83,191.83) .. (133.5,193.5) .. controls (135.17,195.17) and (136.83,195.17) .. (138.5,193.5) .. controls (140.17,191.83) and (141.83,191.83) .. (143.5,193.5) -- (147,193.5) -- (147,193.5) ;
\draw [shift={(147,193.5)}, rotate = 0] [color={rgb, 255:red, 208; green, 2; blue, 27 }  ,draw opacity=1 ][fill={rgb, 255:red, 208; green, 2; blue, 27 }  ,fill opacity=1 ][line width=0.75]      (0, 0) circle [x radius= 3.35, y radius= 3.35]   ;
\draw [shift={(108.5,193.5)}, rotate = 0] [color={rgb, 255:red, 208; green, 2; blue, 27 }  ,draw opacity=1 ][fill={rgb, 255:red, 208; green, 2; blue, 27 }  ,fill opacity=1 ][line width=0.75]      (0, 0) circle [x radius= 3.35, y radius= 3.35]   ;
\draw   (147,98.8) .. controls (147,92.28) and (152.28,87) .. (158.8,87) -- (194.2,87) .. controls (200.72,87) and (206,92.28) .. (206,98.8) -- (206,197.2) .. controls (206,203.72) and (200.72,209) .. (194.2,209) -- (158.8,209) .. controls (152.28,209) and (147,203.72) .. (147,197.2) -- cycle ;
\draw [color={rgb, 255:red, 208; green, 2; blue, 27 }  ,draw opacity=1 ]   (108.5,161.5) .. controls (110.17,159.83) and (111.83,159.83) .. (113.5,161.5) .. controls (115.17,163.17) and (116.83,163.17) .. (118.5,161.5) .. controls (120.17,159.83) and (121.83,159.83) .. (123.5,161.5) .. controls (125.17,163.17) and (126.83,163.17) .. (128.5,161.5) .. controls (130.17,159.83) and (131.83,159.83) .. (133.5,161.5) .. controls (135.17,163.17) and (136.83,163.17) .. (138.5,161.5) .. controls (140.17,159.83) and (141.83,159.83) .. (143.5,161.5) -- (147,161.5) -- (147,161.5) ;
\draw [shift={(147,161.5)}, rotate = 0] [color={rgb, 255:red, 208; green, 2; blue, 27 }  ,draw opacity=1 ][fill={rgb, 255:red, 208; green, 2; blue, 27 }  ,fill opacity=1 ][line width=0.75]      (0, 0) circle [x radius= 3.35, y radius= 3.35]   ;
\draw [shift={(108.5,161.5)}, rotate = 0] [color={rgb, 255:red, 208; green, 2; blue, 27 }  ,draw opacity=1 ][fill={rgb, 255:red, 208; green, 2; blue, 27 }  ,fill opacity=1 ][line width=0.75]      (0, 0) circle [x radius= 3.35, y radius= 3.35]   ;
\draw [color={rgb, 255:red, 208; green, 2; blue, 27 }  ,draw opacity=1 ]   (108.5,129.5) .. controls (110.17,127.83) and (111.83,127.83) .. (113.5,129.5) .. controls (115.17,131.17) and (116.83,131.17) .. (118.5,129.5) .. controls (120.17,127.83) and (121.83,127.83) .. (123.5,129.5) .. controls (125.17,131.17) and (126.83,131.17) .. (128.5,129.5) .. controls (130.17,127.83) and (131.83,127.83) .. (133.5,129.5) .. controls (135.17,131.17) and (136.83,131.17) .. (138.5,129.5) .. controls (140.17,127.83) and (141.83,127.83) .. (143.5,129.5) -- (147,129.5) -- (147,129.5) ;
\draw [shift={(147,129.5)}, rotate = 0] [color={rgb, 255:red, 208; green, 2; blue, 27 }  ,draw opacity=1 ][fill={rgb, 255:red, 208; green, 2; blue, 27 }  ,fill opacity=1 ][line width=0.75]      (0, 0) circle [x radius= 3.35, y radius= 3.35]   ;
\draw [shift={(108.5,129.5)}, rotate = 0] [color={rgb, 255:red, 208; green, 2; blue, 27 }  ,draw opacity=1 ][fill={rgb, 255:red, 208; green, 2; blue, 27 }  ,fill opacity=1 ][line width=0.75]      (0, 0) circle [x radius= 3.35, y radius= 3.35]   ;
\draw [color={rgb, 255:red, 208; green, 2; blue, 27 }  ,draw opacity=1 ]   (108.5,99.5) .. controls (110.17,97.83) and (111.83,97.83) .. (113.5,99.5) .. controls (115.17,101.17) and (116.83,101.17) .. (118.5,99.5) .. controls (120.17,97.83) and (121.83,97.83) .. (123.5,99.5) .. controls (125.17,101.17) and (126.83,101.17) .. (128.5,99.5) .. controls (130.17,97.83) and (131.83,97.83) .. (133.5,99.5) .. controls (135.17,101.17) and (136.83,101.17) .. (138.5,99.5) .. controls (140.17,97.83) and (141.83,97.83) .. (143.5,99.5) -- (147,99.5) -- (147,99.5) ;
\draw [shift={(147,99.5)}, rotate = 0] [color={rgb, 255:red, 208; green, 2; blue, 27 }  ,draw opacity=1 ][fill={rgb, 255:red, 208; green, 2; blue, 27 }  ,fill opacity=1 ][line width=0.75]      (0, 0) circle [x radius= 3.35, y radius= 3.35]   ;
\draw [shift={(108.5,99.5)}, rotate = 0] [color={rgb, 255:red, 208; green, 2; blue, 27 }  ,draw opacity=1 ][fill={rgb, 255:red, 208; green, 2; blue, 27 }  ,fill opacity=1 ][line width=0.75]      (0, 0) circle [x radius= 3.35, y radius= 3.35]   ;
\draw   (49.5,95.1) .. controls (49.5,88.58) and (54.78,83.3) .. (61.3,83.3) -- (96.7,83.3) .. controls (103.22,83.3) and (108.5,88.58) .. (108.5,95.1) -- (108.5,193.5) .. controls (108.5,200.02) and (103.22,205.3) .. (96.7,205.3) -- (61.3,205.3) .. controls (54.78,205.3) and (49.5,200.02) .. (49.5,193.5) -- cycle ;
\draw    (444.5,148.5) ;
\draw [shift={(444.5,148.5)}, rotate = 0] [color={rgb, 255:red, 0; green, 0; blue, 0 }  ][fill={rgb, 255:red, 0; green, 0; blue, 0 }  ][line width=0.75]      (0, 0) circle [x radius= 3.35, y radius= 3.35]   ;
\draw [shift={(444.5,148.5)}, rotate = 0] [color={rgb, 255:red, 0; green, 0; blue, 0 }  ][fill={rgb, 255:red, 0; green, 0; blue, 0 }  ][line width=0.75]      (0, 0) circle [x radius= 3.35, y radius= 3.35]   ;
\draw   (483,53.8) .. controls (483,47.28) and (488.28,42) .. (494.8,42) -- (530.2,42) .. controls (536.72,42) and (542,47.28) .. (542,53.8) -- (542,152.2) .. controls (542,158.72) and (536.72,164) .. (530.2,164) -- (494.8,164) .. controls (488.28,164) and (483,158.72) .. (483,152.2) -- cycle ;
\draw [color={rgb, 255:red, 208; green, 2; blue, 27 }  ,draw opacity=1 ]   (444.5,116.5) .. controls (446.17,114.83) and (447.83,114.83) .. (449.5,116.5) .. controls (451.17,118.17) and (452.83,118.17) .. (454.5,116.5) .. controls (456.17,114.83) and (457.83,114.83) .. (459.5,116.5) .. controls (461.17,118.17) and (462.83,118.17) .. (464.5,116.5) .. controls (466.17,114.83) and (467.83,114.83) .. (469.5,116.5) .. controls (471.17,118.17) and (472.83,118.17) .. (474.5,116.5) .. controls (476.17,114.83) and (477.83,114.83) .. (479.5,116.5) -- (483,116.5) -- (483,116.5) ;
\draw [shift={(483,116.5)}, rotate = 0] [color={rgb, 255:red, 208; green, 2; blue, 27 }  ,draw opacity=1 ][fill={rgb, 255:red, 208; green, 2; blue, 27 }  ,fill opacity=1 ][line width=0.75]      (0, 0) circle [x radius= 3.35, y radius= 3.35]   ;
\draw [shift={(444.5,116.5)}, rotate = 0] [color={rgb, 255:red, 208; green, 2; blue, 27 }  ,draw opacity=1 ][fill={rgb, 255:red, 208; green, 2; blue, 27 }  ,fill opacity=1 ][line width=0.75]      (0, 0) circle [x radius= 3.35, y radius= 3.35]   ;
\draw [color={rgb, 255:red, 208; green, 2; blue, 27 }  ,draw opacity=1 ]   (444.5,85.5) .. controls (446.17,83.83) and (447.83,83.83) .. (449.5,85.5) .. controls (451.17,87.17) and (452.83,87.17) .. (454.5,85.5) .. controls (456.17,83.83) and (457.83,83.83) .. (459.5,85.5) .. controls (461.17,87.17) and (462.83,87.17) .. (464.5,85.5) .. controls (466.17,83.83) and (467.83,83.83) .. (469.5,85.5) .. controls (471.17,87.17) and (472.83,87.17) .. (474.5,85.5) .. controls (476.17,83.83) and (477.83,83.83) .. (479.5,85.5) -- (483,85.5) -- (483,85.5) ;
\draw [shift={(483,85.5)}, rotate = 0] [color={rgb, 255:red, 208; green, 2; blue, 27 }  ,draw opacity=1 ][fill={rgb, 255:red, 208; green, 2; blue, 27 }  ,fill opacity=1 ][line width=0.75]      (0, 0) circle [x radius= 3.35, y radius= 3.35]   ;
\draw [shift={(444.5,85.5)}, rotate = 0] [color={rgb, 255:red, 208; green, 2; blue, 27 }  ,draw opacity=1 ][fill={rgb, 255:red, 208; green, 2; blue, 27 }  ,fill opacity=1 ][line width=0.75]      (0, 0) circle [x radius= 3.35, y radius= 3.35]   ;
\draw [color={rgb, 255:red, 208; green, 2; blue, 27 }  ,draw opacity=1 ]   (444.5,54.5) .. controls (446.17,52.83) and (447.83,52.83) .. (449.5,54.5) .. controls (451.17,56.17) and (452.83,56.17) .. (454.5,54.5) .. controls (456.17,52.83) and (457.83,52.83) .. (459.5,54.5) .. controls (461.17,56.17) and (462.83,56.17) .. (464.5,54.5) .. controls (466.17,52.83) and (467.83,52.83) .. (469.5,54.5) .. controls (471.17,56.17) and (472.83,56.17) .. (474.5,54.5) .. controls (476.17,52.83) and (477.83,52.83) .. (479.5,54.5) -- (483,54.5) -- (483,54.5) ;
\draw [shift={(483,54.5)}, rotate = 0] [color={rgb, 255:red, 208; green, 2; blue, 27 }  ,draw opacity=1 ][fill={rgb, 255:red, 208; green, 2; blue, 27 }  ,fill opacity=1 ][line width=0.75]      (0, 0) circle [x radius= 3.35, y radius= 3.35]   ;
\draw [shift={(444.5,54.5)}, rotate = 0] [color={rgb, 255:red, 208; green, 2; blue, 27 }  ,draw opacity=1 ][fill={rgb, 255:red, 208; green, 2; blue, 27 }  ,fill opacity=1 ][line width=0.75]      (0, 0) circle [x radius= 3.35, y radius= 3.35]   ;
\draw   (385.5,50.1) .. controls (385.5,43.58) and (390.78,38.3) .. (397.3,38.3) -- (432.7,38.3) .. controls (439.22,38.3) and (444.5,43.58) .. (444.5,50.1) -- (444.5,148.5) .. controls (444.5,155.02) and (439.22,160.3) .. (432.7,160.3) -- (397.3,160.3) .. controls (390.78,160.3) and (385.5,155.02) .. (385.5,148.5) -- cycle ;
\draw    (483,148.5) ;
\draw [shift={(483,148.5)}, rotate = 0] [color={rgb, 255:red, 0; green, 0; blue, 0 }  ][fill={rgb, 255:red, 0; green, 0; blue, 0 }  ][line width=0.75]      (0, 0) circle [x radius= 3.35, y radius= 3.35]   ;
\draw [shift={(483,148.5)}, rotate = 0] [color={rgb, 255:red, 0; green, 0; blue, 0 }  ][fill={rgb, 255:red, 0; green, 0; blue, 0 }  ][line width=0.75]      (0, 0) circle [x radius= 3.35, y radius= 3.35]   ;
\draw [color={rgb, 255:red, 208; green, 2; blue, 27 }  ,draw opacity=1 ]   (444,192) .. controls (442.35,190.31) and (442.37,188.65) .. (444.06,187) .. controls (445.74,185.35) and (445.76,183.68) .. (444.11,182) .. controls (442.46,180.31) and (442.48,178.65) .. (444.17,177) .. controls (445.86,175.35) and (445.88,173.69) .. (444.23,172) .. controls (442.58,170.31) and (442.6,168.65) .. (444.29,167) .. controls (445.97,165.35) and (445.99,163.68) .. (444.34,162) .. controls (442.69,160.31) and (442.71,158.65) .. (444.4,157) .. controls (446.09,155.35) and (446.11,153.69) .. (444.46,152) -- (444.5,148.5) -- (444.5,148.5) ;
\draw [shift={(444.5,148.5)}, rotate = 0] [color={rgb, 255:red, 208; green, 2; blue, 27 }  ,draw opacity=1 ][fill={rgb, 255:red, 208; green, 2; blue, 27 }  ,fill opacity=1 ][line width=0.75]      (0, 0) circle [x radius= 3.35, y radius= 3.35]   ;
\draw [shift={(444,192)}, rotate = 270.66] [color={rgb, 255:red, 208; green, 2; blue, 27 }  ,draw opacity=1 ][fill={rgb, 255:red, 208; green, 2; blue, 27 }  ,fill opacity=1 ][line width=0.75]      (0, 0) circle [x radius= 3.35, y radius= 3.35]   ;
\draw    (482.5,192) -- (483,148.5) ;
\draw [shift={(483,148.5)}, rotate = 0] [color={rgb, 255:red, 0; green, 0; blue, 0 }  ][fill={rgb, 255:red, 0; green, 0; blue, 0 }  ][line width=0.75]      (0, 0) circle [x radius= 3.35, y radius= 3.35]   ;
\draw [shift={(482.5,192)}, rotate = 270.66] [color={rgb, 255:red, 0; green, 0; blue, 0 }  ][fill={rgb, 255:red, 0; green, 0; blue, 0 }  ][line width=0.75]      (0, 0) circle [x radius= 3.35, y radius= 3.35]   ;
\draw   (311,204.2) .. controls (311,197.46) and (316.46,192) .. (323.2,192) -- (625.8,192) .. controls (632.54,192) and (638,197.46) .. (638,204.2) -- (638,240.8) .. controls (638,247.54) and (632.54,253) .. (625.8,253) -- (323.2,253) .. controls (316.46,253) and (311,247.54) .. (311,240.8) -- cycle ;
\draw   (252,132.75) -- (274.2,132.75) -- (274.2,127) -- (289,138.5) -- (274.2,150) -- (274.2,144.25) -- (252,144.25) -- cycle ;

\draw (37,58.4) node [anchor=north west][inner sep=0.75pt]    {$G$};
\draw (110.5,196.9) node [anchor=north west][inner sep=0.75pt]    {$u$};
\draw (136,196.4) node [anchor=north west][inner sep=0.75pt]    {$v$};
\draw (70,141.4) node [anchor=north west][inner sep=0.75pt]    {$A$};
\draw (172,140.4) node [anchor=north west][inner sep=0.75pt]    {$B$};
\draw (346,13.4) node [anchor=north west][inner sep=0.75pt]    {$G_{0} =G-\{uv\}$};
\draw (446.5,151.9) node [anchor=north west][inner sep=0.75pt]    {$u$};
\draw (472,151.4) node [anchor=north west][inner sep=0.75pt]    {$v$};
\draw (406,96.4) node [anchor=north west][inner sep=0.75pt]    {$A$};
\draw (508,95.4) node [anchor=north west][inner sep=0.75pt]    {$B$};
\draw (435,218.4) node [anchor=north west][inner sep=0.75pt]    {$H-G_{0}$};
\draw (427,164.4) node [anchor=north west][inner sep=0.75pt]    {$f$};
\draw (120,73.4) node [anchor=north west][inner sep=0.75pt]  [color={rgb, 255:red, 208; green, 2; blue, 27 }  ,opacity=1 ]  {$F'$};
\draw (460,32.4) node [anchor=north west][inner sep=0.75pt]  [color={rgb, 255:red, 208; green, 2; blue, 27 }  ,opacity=1 ]  {$F$};

\end{tikzpicture}
         \caption{The cut $F'$ in $G$ induces a new cut $F$ in $H$ intersecting the $2$-cut $S$.
         }
         \label{fig10}
     \end{figure}
     \end{proof}

As an application, we obtain that the Fan-Raspaud conjecture for infinite graphs implies the conjecture of M\'{a}\v{c}ajov\'{a} and \v{S}koviera.

\begin{cor}\label{cor1}
    If the Fan-Raspaud conjecture holds for infinite graphs, then so does the conjecture of M\'{a}\v{c}ajov\'{a} and \v{S}koviera.
   
\end{cor}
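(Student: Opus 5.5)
The plan is to chain the three equivalences/implications already available rather than argue directly on infinite cuts. Assume the Fan-Raspaud Conjecture (Conjecture~\ref{conjecture2}) holds for all infinite, cubic, bridgeless graphs. By the ``infinite implies finite'' direction of Theorem~\ref{thm3}, which rests on the construction of Lemma~\ref{lemma3}, Conjecture~\ref{conjecture2} then holds for every finite, cubic, bridgeless graph. Concretely, for a finite $G$ we build the double-ladder graph $H$ of Lemma~\ref{lemma3}. We take a $(3,2)$-PM cover of $H$ and push it back to a $(3,2)$-PM cover of $G$.

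Next, we invoke the known finite implication of \cite{perfectmatchingswithrestricted,MACAJOVA2005112}: for finite bridgeless cubic graphs, Fan-Raspaud implies Conjecture~\ref{conjecture3}. Hence every finite, cubic, bridgeless graph has two perfect matchings whose intersection contains no odd edge-cut. Then we apply the ``finite implies infinite'' direction of Theorem~\ref{thm4}. Given an infinite, cubic, bridgeless graph $G$, each component is handled separately. We approximate a component by the graphs $H_i$ of Lemma~\ref{thm1} and choose matchings $M_i^1,M_i^2$ with no odd cut in $M_i^1\cap M_i^2$. The $F$-limits $M^1,M^2$ are perfect matchings by Lemma~\ref{lemma2}, and Theorem~\ref{thm4} guarantees that $M^1\cap M^2$ contains no odd cut. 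This yields Conjecture~\ref{conjecture3} for infinite graphs.

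Two points need a check rather than a new argument. The first is the reduction to connected graphs. Components of a cubic bridgeless graph may be finite or infinite, and we treat each with the appropriate (finite or infinite) version. A cut of the whole graph that lies inside $M^1\cap M^2$ restricts to cuts of the components, and an odd total forces an odd restricted cut in some component. The second, and the main potential obstacle, is making sure the finite implication from the literature is quoted in the form needed: any three perfect matchings with empty common intersection already give two of them whose intersection contains no odd cut. Since all of this happens in finite graphs, the cited parity arguments apply verbatim. The infinite difficulties mentioned in the introduction are thereby absorbed entirely into Theorems~\ref{thm3} and~\ref{thm4}.
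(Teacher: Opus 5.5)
Your proposal is correct and follows the paper's proof exactly: it chains Theorem~\ref{thm3} (infinite $\Rightarrow$ finite Fan-Raspaud), the known finite implication of \cite{perfectmatchingswithrestricted,MACAJOVA2005112}, and the finite-to-infinite direction of Theorem~\ref{thm4} for Conjecture~\ref{conjecture3}. Your two side checks are sound and only make explicit what the paper leaves implicit. The componentwise reduction works as you describe. For the second check, in a finite cubic graph every odd cut meets every perfect matching in an odd number of edges, so an odd cut inside $M_1\cap M_2$ would meet $M_3$ and hence $M_1\cap M_2\cap M_3$; thus in fact every pair from a Fan-Raspaud triple works.
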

\begin{proof}
    If Conjecture \ref{conjecture2} holds for infinite graphs, then by Theorem \ref{thm3}, Conjecture \ref{conjecture2} holds for finite graphs. Conjecture \ref{conjecture2} for finite graphs, in turn, implies Conjecture \ref{conjecture3} for finite graphs. By Theorem \ref{thm4}, Conjecture \ref{conjecture3} for finite graphs implies Conjecture \ref{conjecture3} for infinite graphs. Combining all these implications yields the desired result.
    
\end{proof}

To conclude the paper, we present as a corollary that Conjecture~\ref{conjecture4} holds for infinite graphs, just as it does for finite graphs, which was proved in~\cite{Disjointoddcircuitsinabridgeless}.

\begin{cor}\label{thm6}
     Every infinite, cubic, bridgeless graph $G$ has two perfect matching $M_1$, $M_2$ so that $G-(M_1\cup M_2)$ is bipartite.

\end{cor}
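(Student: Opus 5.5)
The plan is a compactness argument modelled on the proof of Theorem~\ref{thm5}, using the theorem of Kardo\v{s}, M\'a\v{c}ajov\'a and Zerafa \cite{Disjointoddcircuitsinabridgeless} for the finite graphs $H_i$. First reduce to the connected case: if every component of $G$ has two perfect matchings whose removal leaves a bipartite graph, take the unions over components. Finite components are handled directly by \cite{Disjointoddcircuitsinabridgeless}. For an infinite component, which is countable because it is connected and locally finite, fix a non-principal ultrafilter $F$ and take the sequence $\langle H_i\rangle_{i\in\mathbb{N}}$ of finite, cubic, bridgeless graphs from Lemma~\ref{thm1}, whose $F$-limit is $G$. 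By \cite{Disjointoddcircuitsinabridgeless}, each $H_i$ has perfect matchings $M_i^1, M_i^2$ such that $H_i-(M_i^1\cup M_i^2)$ is bipartite. Let $M^j$ be the $F$-limit of $\langle M_i^j\rangle_{i\in\mathbb{N}}$. By Lemma~\ref{lemma2}, both $M^1$ and $M^2$ are perfect matchings of $G$.

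It remains to show that $G-(M^1\cup M^2)$ is bipartite. A graph, finite or infinite, is bipartite if and only if it has no odd cycle. Since cycles are finite, it suffices to rule out a single odd cycle $C$ in $G$ that avoids $M^1\cup M^2$. Suppose such a $C$ exists.

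\begin{itemize}
\item As in the proofs of Lemma~\ref{thm1} and Lemma~\ref{lemma2}, there is $n$ such that $E(C)\subset E(H_i)$ for every $i\ge n$, because each edge of $G$ eventually lies in $H_i$.
\item For each $e\in E(C)$ and $j\in\{1,2\}$, we have $e\notin M^j$, so $\{i : e\in M_i^j\}\notin F$. Since $F$ is an ultrafilter, its complement $\{i : e\notin M_i^j\}$ belongs to $F$.
\item $C$ has finitely many edges, so intersecting these finitely many sets with the cofinite set $\{i\ge n\}$ still gives a member of $F$, which is therefore nonempty.
\item For any $i$ in that set, $C$ is an odd cycle in $H_i-(M_i^1\cup M_i^2)$, contradicting the choice of $M_i^1, M_i^2$.
\end{itemize}

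The main point to handle carefully is the set-theoretic step: membership in the limit is defined by "many $n$", so non-membership must be converted into "many $n$ with non-membership". This uses maximality of $F$ through Proposition~\ref{ultrafilter}.

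One further check is needed: $C$ must be a cycle of $H_i$ itself. The edges of $C$ are genuine edges of $G$, and for large $i$ they are neither contracted nor subdivided, by the argument in the proof of Lemma~\ref{thm1}, so this holds. Unlike Theorem~\ref{thm4}, no cut-parity bookkeeping is needed, because the obstruction is a finite odd cycle and is witnessed locally.
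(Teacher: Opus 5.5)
Your proposal is correct and follows the same route as the paper: approximate $G$ by the finite graphs $H_i$ of Lemma~\ref{thm1}, apply the theorem of Kardo\v{s}, M\'a\v{c}ajov\'a and Zerafa to each $H_i$, take $F$-limits of the matchings via Lemma~\ref{lemma2}, and pass bipartiteness to the limit. Your odd-cycle/ultrafilter step is an explicit proof of the fact the paper only cites, namely that an $F$-limit of bipartite graphs (here the graphs $H_i-(M_i^1\cup M_i^2)$) is bipartite; your reduction to connected components also fills in a case the paper's one-line proof leaves implicit.
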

\begin{proof}
This follows from Lemmas~\ref{thm1} and \ref{lemma2}, combined with the fact that the $F$-limit of a sequence of bipartite graphs is also bipartite.

\end{proof}
\bibliography{referencias}
\bibliographystyle{plain}    
\Addresses 

\end{document}